\DeclareMathOperator*{\cheng}{\text{\LARGE $\times$}}
\DeclareMathOperator*{\supp}{\text{supp}}
\DeclareMathOperator*{\argmax}{\text{argmax}}
\DeclareMathOperator*{\argmin}{\text{argmin}}
\newtheorem{theorem}{Theorem}[section]
\newtheorem{lemma}[theorem]{Lemma}
\newtheorem{corollary}[theorem]{Corollary}
\theoremstyle{definition}
\newtheorem{definition}[theorem]{Definition}
\theoremstyle{remark}
\newtheorem{remark}[theorem]{Remark}
\numberwithin{equation}{section}
\begin{document}

\title[Bayesian approach for inverse scattering problem]
 {Infinite-dimensional Bayesian approach for inverse scattering problems of a fractional Helmholtz equation}

\author[J.X.Jia]{Junxiong Jia}
\address{School of Mathematics and Statistics,
Xi'an Jiaotong University,
 Xi'an
710049, China; BCMIIS}
\email{jjx323@xjtu.edu.cn}
\thanks{}

\author[S. Yue]{Shigang Yue}
\address{Lincoln School of Computer Science,
University of Lincoln,
Brayford Pool, Lincoln LN6 7TS,
United Kingdom }
\email{syue@lincoln.ac.uk}

\author[J. Peng]{Jigen Peng}
\address{School of Mathematics and Statistics,
Xi'an Jiaotong University,
 Xi'an
710049, China}
\email{jgpeng@xjtu.edu.cn}

\author[J. Gao]{Jinghuai Gao}
\address{School of Electronic and Information Engineering,
Xi'an Jiaotong University,
 Xi'an
710049, China}
\email{jhgao@xjtu.edu.cn}

\subjclass[2010]{86A22, 65M32}

\date{}

\keywords{Bayesian inverse method, Fractional Helmholtz equation, Inverse scattering problem, Fractional Laplace operator}

\begin{abstract}
In this paper, we focus on a new wave equation described wave propagation in the attenuation medium.
In the first part of this paper, based on the time-domain space fractional wave equation, we formulate the frequency-domain equation named as fractional Helmholtz equation.
According to the physical interpretations, this new model could be divided into two separate models: loss-dominated model and dispersion-dominated model. For the loss-dominated model (it is an integer- and fractional-order mixed elliptic equation), a well-posedness theory has been established and the Lipschitz continuity of the scattering field with respect to the scatterer has also been established.
Because the complexity of the dispersion-dominated model (it is an integer- and fractional-order mixed elliptic system),
we only provide a well-posedness result for sufficiently small wavenumber.
In the second part of this paper, we generalize the Bayesian inverse theory in infinite-dimension to allow a part of the noise depends on the target function (the function needs to be estimated). Then, we prove that the estimated function tends to be
the true function if both the model reduction error and the white noise vanish. 
At last, our theory has been applied to the loss-dominated model with absorbing boundary condition.
\end{abstract}

\maketitle

\section{Introduction}\label{introductionSection}

Attenuation effect is an important phenomenon when we consider the wave propagation in some attenuation medium.
Numerous physical models have been proposed \cite{dixue5,dixue3,dixue4} in previous studies.
If we write the Helmholtz equation on inhomogeneous medium as follows:
\begin{align}\label{classical}
\Delta u + k^{2} n(x) u = 0,
\end{align}
then we may consider
$n(x) = n_{1}(x) + i\frac{n_{2}(x)}{k}$
to incorporate the case of the absorbing medium \cite{InverseScatter1}.
Hence, the attenuation problem may be incorporated into the classical studies on Helmholtz equations (e.g., \cite{Bao1,Bao2,Yang1}).
However, the attenuation effect indeed incorporate two effects: amplitude loss and velocity dispersion.
The aforementioned model (\ref{classical}) seems to mix these two effects together, 
so we can not study these two effects separately.

Space fractional wave equations, which can separate the two effects incorporated in the attenuation effect, have been proposed.
Before revealing the form of this new fractional model, we would like to provide an introduction of the fractional time wave equation.
Based on the Caputo's fractional derivative \cite{Podlubny},
the isotropic stress-strain ($\sigma$-$\epsilon$) relation could be deduced in the following form \cite{Caputo1}:
\begin{align*}
\sigma = \frac{M_{0}}{t_{0}^{-2\gamma}}\frac{\partial^{2\gamma}\epsilon}{\partial t^{2\gamma}}.
\end{align*}
Then the following Caputo's wave equation has been established
\begin{align}\label{caputoequ}
\begin{split}
\frac{\partial^{2-2\gamma}}{\partial t^{2-2\gamma}}u = c^{2}\omega^{-2\gamma}\Delta u,
\end{split}
\end{align}
where $c^{2}(x) = c_{0}^{2}(x)\cos^{2}(\pi \gamma /2)$. Here $c_{0}$ is the sound velocity.
$\gamma$ is a function related to the quality factor $Q$, that is $\gamma \rightarrow 0$ as $Q\rightarrow \infty$
and $\gamma \rightarrow 1/2$ as $Q\rightarrow 0$.
Then Carcione et al \cite{Carcione1,Carcione2} successfully solved the fractional time wave equation
using the Grünwald-Letnikow and central-difference approximations for the time discretization and Fourier
method to compute the spatial derivative.
The time fractional wave equation describes the constant $Q$ attenuation ($Q$ is constant in the frequency domain) precisely; however,
it is hard to solve and it also mixes amplitude loss effect and velocity dispersion effect together.

Based on the Caputo's wave equation (\ref{caputoequ}), after some intricate calculations in the angular and space frequency domain,
Zhu, Carcione, and Harris \cite{Zhu1,Zhu2} proposed the following space fractional model:
\begin{align}\label{fractionalFull}
\begin{split}
\frac{1}{c(x)^{2}}\frac{\partial^{2}}{\partial t^{2}}u = -\eta(x)(-\Delta)^{\gamma(x) + 1}u - \tau(x)\frac{\partial}{\partial t}(-\Delta)^{\gamma(x)+1/2}u,
\end{split}
\end{align}
with coefficients varying in space as follows:
\begin{align}\label{1coeff}
\begin{split}
\eta(x) = c_{0}(x)^{2\gamma(x)}\omega_{0}^{-2\gamma(x)}\cos (\pi\gamma(x)), \quad
\tau(x) = c_{0}(x)^{2\gamma(x)}\omega_{0}^{-2\gamma(x)}\sin(\pi\gamma(x)).
\end{split}
\end{align}
Here let us provide some explanations for the notations used in (\ref{fractionalFull}) and (\ref{1coeff}).
$\omega_{0}$ denotes a reference frequency, $c_{0}(x)$ denotes the phase velocity, and
$c(x)$ represents the space acoustic velocity.
The fractional power $\gamma(x)$ relates to the quality factor, as follows:
\begin{align}\label{1power}
\gamma(x) = \frac{1}{\pi}\arctan\left(\frac{1}{Q(x)}\right).
\end{align}
Obviously, we have $0<\gamma(x)<\frac{1}{2}$.
This model can be simplified as two separate models, namely, dispersion-dominated wave equation and
amplitude loss-dominated wave equation.
The dispersion-dominated wave equation has the following form:
\begin{align}\label{1dispersion}
\frac{1}{c^{2}}\frac{\partial^{2}}{\partial t^{2}}u = -\eta (-\Delta)^{\gamma + 1}u.
\end{align}
The amplitude loss-dominated wave equation has the following form
\begin{align}\label{1amplitude}
\frac{1}{c^{2}}\frac{\partial^{2}}{\partial t^{2}}u = \Delta u - \tau \frac{\partial}{\partial t}(-\Delta)^{\gamma + 1/2}u.
\end{align}
Hence, this space fractional model clearly has two advantages:
first, it can be solved quickly by spectral methods \cite{Zhu3} or other numerical methods;
second, it separates the dispersion effect and the amplitude loss effect, so researchers are able to analyze these two parts separately and
obtain a complete understanding of the attenuation effect.

Then, let us consider the time-harmonic solution of equation (\ref{fractionalFull}).
As usual, assuming the solution has the form
$e^{-i\omega t}u(x),$
then we derive an equation that could be called the fractional Helmholtz equation, as follows:
\begin{align}\label{1fullhelmholtz}
-\eta (-\Delta)^{\gamma + 1}u + i\omega\tau(-\Delta)^{\gamma+1/2}u + k^{2}(1+q(x))u = 0,
\end{align}
where $\omega$ denotes the angular frequency, $k$ represents the wavenumber, and $q(\cdot)$ is a function assumed to be larger than $-1$.
Equation (\ref{1fullhelmholtz}) could also be separated into two models: the loss-dominated model and the dispersion-dominated model.
More specifically, the loss-dominated fractional Helmholtz equation can be derived from equation (\ref{1amplitude}) as follow:
\begin{align}\label{1.1model}
\Delta u + i\omega\tau (-\Delta)^{\gamma+1/2}u + k^{2}(1+q(x))u = 0.
\end{align}
The dispersion-dominated fractional Helmholtz equation can be derived from equation (\ref{1dispersion}) as follows:
\begin{align}\label{1.1helmholtzEquation}
(-\Delta)^{\gamma+1}u - k^{2}(1+q)u = 0.
\end{align}

In this paper, under suitable assumptions stated in Assumption 1 in Section \ref{2forwardsection},
a well-posedness theory with general wavenumber $k>0$ has been constructed for equation (\ref{1.1model}).
For equation (\ref{1.1helmholtzEquation}), the problem seems to be difficult.
We can only constuct a unique solution for sufficiently small $k>0$.
Because the studies about fractional Laplace operator (a representation of non-local operators)
are a rather new topic in the field of elliptic partial differential equations,
the theories of this operator are little compared with the traditional second order elliptic operator.
Considering the difficulties brought by the fractional Laplace operator, our new results are non-trivial generalizations
of the results about second-order Helmholtz equation (\ref{classical}).

At this stage, the forward models proposed in this paper are clear, however, we are not content with this.
In the second part of this paper, we attempt to construct a Bayesian inverse theory for an inverse scattering problem
related to the fractional Helmholtz equation.

Now let us recall some basic developments in the Bayesian inverse theory.
Generally speaking, there are typically two philosophies in Bayesian inverse theory.
One philosophy involves discretizing the forward problem and then using Bayesian methodology to a finite-dimensional problem
(``discrete first, inverse second''[DFIS]).
Kaipio and Somersalo \cite{DisBayesian1} provide an excellent introduction for the DFIS method, especially large
inverse problems arising in differential equations.
The other philosophy involves constructing Bayesian inverse theory
in infinite-dimensional space, in which discretization of the continuous problem is postponed
to the final step (``inverse first, discrete second''[IFDS]).
The IFDS method could be dating back to 1970,
Franklin \cite{BayesianEarly} formulated PDE's inverse problems in terms of Bayes' formula on some Hilbert space.
Recently, Lasanen \cite{lasanen1,lasanen2,lasanen3,lasanen4} developed fully nonlinear theory.
Cotter, Dashti, Robinson, Stuart, Law, and Voss \cite{Bayesian2,MAP_detail,Bayesian3}
established a mathematical framework for a range of inverse problems for functions, given noisy observations.
They revealed the relationship between regularization techniques and the Bayesian framework.
In addition, the error of the finite-dimensional approximate solutions has been estimated.

In this study, we employ the IFDS method and construct the Bayesian theory of the inverse scattering problem.
Let $X,Y$ be separable Hilbert space, equipped with the Borel $\sigma$-algebra,
and $\mathcal{G}:X\rightarrow Y$ be a measurable mapping. Then, the inverse problem can be sought of as finding $x$ from $y$ where
\begin{align}\label{3.1jiaformula}
y = \mathcal{G}(x) + \eta,
\end{align}
and $\eta \in Y$ denotes noise.
An important assumption in the literature \cite{Bayesian2,MAP_detail,Bayesian3} is that the noise $\eta$ is independent of $x$.
However, in previous studies on inverse scattering problems, some model reduction errors may be brought into the forward problem
(e.g., the absorbing boundary condition has been employed in \cite{Bao2}).
By denoting the model reduction error as $\epsilon$,
we could reformulate equation (\ref{3.1jiaformula}) as follows:
\begin{align}\label{3.2jiaapproModel}
y = \mathcal{G}_{a}(x) + \epsilon + \eta
\end{align}
with $\mathcal{G}_{a}:X\rightarrow Y$ being a measurable mapping.
The error $\epsilon$ usually depends on $x$; thus, we need to generalize Bayesian inverse theory in infinite-dimensional space
to incorporate this situation.

From the principles of DFIS, a Bayesian approximation error approach is developed \cite{DisBayesian1,Bayesian8,Bayesian9},
which can be used to handle model approximate errors (not independent with the above mentioned variable $x$)
produced by some finite-dimensional approximations.
Acceptable inversion results can be obtained by this method with only a rough approximate forward solver,
so that it seems to be a promising method for inverse scattering problems.
However, there seems no special infinite-dimensional Bayesian inverse theory
for the model reduction error induced by the hypothesis of constructing the mathematical models, e.g.,
the error induced by some absorbing boundary conditions.

On the basis of the aforementioned considerations and the requirements for analyzing inverse scattering problems,
we modify the theory presented in \cite{Bayesian2,MAP_detail,Bayesian1,Bayesian3} to allow a part of the noise to depend on the state variable $x$.
Then, we prove that the estimated function tends to be the true function when both the model reduction error $\epsilon$ and the white noise $\eta$ vanish under a simple setting. Finally, we apply the theory to an inverse scattering problem related to equation (\ref{1.1model}).
In summary, the contributions of our work are as follows:
\begin{itemize}
  \item The well-posedness is obtained for a scattering problem related to the loss-dominated fractional Helmholtz equation.
  Based on the well-posedness result, the Lipschitz continuity of the forward map is obtained,
  which is useful for analyzing inverse scattering problems.
  \item A generalized infinite-dimensional Bayesian inverse method is developed,
  which can be called infinite-dimensional Bayesian model error method.
  In addition, its relationship with regularization methods is discussed.
  If both the model reduction error and the white noise vanish,
  it is proved that the estimated function tends to be the true function.
\end{itemize}

The contents of this paper are organized as follows.
In Section \ref{p2Pre}, notations are introduced and some basic knowledge of the fractional Laplace operator is presented.
In Section \ref{2forwardsection}, we construct the
well-posedness theory for the scattering field equation related to the loss-dominated equation firstly.
Secondly, we construct the well-posedness theory for a scattering field equation related to the dispersion-dominated equation with
sufficiently small wavenumber.
In Section \ref{3inverse}, we first derive the well-posedenss of the posterior measure when some model reduction errors are considered.
Then, we prove that the estimated solution tends to be the true function if both the model error and the white noise vanish.
In the last part of this section, the general theory has been used to an inverse scattering problem
related to the loss-dominated fractional Helmholtz equation.
In Section 5, we provide a short summary and propose a few further questions.

\section{Preliminaries}\label{p2Pre}

\subsection{Notations}

In this section, we provide an explanation of the notations used in the rest of this paper.
\begin{itemize}
  \item Let $n \in \mathbb{N}$ be an integer, and
  $\mathbb{R}^{n}$ denotes $n$-dimensional Euclidean space; as usual, $\mathbb{R}$ means $\mathbb{R}^{1}$.
  \item $\Gamma(\cdot)$ denotes the usual Gamma function, and the reader may find a good introduction in \cite{Podlubny}.
  \item For $s\in \mathbb{R}, p\in [1,\infty)$, and a bounded domain $D \subset \mathbb{R}^{n}$,
  $W^{s,p}(D)$ denotes the Sobolev space, which roughly means that the $s$ order weak derivative
  of a function belongs to the space $L^{p}(D)$. For brevity, we usually denote $W^{s,p}(D)$ as $H^{s}(D)$.
  \item The notation $\dot{H}^{s}(D)$ stands for the homogeneous Sobolev space \cite{Function1,Function2} which used widely in the
  PDE community.
  \item Let $D \subset \mathbb{R}^{n}$ be a bounded domain, then $C(D)$ denotes continuous functions and
  $C_{u}(D)$ denotes uniformly bounded continuous functions.
  \item $C$ usually denotes a general constant and may be different from line to line.
  \item We let $\mathcal{H}$ denotes a Hilbert space and $L^{+}(\mathcal{H})$ denotes the set of all symmetric, positive operators.
  $L_{1}^{+}(\mathcal{H})$ denotes the operators of trace class and belong to $L^{+}(\mathcal{H})$.
  \item Let $\mathcal{H}$ be a Hilbert space, for an operator $\mathcal{C} \in L_{1}^{+}(\mathcal{H})$,
  $\mathcal{N}(a,\mathcal{C})$ denotes a Gaussian measure on $\mathcal{H}$ with the mean $a\in \mathcal{H}$
  and the covariance \mbox{operator $\mathcal{C}$.}
  \item We let $\eta$ and $\epsilon$ be two random variables, with $\eta \perp \epsilon$ indicating that the two random variables are independent.
\end{itemize}


\subsection{Fractional Laplace operator}\label{p2FractionalSection}
In this part, we provide an elementary introduction to the fractional Laplace operator which used through all of this paper.
Let $0 < \alpha < 1$ and set
\begin{align*}
\mathcal{L}^{1}(\mathbb{R}^{n}) := \left\{ u \, :\, \mathbb{R}^{n} \rightarrow \mathbb{R} \,\, \text{measurable, }\,
\int_{\mathbb{R}^{n}} \frac{|u(x)|}{(1+|x|)^{n+2\alpha}}dx < \infty \right\}.
\end{align*}
For $u \in \mathcal{L}^{1}(\mathbb{R}^{n}),\, x\in \mathbb{R}^{n}$ and $\epsilon > 0$, we write
\begin{align*}
(-\Delta)_{\epsilon}^{\alpha}u(x) = C_{n,\alpha}\int_{\{ y\in \mathbb{R}^{n}, \, |y-x| > \epsilon \}}
\frac{u(x) - u(y)}{|x-y|^{n+2\alpha}} dy
\end{align*}
with
\begin{align}\label{p2constantnum}
C_{n,\alpha} = \frac{\alpha 2^{2\alpha}\Gamma(\frac{n+2\alpha}{2})}{\pi^{\frac{n}{2}}\Gamma(1-\alpha)},
\end{align}
where $\Gamma$ denotes the usual Gamma function.
The fractional Laplacian $(-\Delta)^{\alpha}u$ of the function $u$ is defined by the formula
\begin{align}\label{p2defination1}
(-\Delta)^{\alpha}u(x) = C_{n,\alpha} \text{P.V.} \, \int_{\mathbb{R}^{n}}\frac{u(x)-u(y)}{|x-y|^{n+2\alpha}}dy
= \lim_{\epsilon \downarrow 0} (-\Delta)^{\alpha}_{\epsilon}u(x), \quad x\in \mathbb{R}^{n}
\end{align}
provided that the limit exists \cite{Silvestre1}.
Except this definition, one can also define $(-\Delta)^{\alpha}$ by using the method of bilinear Dirichlet forms \cite{Guan1},
that is, $(-\Delta)^{\alpha}$ is the closed selfadjoint operator on $L^{2}(\mathbb{R}^{n})$ associated with the
bilinear symmetric closed form
\begin{align}\label{p2defination2}
\mathcal{E}(u,\varphi) = \frac{C_{n,\alpha}}{2} \int_{\mathbb{R}^{n}}\int_{\mathbb{R}^{n}}
\frac{(u(x)-u(y))(\varphi(x)-\varphi(y))}{|x-y|^{n+2\alpha}}\,dx dy, \quad u,\varphi \in H^{\alpha}(\mathbb{R}^{n}),
\end{align}
in the sense that
\begin{align*}
D((-\Delta)^{\alpha}) = \{ u \in H^{\alpha}(\mathbb{R}^{n}), \, (-\Delta)^{\alpha}u \in L^{2}(\mathbb{R}^{n}) \}
\end{align*}
and
\begin{align*}
\mathcal{E}(u,\varphi) = ((-\Delta)^{\alpha}u, \, \varphi) = \int_{\mathbb{R}^{n}}\varphi (-\Delta)^{\alpha}u \,dx,
\quad
\forall \, u\in D((-\Delta)^{\alpha}),\, \varphi \in H^{\alpha}(\mathbb{R}^{n}).
\end{align*}
Actually, there are at least ten equivalent definitions about fractional Laplace operator and
the equivalence has been proved in an interesting paper \cite{TenDef}.
Since in Section \ref{2forwardsection}, we may need to face fractional elliptic equations in bounded domain, here,
we present the definition of regional fractional Laplacian \cite{Guan2}. Let $\Omega$ be a bounded domain in $\mathbb{R}^{n}$,
denote by $\mathcal{L}^{1}(\Omega)$ all the measurable function $u$ on $\Omega$ such that
$\int_{\Omega}\frac{|u(x)|}{(1+|x|)^{n+2\alpha}}dx < \infty$.
For $u \in \mathcal{L}^{1}(\Omega)$, $x \in \Omega$ and $\epsilon > 0$, we write
\begin{align}\label{p2definitionRegional}
A_{\Omega,\epsilon}^{\alpha}u(x) = C_{n,\alpha}\int_{y\in \Omega, |y-x|>\epsilon}
\frac{u(y) - y(x)}{|x-y|^{n+2\alpha}}\,dy,
\end{align}
where $C_{n,\alpha}$ defined as in (\ref{p2constantnum}).
\begin{definition}\label{p2regionalLaplace}
Let $u \in \mathcal{L}^{1}(\Omega)$. The regional fractional Laplacian $A_{\Omega}^{\alpha}$ is defined by the formula
\begin{align}\label{p2regionalDefformula}
A_{\Omega}^{\alpha}u(x) = \lim_{\epsilon \downarrow 0}A_{\Omega,\epsilon}^{\alpha}u(x), \quad x\in \Omega,
\end{align}
provided the limit exists.
\end{definition}
When $\Omega = \mathbb{R}^{n}$, $A_{\mathbb{R}^{n}}^{\alpha}$ is the fractional power of Laplacian defined in (\ref{p2defination1}).
In order to give the Gauss-Green formula in the fractional Laplace operator setting, we give the following definition \cite{Guan2}.
\begin{definition}\label{p2fangxiangdao}
For $0 \leq s < 2$, $u \in C^{1}(\Omega)$ and $z\in \partial\Omega$, we define the operator $\mathcal{N}^{s}$ on $\partial \Omega$
by
\begin{align}\label{p2fangxiangdaoFormula}
\mathcal{N}^{s}u(z) := -\lim_{t\downarrow 0}\frac{du(z-t\mathbf{n}(z))}{dt}t^{s},
\end{align}
provided that the limit exists. Here, $\mathbf{n}(z)$ denotes the outward normal vector of $\partial\Omega$ at the point
$z \in \partial\Omega$.
\end{definition}
Let $\rho(x) := \text{dist}(x,\partial\Omega) = \inf\{ |y-x|\,:\, y \in \partial\Omega \}, \quad x\in \Omega$,
and for a real number $\delta > 0$, we set $\Omega_{\delta} := \{x\in \Omega,:\, 0< \rho(x) <\delta\}$.
Let $\beta > 0$ be a real number, define
\begin{align}\label{2pdefH}
\begin{split}
h_{\beta}(x) = \left\{
\begin{aligned}
& \rho(x)^{\beta - 1}\quad \forall \, x\in\Omega_{\delta}, \, \beta\in (0,1)\cup (1,\infty),    \\
& \ln (\rho(x)) \quad \forall\, x\in \Omega_{\delta}, \, \beta = 1.
\end{aligned}
\right.
\end{split}
\end{align}
For $1 < \beta \leq 2$, we define the space
\begin{align}\label{p2defCspace}
C_{\beta}^{2}(\bar{\Omega}) := \{
u \, : \, u(x) = f(x)h_{\beta}(x) + g(x), \quad \forall\, x\in\Omega\, \text{for some} \, f,g \in C^{2}(\bar{\Omega})
\}.
\end{align}
The above function space has many good properties, for us, we need to use the following property which has been proved in \cite{Warma1}.
\begin{lemma}\label{p2propertiesoffun}
Let $1/2 < \beta <1$ and $u \in C_{2\beta}^{2}(\bar{\Omega})$. Then $u \in H^{\beta}(\Omega)$ and $A_{\Omega}^{\beta}u \in L^{p}(\Omega)$
for every $p \in [1,\infty)$.
\end{lemma}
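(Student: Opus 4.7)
The plan is to reduce the statement to a half-space model problem, where one can verify both the local Sobolev regularity of the model function $x_n^{2\beta-1}$ and the fact that it is annihilated by the regional fractional Laplacian. First, use the decomposition $u = fh_{2\beta} + g$ provided by the definition of $C_{2\beta}^{2}(\bar\Omega)$: since $g \in C^{2}(\bar\Omega)$, it lies in $H^{\beta}(\Omega)$, and splitting the defining principal value integral of $A_{\Omega}^{\beta}g$ into near and far parts (using the Lipschitz estimate $|g(x) - g(y)| \lesssim |x - y|$) shows $A_{\Omega}^{\beta}g \in L^{\infty}(\Omega)$. Hence it suffices to establish both conclusions for $v := fh_{2\beta}$. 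A smooth cutoff localizes the analysis to the tubular neighborhood $\Omega_{\delta}$, and a finite partition of unity together with $C^{2}$-diffeomorphisms flattening $\partial\Omega$ reduces the problem chart-by-chart to the model function $w(x) = \chi(x)\tilde f(x) x_n^{2\beta-1}$ on $\mathbb{R}_+^{n}$ with $\chi \in C_c^{\infty}$ and $\tilde f \in C^{2}$.

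For the $H^{\beta}$ conclusion, it suffices to bound the Gagliardo seminorm of $x_n^{2\beta-1}$ on bounded subsets of $\mathbb{R}_+^{n}$; this is classical and finite precisely when $2\beta - 1 > \beta - 1/2$, i.e.\ exactly under the hypothesis $\beta > 1/2$. Summing over charts gives $u \in H^{\beta}(\Omega)$. For the $L^{p}$ bound on $A_{\Omega}^{\beta}u$, the crux is the identity $A_{\mathbb{R}_+^{n}}^{\beta}x_n^{2\beta-1} \equiv 0$ as a principal value. By translation invariance in $x'$ and the $(-2\beta)$-homogeneity of the operator,
\begin{align*}
A_{\mathbb{R}_+^{n}}^{\beta}x_n^{2\beta-1}(x) \,=\, C(\beta)\, x_n^{-1}, \qquad C(\beta) \,=\, c_{n,\beta}\,\mathrm{P.V.}\!\int_0^{\infty}\frac{1 - t^{2\beta-1}}{|1 - t|^{1+2\beta}}\,dt,
\end{align*}
and the involution $t \mapsto 1/t$ sends the integrand on $(1,\infty)$ to the negative of the integrand on $(0,1)$, forcing $C(\beta) = 0$.

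Transporting back to $\Omega$, write $A_{\Omega}^{\beta}v(x) = \tilde f(x_0)\,A_{\mathbb{R}_+^{n}}^{\beta}x_n^{2\beta-1}(\cdot) + R(x)$, where $x_0$ is the nearest boundary point to $x$ and $R$ collects corrections from the variation of $\tilde f$, the curvature of $\partial\Omega$, the cutoffs, and cross-chart interactions. The main term vanishes; the near part of $R$ (integration over $|x-y| < \rho(x)/2$) is controlled by a second-order Taylor expansion of $\tilde f$ at $x$, using that $2\beta - 1 \in (0,1)$ keeps the remaining singular integrand integrable, while the far part is bounded by $\|v\|_{L^{\infty}(\Omega)}$ times a tail integral of the kernel (with $v \in L^{\infty}$ being immediate since $2\beta - 1 > 0$). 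Together these yield $|R(x)| \le C\rho(x)^{-\sigma}$ for any $\sigma > 0$, hence $A_{\Omega}^{\beta}u \in L^{p}(\Omega)$ for every $p \in [1,\infty)$.

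The main obstacle I expect is controlling the correction term $R$ near $\partial\Omega$. The cancellation $C(\beta) = 0$ is a pointwise miracle for the exact model $x_n^{2\beta-1}$ on a flat half-space, but in the curved bounded domain with a non-constant prefactor $f(x)$ one inevitably picks up extra terms involving derivatives of $f$ and the geometry of $\partial\Omega$. Verifying that these contribute at worst a boundary singularity of the form $\rho(x)^{-\sigma}$ with $\sigma$ arbitrarily small, rather than the non-integrable $\rho(x)^{-1}$ that naive power counting would suggest, is the delicate analytic step of the argument.
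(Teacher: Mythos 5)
The paper itself gives no proof of this lemma --- it is quoted from \cite{Warma1} (going back to Guan's work on the regional fractional Laplacian) --- and your strategy (split $u=fh_{2\beta}+g$, flatten the boundary, exploit the P.V.\ cancellation $A_{\mathbb{R}^{n}_{+}}^{\beta}x_{n}^{2\beta-1}=0$, estimate the remainder) is indeed the skeleton of the argument in that literature; the involution computation giving $C(\beta)=0$ and the criterion $2\beta-1>\beta-1/2$ for the $H^{\beta}$ part are correct. The gaps are in the execution. First, the ``easy'' step for $g$ is wrong as written: a Lipschitz bound makes the near-field integrand of size $|x-y|^{1-n-2\beta}$, which is \emph{not} integrable near $y=x$ when $\beta>1/2$; one needs the full $C^{2}$ regularity plus the symmetric P.V.\ cancellation of the first-order term. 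Worse, even with that fix, $A_{\Omega}^{\beta}g\notin L^{\infty}(\Omega)$ in general: the symmetrization only operates in a ball of radius $\rho(x)$, and the uncompensated far part already gives, for $g(y)=y_{n}$ on a half-space,
\begin{align*}
A_{\mathbb{R}^{n}_{+}}^{\beta}\,y_{n}\,(x)\;=\;c\,\mathrm{P.V.}\!\int_{0}^{\infty}\frac{x_{n}-y_{n}}{|x_{n}-y_{n}|^{1+2\beta}}\,dy_{n}\;=\;-\,c\,\frac{x_{n}^{1-2\beta}}{2\beta-1},
\end{align*}
so whenever $\partial_{\mathbf{n}}g\neq 0$ on $\partial\Omega$ the $g$-contribution blows up like $\rho^{1-2\beta}$, which belongs to $L^{p}$ only for $p<1/(2\beta-1)$. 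Since $f\equiv 0$ is allowed, $C^{2}(\bar\Omega)\subset C^{2}_{2\beta}(\bar\Omega)$, so your piecewise treatment of $fh_{2\beta}$ and $g$ cannot deliver membership in $L^{p}$ for \emph{every} $p\in[1,\infty)$; at best it yields the restricted range $p<1/(2\beta-1)$, and this tension with the range quoted in the statement has to be confronted rather than glossed over.

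Second, the decisive bound $|R(x)|\le C\rho(x)^{-\sigma}$ for the curved-domain, variable-coefficient remainder is asserted, not proved --- and you yourself identify it as the delicate step. Two specific omissions: flattening by a $C^{2}$ diffeomorphism distorts the kernel, $|\Phi(x)-\Phi(y)|=|x-y|\bigl(1+O(|x-y|)\bigr)$, so after straightening you are no longer applying $A_{\mathbb{R}^{n}_{+}}^{\beta}$ exactly, and the induced error kernel of size $|x-y|^{1-n-2\beta}$ acting on $\rho^{2\beta-1}$-differences is of the same strength as the terms you are trying to cancel; and the near-field Taylor expansion must be carried out on the whole product $(f(\cdot)-f(x_{0}))\,\rho(\cdot)^{2\beta-1}$ (whose second derivatives are of order $\rho^{2\beta-2}$), not on $\tilde f$ alone, with the first-order term killed by P.V.\ symmetry. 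Likewise the far-field contribution of $\rho^{2\beta-1}$, which by naive dyadic counting is of order $\rho^{-1}$, is only reduced by explicit cancellation against the far part of the frozen model. As it stands the proposal is a correct plan with the right central cancellation, but the estimates that actually constitute the proof --- and that would also expose the issue with the stated range of $p$ --- are missing.
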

Having these preparations, now, we can state the following fractional Gauss-Green formula \cite{Guan2,Warma1}.
\begin{lemma}\label{p2GaussGreenTheorem}
Let $1/2 < \alpha < 1$ and let $A_{\Omega}^{\alpha}$ be the nonlocal operator defined in Definition \ref{p2regionalLaplace}.
Then, for every $u := fh_{2\alpha} + g \in C_{2\alpha}^{2}(\bar{\Omega})$ and $\varphi \in C_{2\alpha}^{2}(\bar{\Omega})$,
\begin{align}\label{p2GaussGreenFormula}
\begin{split}
\int_{\Omega}A_{\Omega}^{\alpha}u(x)\varphi(x)dx = &
\frac{1}{2}C_{n,\alpha}\int_{\Omega}\int_{\Omega}\frac{(u(x)-u(y))(\varphi(x)-\varphi(y))}{|x-y|^{n+2\alpha}}\,dxdy \\
& - B_{n,\alpha}\int_{\partial\Omega}\varphi\mathcal{N}^{2-2\alpha}u\,dS,
\end{split}
\end{align}
where $dS$ denotes the surface measure, $B_{n,\alpha}$ is a constant related to $C_{n,s}$ which can be found in \cite{Guan2}
or \cite{Warma1}.
\end{lemma}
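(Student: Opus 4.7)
My plan is to start from the singular-integral definition of $A_{\Omega,\epsilon}^{\alpha}$ and derive the formula by a symmetrization-plus-boundary-layer argument, letting $\epsilon\downarrow 0$ at the very end. Concretely, I would multiply $A_{\Omega,\epsilon}^{\alpha}u(x)$ by $\varphi(x)$, integrate over $\Omega$, and use Fubini to rewrite
\[
\int_{\Omega} A_{\Omega,\epsilon}^{\alpha}u(x)\varphi(x)\,dx = C_{n,\alpha}\iint_{\{(x,y)\in\Omega\times\Omega\,:\,|x-y|>\epsilon\}} \frac{(u(y)-u(x))\varphi(x)}{|x-y|^{n+2\alpha}}\,dx\,dy.
\]
Swapping the roles of $x$ and $y$ (which is legitimate because the truncation $|x-y|>\epsilon$ is symmetric), then averaging the two representations, would produce the symmetric double integrand $\tfrac12(u(x)-u(y))(\varphi(x)-\varphi(y))|x-y|^{-n-2\alpha}$ plus a correction. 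This is the standard route by which Dirichlet-form symmetrization (compare with \eqref{p2defination2}) emerges.

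The key technical point is that for $u\in C^{2}_{2\alpha}(\bar\Omega)$ with $1/2<\alpha<1$, the singularity of $u$ at $\partial\Omega$ is of order $\rho^{2\alpha-1}$, which is exactly the borderline that the symmetrized double integral can absorb. Lemma~\ref{p2propertiesoffun} applied with $\beta=\alpha$ guarantees $u,\varphi\in H^{\alpha}(\Omega)$ and $A_{\Omega}^{\alpha}u\in L^{p}(\Omega)$ for all finite $p$, so both sides of \eqref{p2GaussGreenFormula} make sense as absolutely convergent integrals, and the limit $\epsilon\downarrow 0$ in the symmetric double integral can be carried out by dominated convergence.

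The main obstacle is identifying the boundary term $-B_{n,\alpha}\int_{\partial\Omega}\varphi\,\mathcal{N}^{2-2\alpha}u\,dS$. To extract it, I would isolate a thin collar $\Omega_{\delta}$ and separate the contributions from the regular part $g$ (which is $C^{2}$ up to the boundary and produces no boundary term in the limit) and the singular part $fh_{2\alpha}$. On $\Omega_{\delta}$, using the local coordinates $x=z-t\mathbf{n}(z)$ with $z\in\partial\Omega$ and small $t>0$, the difference between the truncated integral $\int_{\Omega}A_{\Omega,\epsilon}^{\alpha}u\,\varphi$ and the corresponding symmetrized double integral can be computed explicitly: it is a boundary-layer residue, and the key calculation is to show that as $\epsilon\downarrow 0$ this residue converges to a surface integral of $\varphi$ against
\[
-\lim_{t\downarrow 0}\frac{d}{dt}u(z-t\mathbf{n}(z))\,t^{2-2\alpha} = \mathcal{N}^{2-2\alpha}u(z),
\]
times a constant $B_{n,\alpha}$ that comes from integrating the kernel $|x-y|^{-n-2\alpha}$ against the boundary geometry after a suitable rescaling $y=x+\epsilon\xi$. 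The value of $B_{n,\alpha}$ is fixed by this normalization computation (this is precisely the content of the Guan calculation to which the statement defers).

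The hardest part of this program is indeed the boundary-layer calculation: one must show that only the $fh_{2\alpha}$ component of $u$ contributes, that the limit $\mathcal{N}^{2-2\alpha}u$ is well-defined on $C^{2}_{2\alpha}(\bar\Omega)$, and that interchanging the surface integral with the $\epsilon$-limit is justified uniformly in $z\in\partial\Omega$. Once the constant $B_{n,\alpha}$ is pinned down by a model problem (e.g.\ a half-space with $u(x)=x_{n}^{2\alpha-1}\chi$ cut off smoothly), the general formula follows by combining this localized analysis with the Fubini-plus-symmetrization argument on $\Omega\setminus\Omega_{\delta}$ and sending $\delta\downarrow 0$.
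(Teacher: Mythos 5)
First, a point of comparison: the paper does not prove Lemma \ref{p2GaussGreenTheorem} at all --- it is quoted as a known result and the proof is delegated to \cite{Guan2} and \cite{Warma1}. Your outline does follow the general strategy of those references (Fubini plus symmetrization of the truncated kernel, then a collar analysis near $\partial\Omega$ to extract the boundary term), so the route is the right one; but as it stands it is a plan rather than a proof, and the gap sits exactly at the decisive step. At fixed $\epsilon$ the symmetrization over $\{(x,y)\in\Omega\times\Omega:|x-y|>\epsilon\}$ is \emph{exact} (the integrand is bounded there, Fubini applies, and swapping $x$ and $y$ costs nothing), so no ``correction'' appears at that stage, contrary to your phrasing. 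Consequently, if one could also pass to the limit on the left-hand side, i.e.\ if $\int_{\Omega}A_{\Omega,\epsilon}^{\alpha}u\,\varphi\,dx\to\int_{\Omega}A_{\Omega}^{\alpha}u\,\varphi\,dx$, the formula would come out \emph{without} any boundary term, which is false whenever $f\neq 0$ on $\partial\Omega$ (for $u\sim f\rho^{2\alpha-1}$ one computes $\mathcal{N}^{2-2\alpha}u=-(2\alpha-1)f\neq 0$). So the entire content of the lemma is the failure of the interchange of $\epsilon\downarrow 0$ with the $x$-integration on a boundary collar, and the identification of the resulting concentration as $-B_{n,\alpha}\int_{\partial\Omega}\varphi\,\mathcal{N}^{2-2\alpha}u\,dS$ with a specific constant. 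Your proposal names this as ``the hardest part'' and defers it to ``the Guan calculation''; that is precisely the part that has to be carried out (only the $fh_{2\alpha}$ component contributes, the flat half-space model determines $B_{n,\alpha}$, curvature corrections and uniformity in $z\in\partial\Omega$ must be controlled), so the argument is not complete.

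Two smaller issues. First, your sign bookkeeping does not close: starting from the paper's Definition \ref{p2regionalLaplace} as written (difference $u(y)-u(x)$, after correcting the obvious typo $y(x)$), the symmetrization yields $-\tfrac{1}{2}C_{n,\alpha}\iint(u(x)-u(y))(\varphi(x)-\varphi(y))|x-y|^{-n-2\alpha}dxdy$, i.e.\ the opposite sign to \eqref{p2GaussGreenFormula}; one must fix the convention (the definition should be read with $u(x)-u(y)$ so that $A_{\mathbb{R}^{n}}^{\alpha}$ agrees with \eqref{p2defination1}--\eqref{p2defination2}) before the identity can be asserted. Second, the appeal to Lemma \ref{p2propertiesoffun} with $\beta=\alpha$ is fine for giving $u,\varphi\in H^{\alpha}(\Omega)$ and $A_{\Omega}^{\alpha}u\in L^{p}(\Omega)$, and dominated convergence does handle the symmetric double integral; but note that the $L^{p}$ bound on $A_{\Omega}^{\alpha}u$ does not by itself let you pass to the limit in $\int_{\Omega}A_{\Omega,\epsilon}^{\alpha}u\,\varphi\,dx$ (the family $A_{\Omega,\epsilon}^{\alpha}u$ is not uniformly integrable near $\partial\Omega$) --- indeed it must not, or the boundary term would vanish. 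A correct write-up therefore has to split the $x$-integration into $\Omega\setminus\Omega_{\delta}$ (where the limit interchange is harmless) and the collar $\Omega_{\delta}$, and do the explicit residue computation there; until that computation is supplied, the lemma is not proved but only reduced to the statement in \cite{Guan2,Warma1}.
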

In the rest of this paper, $A_{\Omega}^{0}$ will be understood as the identity operator.

\section{Forward Problem}\label{2forwardsection}

In this section, we attempt to construct well-posedness theory for the loss-dominated fractional Helmholtz equation
and the dispersion-dominated fractional Helmholtz equation.
Before going further, let us make more specific assumptions about these two equations and the following assumptions
are valid in all of the rest parts.

\textbf{Assumption 1}:
\begin{enumerate}
  \item In order to make our presentation more concisely, without loss of generality, we may assume the space dimension $n = 2$.
  \item $q(\cdot)$ is assumed to be a bounded function and has compact support. Denote $B_{R}$ as a ball centered at the original, then there exist $R > 0$ such that
        $\text{supp} (q) \subset B_{R}$. In addition, we assume that there exists two constant $q_{\text{min}},q_{\text{max}}$ such that
        $-1 < q_{\text{min}} \leq q(\cdot) \leq q_{\text{max}} < \infty$.
  \item $\gamma$ is a piecewise constant function, and without loss of generality, in this paper we assume
        $\gamma(x) = \tilde{\gamma} 1_{\Omega}$, where $\Omega$ is a subset of $B_{R}$ ($\bar{\Omega} \subsetneqq \supp(q) \subsetneqq B_{R}$)
        and $\tilde{\gamma}$ is a constant in $[0,1/2]$.
  \item $\eta, \tau$ are assumed to be two non-negative piecewise constant functions related to $\gamma$.
        Let $\tilde{\eta}, \tilde{\tau}$ be to two positive constants,
        $\tau(x) = \tilde{\tau}$ if $\gamma(x)\neq 0$ and $\tau(x) = 0$ if $\gamma(x) = 0$.
        $\eta(x) = \tilde{\eta}$ if $\gamma(x) \neq 1/2$ and $\eta(x) = 0$ if $\gamma(x) = 1/2$.
\end{enumerate}
\begin{remark}
All the assumptions in Assumption 1 are based on the physical model. For example, if we assume $\gamma(x) = 0$ in (\ref{1coeff}),
then $\tau(x) = 0$ for $\sin(\pi \gamma(x)) = 0$ and if we assume $\gamma(x) = 1/2$ in (\ref{1coeff}),
then $\eta(x) = 0$ for $\cos(\pi \gamma(x)) = 0$.
\end{remark}

Figure \ref{ShiYiTu1} presents the assumptions stated in Assumption 1 for
the relation between the area with attenuate media $\Omega$,
the support of the scatterer $\text{supp}(q)$ and the circle with radius $R$ clearly.
\begin{figure}[htbp]
\centering
\includegraphics[width=0.5\textwidth]{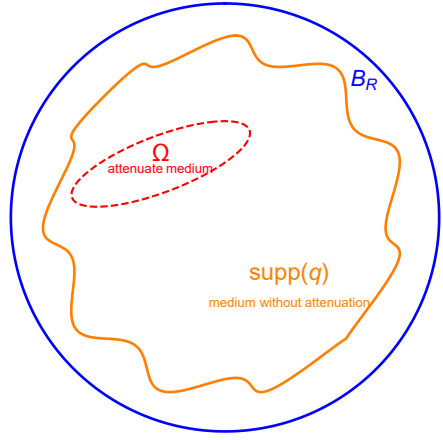}
\caption{The relation between the area with attenuate media $\Omega$,
the support of the scatterer $\text{supp}(q)$ and the circle with radius $R$.}\label{ShiYiTu1}
\end{figure}

Since one advantage of space fractional wave equation is that it can separate amplitude loss effect and dispersion effect,
we could study loss-dominated equation and dispersion-dominated equation separately.

\subsection{Loss-dominated model}\label{2.1lossmain}

In this subsection, we focus on the loss-dominated model. Based on the time-domain equation (\ref{1amplitude}),
we can easily derive the loss-dominated fractional Helmholtz equation as follows:
\begin{align}\label{2.1model}
\Delta u + i\omega\tau (-\Delta)^{\gamma+1/2}u + k^{2}(1+q(x))u = 0 \quad \text{in }\mathbb{R}^{2}.
\end{align}
As usual the scatterer is illuminated by a plane incident field
\begin{align}\label{2.1incident}
u^{\text{inc}}(x) = e^{i k x \cdot \mathbf{d}},
\end{align}
where $\mathbf{d} = (\cos(\theta), \sin(\theta)) \in \mathbb{S}^{1} = \{ x\in \mathbb{R}^{2}: |x| = 1 \}$ is
the incident direction and $\theta \in (0,2\pi)$ is the incident angle. Evidently, the incident field satisfies
\begin{align}\label{2.1incidentEquation}
\Delta u^{\text{inc}} + k^{2}u^{\text{inc}} = 0 \quad \text{in }\mathbb{R}^{2}.
\end{align}
Before going to set up the scattering problem, we need the following formula:
\begin{align}\label{2.1fractionalE}
(-\Delta)^{\alpha}e^{ikx\cdot\mathbf{d}} = k^{2\alpha}e^{ikx\cdot\mathbf{d}} \quad \text{with  }0<\alpha<1.
\end{align}

The total field $u$ consists of the incident field $u^{\text{inc}}$ and the scattered field $u^{s}$:
\begin{align}\label{2.1inands}
u = u^{\text{inc}} + u^{s}.
\end{align}
It follows form (\ref{2.1model}), (\ref{2.1incidentEquation}), (\ref{2.1inands}) and formula (\ref{2.1fractionalE})
that the scattered field satisfies
\begin{align}\label{2.1scatterEquation}
\Delta u^{s} + i\omega\tau(-\Delta)^{\gamma + 1/2}u^{s} + k^{2}(1+q(x))u^{s} =
(-k^{2}q(x) - i\omega \tau k^{2\gamma + 1})u^{\text{inc}}
\end{align}
in $\mathbb{R}^{2}$.
By our assumption, function $\gamma$ is zero outside $\Omega$ which is contained in a ball with radius $R$, so
the scattered field as usual should satisfy the Sommerfeld radiation condition:
\begin{align}\label{2.1radiation}
\lim_{r\rightarrow \infty} \sqrt{r}\left(\partial_{r}u^{s} - iku^{s}\right) = 0,
\end{align}
where $r = |x|$.

In the domain $\mathbb{R}^{2}\backslash\bar{\Omega}$, equation (\ref{2.1scatterEquation}) reduced to
\begin{align}\label{2.1tradi}
\Delta u^{s} + k^{2}u^{s} = -k^{2}qu^{\text{inc}}.
\end{align}
This is just the equation in the classical scattering theory, so we know that
the solution of equation (\ref{2.1scatterEquation}) in $\mathbb{R}^{2}\backslash\bar{B}_{R}$ can be written under the
polar coordinates as follows:
\begin{align}\label{2.1series}
u^{s}(r,\theta) = \sum_{n \in \mathbb{Z}} \frac{H^{(1)}_{n}(kr)}{H^{(1)}_{n}(kR)}\hat{u}^{s}_{n}e^{in\theta},
\end{align}
where $H^{(1)}_{n}$ is the Hankel function of the first kind with order $n$ and
\begin{align*}
\hat{u}^{s}_{n} = (2\pi)^{-1}\int_{0}^{2\pi} u^{s}(R,\theta) e^{-in\theta} d\theta.
\end{align*}

Let $\mathbb{B}:H^{1/2}(\partial B_{R}) \rightarrow H^{-1/2}(\partial B_{R})$ be the Dirichlet-to-Neumann (DtN) operator defined as
follows: for an $u^{s} \in H^{1/2}(\partial B_{R})$,
\begin{align}\label{2.1Boundary}
(\mathbb{B}u^{s})(R,\theta) = k \sum_{n \in \mathbb{Z}} \frac{H^{(1)'}_{n}(kR)}{H^{(1)}_{n}(kR)}\hat{u}^{s}_{n}e^{in\theta}.
\end{align}
Using the DtN operator, the solution in (\ref{2.1series}) satisfies the following transparent boundary condition
\begin{align}\label{2.1Neumann}
\partial_{\mathbf{n}}u^{s} = \mathbb{B}u^{s} \quad \text{on }\partial B_{R},
\end{align}
where $\mathbf{n}$ is the unit outward normal on $\partial B_{R}$.
Now the problem can be converted to bounded domain.
Since we consider the bounded domain problem, the fractional Laplace operator may need to be adapted to
the regional fractional Laplace operator introduced in Section \ref{p2FractionalSection}.
Remembering the Assumption 1, for clarity, we write the bounded elliptic problem as follow
\begin{align}\label{2boundedFraPro}
\left\{
\begin{aligned}
& \Delta u^{s} + i\omega\tau A_{\Omega}^{\tilde{\gamma}+1/2}u^{s} + k^{2}(1+q)u^{s} = (-k^{2}q-i\omega\tau k^{2\gamma+1})u^{\text{inc}} \quad \text{in }B_{R}   \\
& \partial_{\mathbf{n}}u^{s} = \mathbb{B}u^{s} \quad \text{on }\partial B_{R}.
\end{aligned}
\right.
\end{align}
Now, the key step is that how to set up the weak formulation of the above problem (\ref{2boundedFraPro}),
for a good formulation will make our analysis simple.
Since there is Laplace operator in equation (\ref{2boundedFraPro}), we may expect that the solution $u$
belongs to $H^{1}(B_{R})$. Hence, for $z \in \partial \Omega$, we may have
\begin{align}\label{2fracBan}
\mathcal{N}^{2-2(\tilde{\gamma}+1/2)}u(z) = \mathcal{N}^{1-2\tilde{\gamma}}u(z)
= \lim_{t\downarrow 0}\mathbf{n(z)}\cdot\nabla u(z - t\mathbf{n}(z)) t^{1-2\tilde{\gamma}} = 0.
\end{align}
Based on this consideration, $A_{\Omega}^{\tilde{\gamma}+1/2}$ may be more appropriately be
defined as an operator with fractional Neumann boundary condition.
Inspired by the method used in \cite{Guan1,Guan2,Guan3,Warma1},
similar to the bilinear closed form defined in (\ref{p2defination2}), we need to consider the bilinear
closed form $\mathcal{E}_{\Omega}$ with domain $D(\mathcal{E}_{\Omega}) = H^{\tilde{\gamma}+1/2}(\Omega)$
and given for $u,\varphi \in H^{\tilde{\gamma}+1/2}(\Omega)$ by
\begin{align}\label{2.1bilinearBound}
\mathcal{E}_{\Omega}^{N}(u,\varphi) = \frac{C_{2, \tilde{\gamma}+1/2}}{2}\int_{\Omega}\int_{\Omega}
\frac{(u(x)-u(y))(\varphi(x)-\varphi(y))}{|x-y|^{3+2\tilde{\gamma}}}\,dxdy.
\end{align}
Let $A_{L}$ be the closed linear operator associated with the closed elliptic form $\mathcal{E}_{\Omega}^{N}$ in the sense that
\begin{align}\label{2.1NeumannOperator}
\left\{
\begin{aligned}
& D(A_{L}) := \{ u \in H^{\tilde{\gamma}+1/2}(\Omega),\, \exists\, v\in L^{2}(\Omega),    \\
& \quad\quad\quad\quad\quad \mathcal{E}_{\Omega}^{N}(u,\varphi) = (v,\varphi),\quad \forall \,\varphi \in H^{\tilde{\gamma}+1/2}(\Omega) \} \\
& A_{L} u = v.
\end{aligned}
\right.
\end{align}
\begin{remark}
The operator $A_{L}$ could be considered as a realization of the operator $A_{\Omega}^{\tilde{\gamma}+1/2}$ on
$L^{2}(\Omega)$ with fractional Neumann type boundary condition $\mathcal{N}^{2-2(\tilde{\gamma}+1/2)}u = 0$
on $\partial\Omega$. More precisely, if $\Omega$ has a $C^{2}$ boundary, we have the following
\begin{align}\label{2.1explanationA}
D(A_{L}) \cup C_{2\tilde{\gamma}+1}^{2}(\bar{\Omega}) = \{ u\in C_{2\tilde{\gamma}+1}^{2}(\bar{\Omega}),\,
\mathcal{N}^{1-2\tilde{\gamma}}u(z) = 0 \,\, \text{on}\,\, \partial\Omega \}.
\end{align}
\begin{proof}
Let
\begin{align*}
\mathcal{W} = & \{ u \in H^{\tilde{\gamma}+1/2}(\Omega) \cup C_{2\tilde{\gamma}+1}^{2}(\bar{\Omega}), \,
A_{\Omega}^{\tilde{\gamma}+1/2}u \in L^{2}(\Omega)  \\
& \quad\quad\quad\quad\quad\quad\quad\quad\quad\quad \text{ and } \mathcal{N}^{1-2\tilde{\gamma}}u = 0 \text{ weakly on }
\partial\Omega \}.
\end{align*}
Let $u \in D(A) \cup C_{2\tilde{\gamma}+1}^{2}(\bar{\Omega})$. Then by definition, there exists a
function $v \in L^{2}(\Omega)$ such that $\mathcal{E}_{\Omega}^{N}(u,\varphi) = (v,\varphi)_{L^{2}(\Omega)}$
for every $\varphi \in H^{\tilde{\gamma}+1/2}(\Omega)$.
Using the fractional Gauss-Green type formula, we have that for every $\varphi \in H^{\tilde{\gamma}+1/2}(\Omega)$,
\begin{align}\label{2.1equivalenceDef}
\begin{split}
\mathcal{E}_{\Omega}^{N}(u,\varphi) & = \frac{C_{2,\tilde{\gamma}+1/2}}{2}
\int_{\Omega}\int_{\Omega}\frac{(u(x)-u(y))(\varphi(x)-\varphi(y))}{|x-y|^{3+2\tilde{\gamma}}}\,dxdy    \\
& = \int_{\Omega}\varphi A_{\Omega}^{\tilde{\gamma}+1/2}u \,dx + B_{2,\tilde{\gamma}+1/2}
\int_{\partial\Omega}\varphi \mathcal{N}^{1-2\tilde{\gamma}}u \,dS  \\
& = \int_{\Omega} v\varphi \,dx.
\end{split}
\end{align}
It follows form (\ref{2.1equivalenceDef}) that in particular, for every $\varphi \in C_{c}^{\infty}(\Omega)$
\begin{align*}
\mathcal{E}_{\Omega}^{N}(u,\varphi) = \int_{\Omega}\varphi A_{\Omega}^{\tilde{\gamma}+1/2}u \,dx
= \int_{\Omega} \varphi v \,dx.
\end{align*}
Hence,
$$A_{L}u := v = A_{\Omega}^{\tilde{\gamma}+1/2}u.$$
Because $v\in L^{2}(\Omega)$, we obtain that $A_{\Omega}^{\tilde{\gamma}+1/2} \in L^{2}(\Omega)$.
Using the fact that $v = A_{\Omega}^{\tilde{\gamma}+1/2}u$, we obtain from (\ref{2.1equivalenceDef}) again that
$\mathcal{N}^{1-2\tilde{\gamma}}u = 0$ on $\partial\Omega$.
We have shown that $u \in \mathcal{W}$ and $A_{L}u = A_{\Omega}^{\tilde{\gamma}+1/2}u$.

Conversely, let $u \in \mathcal{W}$ and set $v := A_{\Omega}^{\tilde{\gamma}+1/2}u \in L^{2}(\Omega)$.
Then for every $\varphi \in H^{\tilde{\gamma}+1/2}(\Omega)$,
\begin{align*}
(v,\varphi)_{L^{2}(\Omega)} = \int_{\Omega}\varphi A_{\Omega}^{\tilde{\gamma}+1/2}u \,dx.
\end{align*}
Using the fractional Gauss-Green formula (Lemma \ref{p2GaussGreenFormula}) and the fact that
$\mathcal{N}^{1-2\tilde{\gamma}}u = 0$ on $\partial\Omega$, we obtain that for every $\varphi \in H^{\tilde{\gamma}+1/2}(\Omega)$,
\begin{align*}
(v,\varphi)_{L^{2}(\Omega)} = \frac{C_{2,\tilde{\gamma}+1/2}}{2}\int_{\Omega}\int_{\Omega}
\frac{(u(x)-u(y))(v(x)-v(y))}{|x-y|^{3+2\tilde{\gamma}}}\, dxdy = \mathcal{E}_{\Omega}^{N}(u,\varphi).
\end{align*}
We have shown that $u \in D(A_{L})$ and $A_{L} u = A_{\Omega}^{\tilde{\gamma}+1/2}u$.
Considering Lemma \ref{p2propertiesoffun}, we easily obtain that
\begin{align*}
\mathcal{W} = \{ u\in C_{2\tilde{\gamma}+1}^{2}(\bar{\Omega}),\, \mathcal{N}^{1-2\tilde{\gamma}}u = 0\,\text{on}\, \partial\Omega \}.
\end{align*}
Hence, the proof is completed.
\end{proof}
\end{remark}
Under these considerations, equation (\ref{2boundedFraPro}) should have the following form
\begin{align}\label{2boundedFraProFinal}
\left\{
\begin{aligned}
& \Delta u^{s} + i\omega\tau A_{L}u^{s} + k^{2}(1+q)u^{s} = (-k^{2}q-i\omega\tau k^{2\gamma+1})u^{\text{inc}} \quad \text{in }B_{R}   \\
& \partial_{n}u^{s} = \mathbb{B}u^{s} \quad \text{on }\partial B_{R}.
\end{aligned}
\right.
\end{align}
Define $a:\, H^{1}(B_{R}) \times H^{1}(B_{R}) \rightarrow \mathbb{C}$ as
\begin{align}\label{2.1definea}
\begin{split}
a(u^{s},\varphi) = & \int_{B_{R}} \nabla u^{s} \cdot \nabla \bar{\varphi} dx - i\omega\tau \mathcal{E}_{\Omega}^{N}(u^{s},\bar{\varphi})    \\
& \quad\quad -k^{2}\int_{B_{R}}(1+q(x))u^{s}\bar{\varphi} \,dx - \int_{\partial B_{R}}\mathbb{B}u^{s}\bar{\varphi}dS,
\end{split}
\end{align}
then define $b:\, H^{1}(B_{R}) \rightarrow \mathbb{C}$ as
\begin{align}\label{2.1defineb}
\begin{split}
b(\varphi) = \int_{B_{R}}(k^{2}q(x)+i\omega\tau k^{2\tilde{\gamma}+1})u^{\text{inc}}\bar{\varphi}\,dx.
\end{split}
\end{align}
Now by (\ref{2.1NeumannOperator}) and (\ref{2boundedFraProFinal}),
we easily obtain the variational form of equation (\ref{2boundedFraPro}) as follows:
\begin{align}\label{2.1variationalForm}
a(u^{s},\varphi) = b(\varphi) \quad \forall \, \varphi \in H^{1}(B_{R}).
\end{align}

For a given scatterer $q$, fractional order function $\gamma$ and an incident field $u^{\text{inc}}$, we define
the map $\mathfrak{S}(q,\gamma,u^{\text{inc}})$ by $u^{s} = \mathfrak{S}(q,\gamma,u^{\text{inc}})$,
where $u^{s}$ is the solution of the problem (\ref{2boundedFraPro}) or the variational problem (\ref{2.1variationalForm}).
It is easily seen that the map $\mathfrak{S}(q,\gamma,u^{\text{inc}})$ is linear with respect to $u^{\text{inc}}$ but is
nonlinear with respect to $q$, in addition, $\gamma$ is assumed to be known in the fractional scattering problem.
Hence, we may denote $\mathfrak{S}(q,\gamma,u^{\text{inc}})$ by $\mathfrak{S}(q)u^{\text{inc}}$.
Concerning the map $\mathfrak{S}(q)$, we have the following regularity result.
\begin{theorem}\label{2.1smallKexistence}
Let $0 < \tilde{\gamma} < 1/2$, if the wavenumber $k$ is sufficiently small, the variational problem (\ref{2.1variationalForm})
admits a unique weak solution in $H^{1}(B_{R})$ and $\mathfrak{S}(q)$ is a bounded linear map from $L^{2}(B_{R})$
to $H^{1}(B_{R})$. Furthermore, there is a constant $C$ depend on $B_{R}$ and $\|q\|_{L^{\infty}(B_{R})}$, such that
\begin{align}\label{2.1smallKFormula}
\|\mathfrak{S}(q)u^{\text{inc}}\|_{H^{1}(B_{R})} \leq C k \|u^{\text{inc}}\|_{L^{2}(B_{R})}.
\end{align}
\end{theorem}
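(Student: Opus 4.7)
The plan is to verify the hypotheses of the Lax--Milgram theorem for the sesquilinear form $a$ on $H^{1}(B_{R})$, once the wavenumber $k$ is restricted to a sufficiently small neighborhood of $0$. First I would check boundedness: the $\int_{B_{R}}\nabla u\cdot\nabla\bar\varphi$ piece is controlled by Cauchy--Schwarz; the fractional Neumann form $\mathcal{E}_{\Omega}^{N}$ is continuous on $H^{\tilde\gamma+1/2}(\Omega)$, into which $H^{1}(B_{R})|_{\Omega}$ embeds continuously since $\tilde\gamma+1/2<1$; the $k^{2}(1+q)$ mass term is handled by $\|q\|_{L^{\infty}}$; and the boundary integral is controlled via continuity of $\mathbb{B}:H^{1/2}(\partial B_{R})\to H^{-1/2}(\partial B_{R})$ composed with the trace theorem. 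These same ingredients give
\begin{align*}
|b(\varphi)| &\le \bigl(k^{2}q_{\text{max}} + \omega\tilde\tau\,k^{2\tilde\gamma+1}\bigr)\|u^{\text{inc}}\|_{L^{2}(B_{R})}\|\varphi\|_{L^{2}(B_{R})} \\
&\le Ck\,\|u^{\text{inc}}\|_{L^{2}(B_{R})}\|\varphi\|_{H^{1}(B_{R})}
\end{align*}
for $k$ in any fixed bounded range, since $2\tilde\gamma+1\ge 1$ and $2\ge 1$.

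For coercivity I would separate the real and imaginary parts of $a(u,u)$. Since $\mathcal{E}_{\Omega}^{N}(u,\bar u)=\tfrac{1}{2}C_{2,\tilde\gamma+1/2}\int_{\Omega}\!\int_{\Omega}|u(x)-u(y)|^{2}|x-y|^{-(3+2\tilde\gamma)}\,dx\,dy\ge 0$, one has
\begin{align*}
\Re a(u,u) &= \|\nabla u\|^{2}_{L^{2}(B_{R})} - k^{2}\!\int_{B_{R}}(1+q)|u|^{2}\,dx - \Re\!\int_{\partial B_{R}}\!\mathbb{B}u\,\bar u\,dS,\\
\Im a(u,u) &= -\omega\tilde\tau\,\mathcal{E}_{\Omega}^{N}(u,\bar u) - \Im\!\int_{\partial B_{R}}\!\mathbb{B}u\,\bar u\,dS.
\end{align*}
From the spectral representation (\ref{2.1Boundary}) together with the small-argument asymptotics of $H^{(1)}_{n}(kR)$, I expect $\Re\langle -\mathbb{B}u,u\rangle_{\partial B_{R}}$ to differ from a non-negative quadratic form by an error of order $o(1)$ as $k\to 0^{+}$, while $\Im\langle\mathbb{B}u,u\rangle_{\partial B_{R}}\ge 0$ by the radiation sign. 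Taking a convex combination such as $\Re[(1-i)a(u,u)]$ and absorbing the $k^{2}$ mass term into $\|\nabla u\|^{2}_{L^{2}}+\|u\|^{2}_{L^{2}}$ by means of a Poincaré-type decomposition on $B_{R}$ together with the trace inequality on $\partial B_{R}$, I obtain, for $k$ below a threshold $k_{0}=k_{0}(\tilde\gamma,\tilde\tau,q_{\text{max}},R)$, an inequality of the form $|a(u,u)|\ge c\,\|u\|^{2}_{H^{1}(B_{R})}$ with $c>0$ uniform over $0<k<k_{0}$. Lax--Milgram then yields a unique $u^{s}\in H^{1}(B_{R})$ solving (\ref{2.1variationalForm}), and the linear dependence on the data gives $\|u^{s}\|_{H^{1}(B_{R})}\le c^{-1}\|b\|_{\ast}\le Ck\,\|u^{\text{inc}}\|_{L^{2}(B_{R})}$, which is precisely (\ref{2.1smallKFormula}).

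The main obstacle is the coercivity step. Two features complicate the standard Helmholtz template here: the fractional energy $\mathcal{E}_{\Omega}^{N}$ enters only through the imaginary part and only on the proper subdomain $\Omega\subsetneq B_{R}$, so it cannot be used directly to control $\|u\|_{H^{1}(B_{R})}$; and the $k$-dependent DtN operator must be shown to behave favorably in the low-frequency regime despite the logarithmic singularity of $H^{(1)}_{0}(kR)$ as $k\to 0^{+}$. Balancing these effects through Poincaré and trace inequalities to extract genuine $H^{1}$-coercivity uniform in $k$ below the threshold is where the technical core of the argument lies.
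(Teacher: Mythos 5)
Your route is genuinely different from the paper's, and its pivotal step fails as stated. The paper never tries to make the full form $a$ coercive: it splits $a = a_{1} + k^{2}a_{2}$ with $a_{1}(u,\varphi) = (\nabla u,\nabla\varphi) - i\omega\tau\,\mathcal{E}_{\Omega}^{N}(u,\varphi) - \langle\mathbb{B}u,\varphi\rangle$, proves coercivity of $a_{1}$ alone (using the sign property of the DtN map, Theorem 2.6.4 of \cite{BookBao1}, plus $\mathcal{E}_{\Omega}^{N}(u,u)\ge 0$), and then treats the mass term as a perturbation: it defines $\mathfrak{A}:L^{2}(B_{R})\to H^{1}(B_{R})$ by $a_{1}(\mathfrak{A}u,\varphi)=a_{2}(u,\varphi)$ and $w$ by $a_{1}(w,\varphi)=b(\varphi)$, rewrites the problem as $(I+k^{2}\mathfrak{A})u^{s}=w$, and inverts by a Neumann-series argument for small $k$. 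You instead propose to absorb the $k^{2}$ mass term directly and apply Lax--Milgram once to $a$, which requires the inequality $|a(u,u)|\ge c\,\|u\|_{H^{1}(B_{R})}^{2}$ with $c>0$ \emph{uniform} over $0<k<k_{0}$.

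That uniform coercivity is false, and the obstruction is exactly the low-frequency degeneracy you flag but then assert resolves favorably. Test $u\equiv 1$: the gradient term and the fractional form vanish, the mass term is $O(k^{2})$, and the boundary term reduces to the $n=0$ mode, where the small-argument asymptotics $H^{(1)}_{0}(z)\sim 1+\tfrac{2i}{\pi}\ln z$, $H^{(1)\prime}_{0}(z)\sim -\tfrac{2i}{\pi z}$ give $k\,H^{(1)\prime}_{0}(kR)/H^{(1)}_{0}(kR)=O\bigl(1/(R|\ln(kR)|)\bigr)$. Hence $|a(1,1)|=O\bigl(k^{2}+1/|\ln(kR)|\bigr)\to 0$ as $k\to 0$, while $\|1\|_{H^{1}(B_{R})}^{2}=\pi R^{2}$ is fixed, so no $k$-uniform constant $c$ exists (the same test shows no convex combination $\mathrm{Re}[(1-i)a(u,u)]$ can help). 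The argument can be repaired, but not as written: either follow the paper's perturbation scheme, which never needs the mass term inside a coercivity estimate, or accept a $k$-dependent coercivity constant $c(k)\gtrsim 1/|\ln(kR)|$ (the degeneracy is only logarithmic and only in the zero boundary mode, with $-\mathrm{Re}\langle\mathbb{B}u,u\rangle$ controlling the nonzero modes with $k$-independent constants). In the latter case the conclusion (\ref{2.1smallKFormula}) still follows because $\|b\|_{(H^{1})^{*}}\lesssim (k^{2}\|q\|_{L^{\infty}}+\omega\tilde\tau k^{2\tilde\gamma+1})\|u^{\text{inc}}\|_{L^{2}}$ and $k^{2\tilde\gamma}|\ln k|\to 0$ for $\tilde\gamma>0$, but that is a quantitatively different argument whose boundary-mode estimates you would have to prove; as proposed, the Lax--Milgram step rests on an inequality that a constant function already violates.
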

The proof is inspired by the method used in \cite{Bao1,Bao4,Bao2,Bao3} for integer order Helmholtz equation, here, we give a sketch for concisely.
\begin{proof}
Define
\begin{align*}
a_{1}(u^{s},\varphi) & = (\nabla u^{s},\nabla \varphi) - i\omega\tau\mathcal{E}_{\Omega}^{N}(u^{s},\varphi)
- \langle \mathbb{B}u^{s},\varphi \rangle,    \\
a_{2}(u^{s},\varphi) & = -((1+q)u^{s},\varphi).
\end{align*}
It is obvious that $a = a_{1} + k^{2}a_{2}$.
Since
\begin{align*}
\mathcal{E}_{\Omega}^{N}(u^{s},u^{s}) = \frac{C_{2,s}}{2}\int_{\Omega}\int_{\Omega}
\frac{(u^{s}(x)-u^{s}(y))^{2}}{|x-y|^{3+2\tilde{\gamma}}}\,dxdy \geq 0,
\end{align*}
then, for $a_{1}$, we could obtain
\begin{align*}
|a_{1}(u^{s},u^{s})| & \geq C \|\nabla u^{s}\|_{L^{2}(B_{R})}^{2} + |\omega\tau| \mathcal{E}_{\Omega}^{N}(u^{s},u^{s})
+ C\|u^{s}\|_{L^{2}(\partial B_{R})}^{2}    \\
& \geq C \|u^{s}\|^{2}_{H^{1}(B_{R})},
\end{align*}
where we used Theorem 2.6.4 in \cite{BookBao1}.
Then we define an operator $\mathfrak{A}:L^{2}(B_{R}) \rightarrow H^{1}(B_{R})$ by
\begin{align*}
a_{1}(\mathfrak{A}u^{s},\varphi) = a_{2}(u^{s},\varphi), \quad \forall\, \varphi \in H^{1}(B_{R}).
\end{align*}
Using the Lax-Milgram lemma, it follows that
\begin{align}\label{2.1lax1}
\|\mathfrak{A}u^{s}\|_{H^{1}(B_{R})} \leq C \|u^{s}\|_{L^{2}(B_{R})}.
\end{align}
Define a function $w \in L^{2}(B_{R})$ by requiring $w \in H^{1}(B_{R})$ and satisfying
\begin{align}\label{2.1operator2}
a_{1}(w,\varphi) = b(\varphi) \quad \forall \, \varphi \in H^{1}(B_{R}).
\end{align}
It follows from the Lax-Milgram lemma again that
\begin{align}\label{2.1lax2}
\|w\|_{H^{1}(B_{R})} \leq C (k^{2}\|q\|_{L^{\infty}(B_{R})}+|\omega|\tau k^{2\tilde{\gamma}+1})\|u^{\text{inc}}\|_{L^{2}(B_{R})}.
\end{align}
Using the operator $\mathfrak{A}$, we can see that problem (\ref{2.1variationalForm}) is equivalent to find $u^{s} \in L^{2}(B_{R})$
such that
\begin{align}\label{2.1operatorEqu}
(I+k^{2}\mathfrak{A})u^{s} = w.
\end{align}
When the wavenumber is small enough, the operator $I+k^{2}\mathfrak{A}$ has a uniform bounded inverse.
Then we have the estimate $\|u^{s}\|_{L^{2}(B_{R})} \leq C \|w\|_{L^{2}(B_{R})}$.
Rearranging (\ref{2.1operatorEqu}), we have $u^{s} = w - k^{2}\mathfrak{A}u^{s}$, so we obtain
\begin{align*}
\|u^{s}\|_{H^{1}(B_{R})} \leq \|w\|_{H^{1}(B_{R})} + C k^{2} \|u^{s}\|_{L^{2}(B_{R})} \leq C k \|u^{\text{inc}}\|_{L^{2}(B_{R})},
\end{align*}
where we used (\ref{2.1lax2}) in the second inequality.
\end{proof}

In order to obtain a similar result for some general wavenumber $k > 0$, we need the following uniqueness result.
\begin{lemma}\label{2.1uniqueness}
Given the scatterer $q \in L^{\infty}(B_{R})$, the direct scattering problem (\ref{2boundedFraProFinal}) has at most
one solution.
\end{lemma}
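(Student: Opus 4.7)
The plan is to argue by an energy identity combined with a unique continuation step. Suppose $u_1^s, u_2^s$ are two $H^1(B_R)$ solutions of (\ref{2boundedFraProFinal}) and set $w = u_1^s - u_2^s$, which then satisfies the homogeneous variational equation $a(w,\varphi)=0$ for all $\varphi\in H^1(B_R)$. Choosing $\varphi = w$ and taking the imaginary part kills the real pieces $\int_{B_R}|\nabla w|^2$ and $k^2\int_{B_R}(1+q)|w|^2$, leaving
\begin{align*}
\omega\tau\,\mathcal{E}_{\Omega}^{N}(w,\bar w) \;+\; \mathrm{Im}\!\int_{\partial B_R} \mathbb{B}w\,\bar w\,dS \;=\; 0.
\end{align*}
The first term is the (non-negative) Gagliardo seminorm squared of $w$ on $\Omega$, as seen from (\ref{2.1bilinearBound}). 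For the boundary term, I would expand $w$ on $\partial B_R$ in the Fourier series $w=\sum \hat w_n e^{in\theta}$ and compute, using the Wronskian relation for Hankel functions, that $\mathrm{Im}(k H_n^{(1)'}(kR)/H_n^{(1)}(kR)) > 0$ for every $n$. Hence each term in the decomposition is strictly positive, and the identity forces both terms to vanish.

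From the vanishing of the DtN imaginary part I would conclude $\hat w_n = 0$ for all $n$, hence $w=0$ on $\partial B_R$, and by the transparent boundary condition $\partial_{\mathbf n} w = \mathbb{B}w = 0$ on $\partial B_R$ as well. From the vanishing of $\mathcal{E}_{\Omega}^{N}(w,\bar w)$ I would deduce that $w$ is (almost everywhere) constant on $\Omega$, since the Gagliardo seminorm on a connected bounded domain annihilates only constants.

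Next I would invoke unique continuation. In the open region $B_R\setminus\bar\Omega$, the operator $A_L$ contributes nothing (it is localised on $\Omega$) and $w$ satisfies the standard Helmholtz-type equation $\Delta w + k^2(1+q)w = 0$ with $q\in L^\infty$. The Cauchy data $w=\partial_{\mathbf n}w=0$ on $\partial B_R$ together with the classical unique continuation principle for second-order elliptic operators with bounded coefficients (e.g.\ the Jerison--Kenig/Aronszajn type result) then force $w\equiv 0$ in the connected component $B_R\setminus\bar\Omega$. Taking the $H^1$-trace on $\partial\Omega$ pins the constant value of $w$ on $\Omega$ to $0$, so $w\equiv 0$ throughout $B_R$.

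The principal obstacles I anticipate are twofold. First, one must be careful that the positivity argument really applies in the complex setting: the cross-term $\mathcal{E}_{\Omega}^{N}(w,\bar w)$ must genuinely equal $\tfrac{1}{2}C_{2,\tilde\gamma+1/2}\iint |w(x)-w(y)|^2|x-y|^{-3-2\tilde\gamma}dxdy$, which is clear from the symmetric bilinear structure but should be verified. Second, and more delicate, is invoking unique continuation across $\partial B_R$: this is classical for smooth boundaries and $L^\infty$ coefficients, but one should cite the precise version that tolerates the presence of $q\in L^\infty(B_R)$ without sign or regularity hypotheses beyond boundedness. The rest of the argument is essentially bookkeeping.
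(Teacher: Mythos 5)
Your argument is correct in substance, but it resolves the crucial interior step by a genuinely different route than the paper. Both proofs start the same way: the sign of the imaginary part of the DtN pairing (you compute $\mathrm{Im}\bigl(kH_n^{(1)'}(kR)/H_n^{(1)}(kR)\bigr)>0$ via the Wronskian; the paper cites Theorems 2.6.4--2.6.5 of \cite{BookBao1}) forces $w=\partial_{\mathbf n}w=0$ on $\partial B_{R}$. From there the paper uses Green's \emph{second} identity, in which the fractional contribution cancels identically, so it obtains no information inside $\Omega$ and must finish by ``taking absolute values'' in the strong equation to get a pointwise bound $|\Delta u^{s}|\leq C|u^{s}|$ on all of $B_{R}$ and then invoking the classical unique continuation result of \cite{UniqueContinuation}; that step is delicate, since $\Delta u^{s}=-i\omega\tau A_{L}u^{s}-k^{2}(1+q)u^{s}$ and the nonlocal term cannot simply be dropped from the right-hand side as written. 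Your route instead takes $\mathrm{Im}\,a(w,w)=0$, which retains the fractional Gagliardo energy with a definite sign; its vanishing makes $w$ a.e.\ constant on $\Omega$, so unique continuation is only needed in $B_{R}\setminus\bar{\Omega}$, where the equation is the purely local Helmholtz equation with an $L^{\infty}$ coefficient, and the $H^{1}$ trace match across $\partial\Omega$ then kills the constant. What this buys is independence from any unique continuation property for the regional fractional Laplacian (which, as the paper itself remarks elsewhere, is unavailable) and avoidance of the paper's questionable pointwise inequality; what it costs is two hypotheses you should state explicitly: $\omega\tilde{\tau}>0$ (if $\tilde{\gamma}=0$ the fractional term is absent and classical unique continuation on all of $B_{R}$ finishes directly), and connectedness of $B_{R}\setminus\bar{\Omega}$ so that the zero Cauchy data on $\partial B_{R}$ propagate up to $\partial\Omega$ --- this is the geometry depicted in the paper's Figure 1, but your argument genuinely uses it, whereas a whole-ball unique continuation argument would not.
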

\begin{proof}
It suffice to show that $u^{s} = 0$ in $B_{R}$ if $u^{\text{inc}} = 0$ (no source term). From the Green's formula
and fractional Gauss-Green formula (Lemma \ref{p2GaussGreenTheorem}), we have
\begin{align*}
0 & = \int_{B_{R}} u^{s} (\Delta \bar{u}^{s} + i\omega\tau A_{L}\bar{u}^{s}) - \bar{u}^{s}(\Delta u^{s}+i\omega\tau A_{L}u^{s}) \,dx    \\
& = \int_{\partial B_{R}} u^{s}\frac{\partial \bar{u}^{s}}{\partial \mathbf{n}} - \bar{u}^{s}\frac{\partial u^{s}}{\partial \mathbf{n}} dS
+ i\omega\tau \int_{\partial\Omega} u^{s}\mathcal{N}^{1-2\gamma}\bar{u}^{s} - \bar{u}^{s} \mathcal{N}^{1-2\gamma}u^{s} dS   \\
& = \int_{\partial B_{R}} u^{s}\frac{\partial \bar{u}^{s}}{\partial \mathbf{n}} - \bar{u}^{s}\frac{\partial u^{s}}{\partial \mathbf{n}} dS
= -2i \text{Im}\int_{\partial B_{R}} \bar{u}^{s} \mathbb{B}u^{s} \,dS.
\end{align*}
Now based on same ideas in the proof of Theorem 2.6.5 in \cite{BookBao1}, we obtain that $u^{s} = 0$ on $\partial B_{R}$.
The boundary condition (\ref{2.1Neumann}) yields further $\frac{\partial u^{s}}{\partial \mathbf{n}} = 0$ on $\partial B_{R}$.
Hence, we easily see that $u^{s} = 0$ in $\mathbb{R}^{2}\backslash B_{R}$.
Now, let us recall that for $u^{\text{inc}} = 0$, we have
\begin{align}\label{2boundedFraProzero}
\left\{
\begin{aligned}
& \Delta u^{s} + i\omega\tau A_{L}u^{s} + k^{2}(1+q)u^{s} = 0 \quad \text{in }B_{R}   \\
& \partial_{n}u^{s} = \mathbb{B}u^{s} \quad \text{on }\partial B_{R}.
\end{aligned}
\right.
\end{align}
Taking absolute value on both sides of the above equation, we obtain that
\begin{align*}
|\Delta u^{s}|^{2} + \omega^{2}\tau^{2}|A_{L}u^{s}|^{2} \leq |k^{2}(1+q)u^{s}| \quad \text{for }x\in B_{R}.
\end{align*}
Hence, it is obvious that
\begin{align*}
|\Delta u^{s}(x)|^{2} \leq |k^{2}(1+q(x))u^{s}(x)| \quad \text{for }x\in B_{R}.
\end{align*}
From the results in \cite{UniqueContinuation}, $u^{s} = 0$ in $B_{R}$.
\end{proof}
With the above lemma, we could obtain the following result for general $k > 0$ by using Fredholm alternative theorem.
\begin{theorem}\label{2.1generalK}
Given the scatterer $q \in L^{\infty}(B_{R})$, the variational problem (\ref{2.1variationalForm}) admits
a unique weak solution in $H^{1}(B_{R})$ for all $k > 0$ and $\mathfrak{S}(q)$ is a bounded linear map from
$L^{2}(B_{R})$ to $H^{1}(B_{R})$. Furthermore, the estimate
\begin{align}\label{2.1generalEsti}
\|\mathfrak{S}u^{\text{inc}}\|_{H^{1}(B_{R})} \leq C \|u^{\text{inc}}\|_{L^{2}(B_{R})},
\end{align}
holds, where the constant $C$ depends on $k$, $B_{R}$ and $\|q\|_{L^{\infty}(B_{R})}$.
\end{theorem}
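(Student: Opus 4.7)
The plan is to apply the Fredholm alternative to the operator equation $(I + k^{2}\mathfrak{A})u^{s} = w$ that was already set up in the proof of Theorem \ref{2.1smallKexistence}, now without the smallness restriction on $k$. Recall that $\mathfrak{A}:L^{2}(B_{R}) \to H^{1}(B_{R})$ is the Lax--Milgram solution operator for $a_{1}(\mathfrak{A}u^{s},\varphi) = -((1+q)u^{s},\varphi)$, and $w \in H^{1}(B_{R})$ is determined by $u^{\text{inc}}$ via $a_{1}(w,\varphi) = b(\varphi)$ with $\|w\|_{H^{1}(B_{R})} \leq C\|u^{\text{inc}}\|_{L^{2}(B_{R})}$. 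The coercivity of $a_{1}$, resting on the non-negativity of $\mathcal{E}_{\Omega}^{N}$ and on the coercivity of the DtN term supplied by Theorem 2.6.4 of \cite{BookBao1}, remains valid for every $k > 0$, so the constructions of $\mathfrak{A}$ and $w$ go through verbatim with $k$-dependent constants.

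The first key step is to verify that $\mathfrak{A}$ is compact when regarded as an operator from $L^{2}(B_{R})$ into itself. This is immediate: it factorises as the composition of the bounded map $L^{2}(B_{R}) \to H^{1}(B_{R})$ guaranteed by \eqref{2.1lax1} with the compact Rellich--Kondrachov embedding $H^{1}(B_{R}) \hookrightarrow L^{2}(B_{R})$ on the bounded Lipschitz domain $B_{R}$. Consequently $I + k^{2}\mathfrak{A}$ is a Fredholm operator of index zero on $L^{2}(B_{R})$, so its bounded invertibility is equivalent to injectivity.

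Injectivity is supplied precisely by Lemma \ref{2.1uniqueness}: any $u^{s} \in L^{2}(B_{R})$ with $(I+k^{2}\mathfrak{A})u^{s} = 0$ equals $-k^{2}\mathfrak{A}u^{s}$ and therefore lies in $H^{1}(B_{R})$; unwinding the definitions shows it satisfies the homogeneous version of \eqref{2.1variationalForm} (equivalently \eqref{2boundedFraProzero}), and the lemma forces $u^{s} \equiv 0$. The Fredholm alternative then delivers a bounded inverse $(I+k^{2}\mathfrak{A})^{-1}$ on $L^{2}(B_{R})$ and a unique solution $u^{s} = (I+k^{2}\mathfrak{A})^{-1}w$. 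Rewriting $u^{s} = w - k^{2}\mathfrak{A}u^{s}$, the $L^{2}$-bound on $u^{s}$ together with \eqref{2.1lax1} and the $H^{1}$-bound on $w$ yields $\|u^{s}\|_{H^{1}(B_{R})} \leq \|w\|_{H^{1}(B_{R})} + Ck^{2}\|u^{s}\|_{L^{2}(B_{R})} \leq C\|u^{\text{inc}}\|_{L^{2}(B_{R})}$, with constant depending on $k$, $B_{R}$ and $\|q\|_{L^{\infty}(B_{R})}$, as stated in \eqref{2.1generalEsti}.

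The main obstacle I anticipate is simply confirming that the non-local term $i\omega\tau A_{L}$ does not weaken the regularity required for the compactness step. Because $\mathcal{E}_{\Omega}^{N}$ is non-negative and enters $|a_{1}(u,u)|$ additively with a favourable sign (via $|\omega\tau|\mathcal{E}_{\Omega}^{N}(u,u)$), the coercivity of $a_{1}$ retains full $H^{1}$ control rather than degenerating to $H^{\tilde{\gamma}+1/2}$-control; hence $\mathfrak{A}$ genuinely lands in $H^{1}(B_{R})$ and Rellich--Kondrachov applies without modification. Everything else is a straightforward transcription of the classical Fredholm argument for the integer-order Helmholtz scattering problem.
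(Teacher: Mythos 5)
Your proposal is correct and follows essentially the route the paper intends: the paper's own (unwritten) argument for Theorem \ref{2.1generalK} is exactly to reuse the decomposition $(I+k^{2}\mathfrak{A})u^{s}=w$ from Theorem \ref{2.1smallKexistence}, observe that $\mathfrak{A}$ is compact on $L^{2}(B_{R})$ via the embedding $H^{1}(B_{R})\hookrightarrow L^{2}(B_{R})$, and combine the Fredholm alternative with the uniqueness Lemma \ref{2.1uniqueness} to get bounded invertibility and then the estimate (\ref{2.1generalEsti}). Your additional remarks on the non-negativity of $\mathcal{E}_{\Omega}^{N}$ preserving full $H^{1}$ coercivity of $a_{1}$ are consistent with the paper's treatment.
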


\begin{theorem}\label{2.1LipCon}
Assume that $q_{1},q_{2} \in L^{\infty}(B_{R})$. Then
\begin{align}\label{2.1LipEst}
\|\mathfrak{S}(q_{1})u^{\text{inc}} - \mathfrak{S}(q_{2})u^{\text{inc}}\|_{H^{1}(B_{R})}
\leq C \|q_{1} - q_{2}\|_{L^{\infty}(B_{R})}\|u^{\text{inc}}\|_{L^{2}(B_{R})},
\end{align}
where the constant $C$ depends on $k$, $B_{R}$ and $\|q_{2}\|_{L^{\infty}(B_{R})}$.
\end{theorem}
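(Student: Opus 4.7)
The plan is a standard difference-of-solutions argument combined with the well-posedness of Theorem \ref{2.1generalK}. Set $u_j^s:=\mathfrak{S}(q_j)u^{\text{inc}}$ for $j=1,2$ and $w:=u_1^s-u_2^s$. Inspection of \eqref{2.1definea}--\eqref{2.1defineb} shows that the only $q$-dependent contributions to the variational problem are the volume integrals $-k^{2}\int_{B_R}(1+q)u^s\bar\varphi\,dx$ in $a$ and $k^{2}\int_{B_R}q\,u^{\text{inc}}\bar\varphi\,dx$ in $b$; the gradient, fractional-form, DtN, and $i\omega\tau k^{2\tilde\gamma+1}u^{\text{inc}}$ contributions are $q$-independent and drop out on subtraction. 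Writing $a_{q}$ for the form with coefficient $q$, adding and subtracting $a_{q_2}(u_1^s,\varphi)$ in the identity $a_{q_1}(u_1^s,\varphi)-a_{q_2}(u_2^s,\varphi)=b_{q_1}(\varphi)-b_{q_2}(\varphi)$ yields
\begin{align*}
a_{q_2}(w,\varphi) \;=\; k^{2}\int_{B_R}(q_1-q_2)\bigl(u^{\text{inc}}+u_1^s\bigr)\bar\varphi\,dx,\qquad \varphi\in H^{1}(B_R).
\end{align*}

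The right-hand side defines an antilinear functional $F$ on $H^{1}(B_R)$ whose dual norm is bounded by Cauchy--Schwarz as
\begin{align*}
\|F\|_{H^{1}(B_R)^{*}}\;\leq\; C k^{2}\|q_1-q_2\|_{L^{\infty}(B_R)}\bigl(\|u^{\text{inc}}\|_{L^{2}(B_R)}+\|u_1^s\|_{L^{2}(B_R)}\bigr).
\end{align*}
I then invoke the invertibility of $a_{q_2}$ on $H^{1}(B_R)$ to conclude $\|w\|_{H^{1}(B_R)}\leq C\|F\|_{H^{1}(B_R)^{*}}$, and a final application of Theorem \ref{2.1generalK} to $u_1^s$ gives $\|u_1^s\|_{L^{2}(B_R)}\leq C\|u^{\text{inc}}\|_{L^{2}(B_R)}$. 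Combining these yields the claimed Lipschitz estimate \eqref{2.1LipEst}.

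The one delicate point, which I regard as the main obstacle, is that Theorem \ref{2.1generalK} is stated only for right-hand sides of the specific scattering form $b(\varphi)$, whereas I need $a_{q_2}$ to be boundedly invertible as a map $H^{1}(B_R)\to H^{1}(B_R)^{*}$ against a general antilinear functional. This stronger statement is nevertheless contained implicitly in the existing proof: the operator $I+k^{2}\mathfrak{A}$ of \eqref{2.1operatorEqu} is shown via the Fredholm alternative (using Lemma \ref{2.1uniqueness}) to have bounded inverse on $L^{2}(B_R)$, and this, together with the Lax--Milgram isomorphism for $a_1$ underlying \eqref{2.1lax1}, transfers to invertibility of $a_{q_2}$ against arbitrary antilinear data. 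The dependence of the combined constant on $\|q_1\|_{L^{\infty}(B_R)}$ (entering through the bound on $\|u_1^s\|_{L^{2}(B_R)}$) is absorbed into $C$ using the a priori bound $q(\cdot)\leq q_{\text{max}}$ from Assumption 1; everything else is elementary bookkeeping.
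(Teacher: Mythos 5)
Your proposal is correct and follows essentially the same route as the paper: subtract the two variational problems so that the difference solves the same scattering problem with source $k^{2}(q_{1}-q_{2})(u^{\text{inc}}+u_{j}^{s})$, apply the well-posedness/invertibility argument behind Theorem \ref{2.1generalK} to the difference, and then bound the remaining scattered field by Theorem \ref{2.1generalK}. The only (immaterial) difference is that you keep the form with coefficient $q_{2}$ and put $u_{1}^{s}$ in the source, while the paper keeps coefficient $q_{1}$ and uses $u_{2}^{s}$; your explicit remark that the estimate really needs invertibility against general antilinear data, not just the specific $b(\varphi)$, matches what the paper uses implicitly when it says ``using similar methods in proving Theorem \ref{2.1generalK}.''
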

\begin{proof}
Let $u_{1}^{s} = \mathfrak{S}(q_{1})u^{\text{inc}}$ and $u_{2}^{s} = \mathfrak{S}(q_{2})u^{\text{inc}}$.
It follows that for $j=1,2$
\begin{align*}
\Delta u_{j}^{s} + i\omega\tau A_{L}u_{j}^{s} + k^{2}(1+q_{j})u_{j}^{s} = (-k^{2}q_{j}-i\omega\tau k^{2\gamma+1})u^{\text{inc}}.
\end{align*}
By setting $\delta u^{s} = u_{1}^{s} - u_{2}^{s}$, we have
\begin{align*}
\Delta \delta u^{s} + i\omega\tau A_{L}\delta u^{s} + k^{2}(1+q_{1})\delta u^{s} = -k^{2}(q_{1}-q_{2})(u^{\text{inc}}+u_{2}^{s}).
\end{align*}
The function $\delta u^{s}$ also satisfies the boundary condition
\begin{align*}
\partial_{n}\delta u^{s} = \mathbb{B}\delta u^{s} \quad \text{on }\partial B_{R}.
\end{align*}

Using similar methods in proving Theorem \ref{2.1generalK}, we could obtain that
\begin{align*}
\|\delta u^{s}\|_{H^{1}(B_{R})} \leq C \|q_{1}-q_{2}\|_{L^{\infty}(B_{R})}\|u^{\text{inc}} + u_{2}^{s}\|_{L^{2}(B_{R})}.
\end{align*}
Using Theorem \ref{2.1generalK} for $u_{2}^{s}$, we have
\begin{align*}
\|u_{2}^{s}\|_{H^{1}(B_{R})} \leq C \|u^{\text{inc}}\|_{L^{2}(B_{R})},
\end{align*}
which gives
\begin{align*}
\|\mathfrak{S}(q_{1})u^{\text{inc}} - \mathfrak{S}(q_{2})u^{\text{inc}}\|_{H^{1}(B_{R})}
\leq C \|q_{1} - q_{2}\|_{L^{\infty}(B_{R})}\|u^{\text{inc}}\|_{L^{2}(B_{R})},
\end{align*}
where the constant $C$ depends on $k$, $B_{R}$ and $\|q_{2}\|_{L^{\infty}(B_{R})}$.
\end{proof}

\subsection{Dispersion-dominated model}
In this section, we focus on the dispersion-dominated model.
Based on time-domain equation (\ref{1dispersion}),
we can easily obtain the dispersion-dominated fractional Helmholtz equation as follows:
\begin{align}\label{2.2helmholtzEquation}
(-\Delta)^{\gamma+1}u - k^{2}(1+q)u = 0 \quad\text{in }\mathbb{R}^{2}.
\end{align}
It is obvious that model (\ref{2.2helmholtzEquation}) is a higher order elliptic equation, so we may
transfer it to a lower order elliptic system.
As in the above section, the total field $u$ consists of the incident field $u^{\text{inc}}$ and the scattered field $u^{s}$:
\begin{align}\label{2.2inands}
u = u^{\text{inc}} + u^{s},
\end{align}
with $u^{\text{inc}}(x) = e^{i k x \cdot \mathbf{d}}$. Using formula (\ref{2.1fractionalE}), we will obtain
\begin{align}\label{2.2incidentEquation}
(-\Delta)^{\gamma+1}u^{\text{inc}} - k^{2\gamma + 2}u^{\text{inc}} = 0.
\end{align}
Now, we easily obtain the scattered field $u^{s}$ satisfies
\begin{align}\label{2.2scatterEquation}
(-\Delta)^{\gamma+1}u^{s} - k^{2}(1+q)u^{s} = (k^{2}q + k^{2} - k^{2\gamma+2})u^{\text{inc}}.
\end{align}
Because (\ref{2.2scatterEquation}) is a $2\gamma+2 \geq 2$ order elliptic equation,
this equation seems more difficult than the loss-dominated equation.

By our assumption, there has attenuation effect in the domain $\Omega \subset \supp(q) \subset B_{R}$  and
no attenuation effect in $B_{R} \backslash \Omega$.
Hence, we may see that the operator $(-\Delta)^{\gamma}$ brings some ``perturbation'' of the non-attenuation equation and the higher order
equation (\ref{2.2helmholtzEquation}) could be transformed to the following form:
\begin{align}\label{2.2systemPDE}
\begin{split}
\left\{
\begin{aligned}
& (-\Delta)g^{s} - k(1+q)u^{s} = (k q+k-k^{2\gamma+1})u^{\text{inc}} \quad \text{in }\mathbb{R}^{2},  \\
& (-\Delta)^{\tilde{\gamma}}u^{s} - k g^{s} = 0 \quad \text{in }\Omega,    \\
& \lim_{r\rightarrow \infty}\sqrt{r}\left( \frac{\partial g^{s}}{\partial\mathbf{n}} - ikg^{s} \right) = 0, \\
& u^{s} = g^{s} \quad \text{on }\partial\Omega,
\end{aligned}
\right.
\end{split}
\end{align}
with $r = |x|$. In the above system and in the following, $u^{s}$ outside $\Omega$ understand as $g^{s}$.
Our fractional equation could be reduced to (\ref{2.1tradi}) on $\mathbb{R}^{2}\backslash \Omega$, hence,
the operator defined in (\ref{2.1Boundary}) still valid. And as considered in the loss-dominated case, we consider
bounded domain equation, hence, we may replace $(-\Delta)^{\tilde{\gamma}}$ to $A_{\Omega}^{\tilde{\gamma}}$.
Based on these considerations, we obtain the following elliptic system
\begin{align}\label{2.2systemPDEFinal}
\begin{split}
\left\{
\begin{aligned}
& (-\Delta)g^{s} - k(1+q)u^{s} = (k q+k-k^{2\gamma+1})u^{\text{inc}} \quad \text{in }B_{R},  \\
& A_{\Omega}^{\tilde{\gamma}}u^{s} - k g^{s} = 0 \quad \text{in }\Omega,    \\
& \frac{\partial}{\partial\mathbf{n}}g^{s} = \mathbb{B}g^{s}  \quad \text{on }\partial B_{R}, \\
& u^{s} = g^{s} \quad \text{on } \partial\Omega.
\end{aligned}
\right.
\end{split}
\end{align}
Because $g^{s}$ satisfies a second order elliptic equation, we may expect that $g^{s} \in H^{1}(B_{R})$.
From the second equation in system (\ref{2.2systemPDEFinal}), we may expect that $u^{s}\in H^{1}(B_{R})$.
Hence, there should be no boundary term in the fractional Gauss-Green formula (\ref{p2GaussGreenFormula}).
Define
\begin{align*}
\mathcal{E}_{\Omega}^{D}(u^{s},\psi) = \frac{C_{2,\tilde{\gamma}}}{2} \int_{\Omega}\int_{\Omega}
\frac{(u^{s}(x)-u^{s}(y))(\psi(x)-\psi(y))}{|x-y|^{2+2\tilde{\gamma}}}dxdy.
\end{align*}
For $U^{s} = (g^{s},u^{s}) \in H^{1}(B_{R}) \times H^{\tilde{\gamma}}(\Omega)$
and $\Phi = (\varphi,\psi) \in H^{1}(B_{R}) \times H^{\tilde{\gamma}}(\Omega)$, define
\begin{align*}
a_{D}(U^{s},\Phi) = & \int_{B_{R}}\nabla g^{s}\cdot \nabla \bar{\varphi} dx - \int_{\partial B_{R}}\mathbb{B}g^{s}\bar{\varphi}dS \\
& - k \int_{B_{R}}(1+q)u^{s}\bar{\varphi}dx + \mathcal{E}_{\Omega}^{D}(u^{s},\bar{\psi})
- k\int_{\Omega}g^{s}\bar{\psi}dx,
\end{align*}
and
\begin{align*}
b_{D}(\Phi) = k\int_{B_{R}}(q + 1 - k^{2\gamma})u^{\text{inc}}\bar{\varphi}dx.
\end{align*}
Then we can define the weak formulation of system (\ref{2.2systemPDEFinal}) as follow:
\begin{align}\label{2.2weakFormula}
a_{D}(U^{s},\Phi) = b_{D}(\Phi) \quad \forall \, \Phi \in H^{1}(B_{R}) \times H^{\tilde{\gamma}}(\Omega).
\end{align}

Now we give the main result of this section.
\begin{theorem}\label{2.2smallK}
Let $0 < \tilde{\gamma} < 1/2$, for a large enough constant $C > 0$,
\begin{align*}
C k^{1/2}(1+\|q\|_{L^{\infty}(B_{R})}) < 1.
\end{align*}
The variational problem (\ref{2.2weakFormula})
admits a unique weak solution in $H^{1}(B_{R}) \times \dot{H}^{\tilde{\gamma}}(\Omega)$.
\end{theorem}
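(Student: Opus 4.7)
The plan is to adapt the two-step Lax--Milgram-plus-Neumann-series argument used in Theorem \ref{2.1smallKexistence} to the coupled system setting. I would decompose
\[
a_D(U^s,\Phi) = a_1(U^s,\Phi) + k\,a_2(U^s,\Phi),
\]
where $a_1$ collects the diagonal principal parts (the Dirichlet energy of $g^s$ together with the DtN boundary term on $\partial B_R$, and the quadratic form $\mathcal{E}_{\Omega}^{D}$ controlling $u^s$), while $a_2$ collects the off-diagonal $L^2$ coupling terms $-\int_{B_R}(1+q)u^s\bar\varphi\,dx$ and $-\int_{\Omega} g^s\bar\psi\,dx$. The right-hand side $b_D(\Phi)$ already carries a prefactor of $k$, so no further splitting is needed there.

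Next I would establish coercivity of $a_1$ on the product space $H^1(B_R)\times \dot H^{\tilde\gamma}(\Omega)$. The $g^s$-diagonal block is treated exactly as in the proof of Theorem \ref{2.1smallKexistence}: invoking Theorem 2.6.4 of \cite{BookBao1}, the imaginary part of $\langle \mathbb{B}g^s,g^s\rangle$ controls $\|g^s\|_{L^2(\partial B_R)}^2$, which combined with $\|\nabla g^s\|_{L^2(B_R)}^2$ yields $|a_1((g^s,0),(g^s,0))| \geq C\|g^s\|_{H^1(B_R)}^2$. The $u^s$-diagonal block is immediate since $\mathcal{E}_{\Omega}^{D}(u^s,u^s)$ is, by definition, the square of the $\dot H^{\tilde\gamma}(\Omega)$-seminorm. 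Together these produce $|a_1(U^s,U^s)| \geq C\|U^s\|_{H^1(B_R)\times \dot H^{\tilde\gamma}(\Omega)}^2$.

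With coercivity in hand, Lax--Milgram defines a bounded operator $\mathfrak{A}$ on $H^1(B_R)\times\dot H^{\tilde\gamma}(\Omega)$ via $a_1(\mathfrak{A}U,\Phi) = a_2(U,\Phi)$, as well as a unique element $W$ satisfying $a_1(W,\Phi)=b_D(\Phi)$. Continuity of $a_2$ (together with the continuous embedding $\dot H^{\tilde\gamma}(\Omega)\hookrightarrow L^2(\Omega)$) gives $\|\mathfrak{A}\|\leq C(1+\|q\|_{L^\infty(B_R)})$, and continuity of $b_D$ gives $\|W\|\leq C k\|u^{\text{inc}}\|_{L^2(B_R)}$. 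Problem (\ref{2.2weakFormula}) then becomes $(I+k\mathfrak{A})U^s = W$, which is invertible by a Neumann series under the smallness hypothesis, giving existence and uniqueness in one stroke.

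The step I expect to be thorniest is pinning down the convention that \emph{$u^s$ outside $\Omega$ is understood as $g^s$}, since this turns the term $-k\int_{B_R}(1+q)u^s\bar\varphi\,dx$ into a mixture of a genuine off-diagonal coupling on $\Omega$ and a self-interaction of $g^s$ on $B_R\setminus\Omega$ which must be absorbed into the $a_1$ block rather than treated as a perturbation. A Young-type splitting of the $\Omega$-coupling is presumably also what forces the final smallness to involve $k^{1/2}$ rather than $k$: one half gets controlled by $\|g^s\|_{H^1}$, the other half by $\mathcal{E}_{\Omega}^{D}(u^s,u^s)$, and the two norms enter asymmetrically in the estimate. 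A second obstacle, which I do not see how to overcome, is that unlike in the loss-dominated case there is no clean unique-continuation lemma analogous to Lemma \ref{2.1uniqueness} available for this higher-order system, which is why one cannot upgrade the result to arbitrary $k>0$ through the Fredholm alternative as in Theorem \ref{2.1generalK}.
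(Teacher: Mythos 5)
Your overall strategy (splitting $a_D$ into a coercive diagonal part plus a small coupling, then Lax--Milgram and a Neumann series) is a genuinely different route from the paper, which instead constructs the solution by the Picard-type iteration (\ref{2.2iterative}), proves uniform bounds on the iterates using the smallness of $k$, extracts a weakly convergent subsequence by compact embedding, and proves uniqueness separately through the contraction-style energy estimate (\ref{2.2uniqueness}). If your functional-analytic setup were sound it would be cleaner, delivering existence and uniqueness in one stroke. However, there is a genuine gap in the $u^s$-block. Coercivity of your $a_1$ on the second factor holds only if the norm of that factor is the Gagliardo seminorm, i.e. only if constants are treated as zero in $\dot H^{\tilde\gamma}(\Omega)$; but then the couplings you place in $a_2$ --- in particular $\int_\Omega(1+q)u^s\bar\varphi\,dx$ and the term $\int_\Omega g^s\bar\psi\,dx$ on the test side --- are not bounded (indeed not even well defined) on that space, because the seminorm does not control $\|u^s\|_{L^2(\Omega)}$ on a bounded domain: the embedding $\dot H^{\tilde\gamma}(\Omega)\hookrightarrow L^2(\Omega)$ you invoke fails on constants. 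So you cannot simultaneously have coercivity of $a_1$ and boundedness of $a_2$ (hence of $\mathfrak{A}$, and of the data functional) on the bare space $H^1(B_R)\times\dot H^{\tilde\gamma}(\Omega)$, and Lax--Milgram does not apply as you state it.

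The missing ingredient is precisely the interface condition $u^s=g^s$ on $\partial\Omega$, which your one-shot formulation discards. In the paper this condition is built into every step of the iteration through the boundary integrals in (\ref{2.2iterativeWeak2}) ($\int_{\partial\Omega}u_{n+1}^s\bar\psi\,dS$ on the left, $\int_{\partial\Omega}g_n^s\bar\psi\,dS$ on the right), which make the $u$-equation solvable, and the $L^2$-control needed for the coupling is obtained by applying the Poincar\'e-type inequality of \cite{Poincare} to the difference $u_n^s-g_{n-1}^s$, which satisfies the matching condition on $\partial\Omega$ --- not to $u_n^s$ itself. Your guess about the origin of the $k^{1/2}$ is in the right spirit (it comes from the Young-inequality absorption leading to (\ref{2.2guji3})), and your closing remark about the lack of a unique continuation result for general $k$ agrees with the paper's own remark. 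But to make your Neumann-series argument rigorous you would need to reformulate the problem on a space (or affine subspace) encoding $u^s=g^s$ on $\partial\Omega$, or add the corresponding $\partial\Omega$ boundary terms to $a_D$, and only then can the diagonal block be made coercive while keeping the couplings bounded.
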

\begin{proof}
\textbf{Step 1}:
Because the complexity of our problem, we choose an iterative methods to show the existence of this problem.
Let $u_{0}^{s} = g_{0}^{s} = 0$, then we can write the following system
\begin{align}\label{2.2iterative}
\left\{
\begin{aligned}
& -\Delta g_{n+1}^{s} = k(1+q)u_{n}^{s} + k(q+1-k^{2\gamma})u^{\text{inc}} \quad \text{in }B_{R},   \\
& A_{\Omega}^{\tilde{\gamma}}u_{n+1}^{s} = kg_{n}^{s} \quad\text{in }\Omega,  \\
& \frac{\partial}{\partial\mathbf{n}}g^{s}_{n+1} = \mathbb{B}g_{n+1}^{s} \quad \text{on }\partial B_{R},    \\
& u_{n+1}^{s} = g_{n}^{s} \quad \text{on }\partial\Omega.
\end{aligned}
\right.
\end{align}
The weak form of the above system (\ref{2.2iterative}) then could be written as follows:
\begin{align}\label{2.2iterativeWeak1}
\begin{split}
& \int_{B_{R}}\nabla g_{n+1}^{s}\cdot\nabla \bar{\varphi} dx - \int_{\partial B_{R}}\mathbb{B}g_{n+1}^{s}\bar{\varphi} dS \\
& \quad\quad\quad\quad\quad\quad\quad
= k\int_{B_{R}}(1+q)u_{n}^{s}\bar{\varphi} dx + k\int_{B_{R}}(q+1-k^{2\gamma})u^{\text{inc}}\bar{\varphi} dx,
\end{split}
\end{align}
\begin{align}\label{2.2iterativeWeak2}
\begin{split}
& \mathcal{E}_{\Omega}^{D}(u_{n+1}^{s},\bar{\psi}) + \int_{\partial\Omega}u_{n+1}^{s}\bar{\psi}dS \\
& \quad\quad\quad\quad\quad\quad\quad
= k\int_{\Omega}g_{n}^{s}\bar{\psi}dx + \int_{\partial\Omega}g_{n}^{s}\bar{\psi}dS.
\end{split}
\end{align}
Considering this system could be solved easily by using Lax-Milgram lemma,
we may obtain a series of solution $u^{s}_{n} \in H^{\tilde{\gamma}}(\Omega)$ and $g_{n}^{s} \in H^{1}(B_{R})$ with $n = 0,1,2,\cdots$.

Now, we need some uniform estimates of the solution series $\{g^{s}_{n},u_{n}^{s}\}_{n=0,1,2,\cdots}$.
Taking $\varphi$, $\psi$ equal to $g_{n+1}^{s}$, $u_{n+1}^{s}$ in (\ref{2.2iterativeWeak1}),
we will obtain
\begin{align}\label{2.2uniform1}
\begin{split}
\|g_{n+1}^{s}\|_{H^{1}(B_{R})} \leq & C k (1+\|q\|_{L^{\infty}(B_{R})})\|u_{n}^{s}-g_{n-1}^{s}\|_{L^{2}(B_{R})}    \\
& + C k (1+\|q\|_{L^{\infty}(B_{R})})\|g_{n-1}^{s}\|_{L^{2}(B_{R})}  \\
& + C k (1+\|q\|_{L^{\infty}(B_{R})}) \|u^{\text{inc}}\|_{L^{2}(B_{R})},    \\
\leq & C k (1+\|q\|_{L^{\infty}(B_{R})})\|u_{n}^{s}-g_{n-1}^{s}\|_{\dot{H}^{\tilde{\gamma}}(B_{R})}    \\
& + C k (1+\|q\|_{L^{\infty}(B_{R})})\|g_{n-1}^{s}\|_{L^{2}(B_{R})}  \\
& + C k ( 1+  \|q\|_{L^{\infty}(B_{R})}) \|u^{\text{inc}}\|_{L^{2}(B_{R})}.
\end{split}
\end{align}
For the second inequality, the results in \cite{Poincare} have been used, which are similar to the Poincar\'{e} inequalities.
By simple calculations, we have
\begin{align}\label{2.2guji1}
\begin{split}
\|g_{n+1}^{s}\|_{H^{1}(B_{R})} \leq & C k(1+\|q\|_{L^{\infty}(B_{R})})\|u_{n}^{s}\|_{\dot{H}^{\tilde{\gamma}}(B_{R})} \\
& + 2 C k(1+\|q\|_{L^{\infty}(B_{R})})\|g_{n-1}^{s}\|_{H^{1}(B_{R})} \\
& + C k (1+\|q\|_{L^{\infty}(B_{R})}) \|u^{\text{inc}}\|_{L^{2}(B_{R})}.
\end{split}
\end{align}
Taking $\varphi$, $\psi$ equal to $g_{n+1}^{s}$, $u_{n+1}^{s}$ in (\ref{2.2iterativeWeak2}), we will obtain that
\begin{align}\label{2.2guji2}
\begin{split}
\|u_{n+1}^{s}\|_{\dot{H}^{\tilde{\gamma}}(\Omega)}^{2} & \leq k \int_{\Omega}g_{n}^{s}(u_{n+1}^{s}-g_{n}^{s}) + |g_{n}^{s}|^{s}dx  \\
& \leq k \|g_{n}^{s}\|_{L^{2}(\Omega)}\|u_{n+1}^{s} - g_{n}^{s}\|_{\dot{H}^{\tilde{\gamma}}(\Omega)}
+ k \|g_{n}^{s}\|_{L^{2}(\Omega)}  \\
& \leq C k \|g_{n}^{s}\|_{H^{1}(\Omega)}^{2} + \frac{1}{2}\|u_{n+1}^{s}\|_{\dot{H}^{\tilde{\gamma}}(\Omega)}^{2}.
\end{split}
\end{align}
Then, we easily know that
\begin{align}\label{2.2guji3}
\|u_{n+1}^{s}\|_{\dot{H}^{\tilde{\gamma}}(\Omega)} \leq C k^{1/2} \|g_{n}^{s}\|_{H^{1}(B_{R})}.
\end{align}
Now we assume that $\|g_{k}^{s}\|_{H^{1}(B_{R})} + \|u_{k}^{s}\|_{\dot{H}^{\tilde{\gamma}}(\Omega)} \leq 1$ with $k = 0,1,\cdots,n-1$.
Combining (\ref{2.2guji1}) and (\ref{2.2guji3}), we finally obtain that
\begin{align}\label{2.2guji4}
\|g_{n}^{s}\|_{H^{1}(B_{R})} + \|u_{n}^{s}\|_{\dot{H}^{\tilde{\gamma}}(\Omega)}
\leq C k^{1/2}(1+\|q\|_{L^{\infty}(B_{R})})(1+\|u^{\text{inc}}\|_{L^{2}(B_{R})}).
\end{align}
By our condition on $k$, we know that
\begin{align}\label{2.2kongzhi}
\|g_{n}^{s}\|_{H^{1}(B_{R})} + \|u_{n}^{s}\|_{\dot{H}^{\tilde{\gamma}}(\Omega)} \leq 1.
\end{align}
Hence, we obtain that (\ref{2.2kongzhi}) holds for $n \in \mathbb{N}$.

From Section 7 in \cite{fractionalSobolev}, we know that $\dot{H}^{\tilde{\gamma}}(\Omega)$ and $H^{1}(B_{R})$ are compactly
embedded into the space $L^{2}(\Omega)$ and $L^{2}(B_{R})$ separately, then
we could obtain that for some function $u^{s}$ and $g^{s}$,
\begin{align}\label{2.2convergencePro}
\begin{split}
& u_{n}^{s} \rightharpoonup u^{s} \quad \text{in }\dot{H}^{\tilde{\gamma}}(\Omega), \quad
g_{n}^{s} \rightharpoonup g^{s} \quad \text{in }H^{1}(B_{R}),    \\
& u_{n}^{s} \rightarrow u^{s} \quad \text{in }L^{2}(\Omega), \quad
g_{n}^{s} \rightarrow g^{s} \quad \text{in }L^{2}(B_{R}),
\end{split}
\end{align}
where ``$\rightharpoonup$'' stands for weak convergence.
Adding (\ref{2.2iterativeWeak1}) and (\ref{2.2iterativeWeak2}) together, then using the above
convergence properties (\ref{2.2convergencePro}), we finally arrive at
\begin{align}\label{2.2yanzheng}
a_{D}(U^{s},\Phi) = b_{D}(\Phi),
\end{align}
with $U^{s} = (g^{s},u^{s})$ and $\Phi = (\varphi,\psi)$.
Hence, a solution of our system (\ref{2.2weakFormula}) has been found.\\
\textbf{Step 2}: Taking two solutions $U_{1}^{s} = (g_{1}^{s},u_{1}^{s})$ and $U_{2}^{s} = (g_{2}^{s},u_{2}^{s})$.
Denote $\delta U^{s} = U_{1}^{s} - U_{2}^{s} = (\delta g^{s}, \delta u^{s})$, then $\delta U^{s}$ satisfies
\begin{align}\label{2.2PDEFinalUniqueness}
\begin{split}
\left\{
\begin{aligned}
& (-\Delta)\delta g^{s} - k (1+q)\delta u^{s} = 0 \quad \text{in }B_{R},  \\
& A_{\Omega}^{\tilde{\gamma}}\delta u^{s} - k \delta g^{s} = 0 \quad \text{in }\Omega,    \\
& \frac{\partial}{\partial\mathbf{n}}\delta g^{s} = \mathbb{B}\delta g^{s}  \quad \text{on }\partial B_{R}, \\
& \delta u^{s} = \delta g^{s} \quad \text{on }\partial\Omega.
\end{aligned}
\right.
\end{split}
\end{align}
For the above system (\ref{2.2PDEFinalUniqueness}), performing same procedure
from (\ref{2.2guji1}) to (\ref{2.2guji4}), we could obtain that
\begin{align}\label{2.2uniqueness}
\begin{split}
& \|\delta g^{s}\|_{H^{1}(B_{R})} + \|\delta u^{s}\|_{\dot{H}^{\tilde{\gamma}}(\Omega)}   \\
& \quad\quad\quad
\leq C k^{1/2}(1+\|q\|_{L^{\infty}(B_{R})})(\|\delta g^{s}\|_{H^{1}(B_{R})} + \|\delta u^{s}\|_{\dot{H}^{\tilde{\gamma}}(\Omega)}).
\end{split}
\end{align}
From our assumptions, we find that
\begin{align}\label{2.2uniquenessFinal}
\|\delta g^{s}\|_{H^{1}(B_{R})} + \|\delta u^{s}\|_{\dot{H}^{\tilde{\gamma}}(\Omega)}
< \|\delta g^{s}\|_{H^{1}(B_{R})} + \|\delta u^{s}\|_{\dot{H}^{\tilde{\gamma}}(\Omega)}.
\end{align}
Hence, the proof is completed.
\end{proof}
\begin{remark}\label{2.2mainremark}
Theorem \ref{2.2smallK} seems strange, we only obtain $H^{\tilde{\gamma}}(\Omega)$ regularity of $u^{s}$.
From the second equation in (\ref{2.2systemPDEFinal}) and $g^{s} \in H^{1}(B_{R})$, we may obtain that
$u^{s}\in H^{1+\tilde{\gamma}}(\Omega) \cup H^{1}(B_{R})$.
The key point is the following fractional order elliptic equation
\begin{align}\label{2.2remarkHigh}
\left\{
\begin{aligned}
& A_{\Omega}^{\tilde{\gamma}}u^{s} = k g^{s} \quad\text{in }\Omega,  \\
& u^{s} = g^{s} \quad \text{on }\partial\Omega.
\end{aligned}
\right.
\end{align}
Intuitively, we could obtain higher regularity properties of $u^{s}$,
however, to our knowledge, there is no rigorous results about interior and boundary regularity for
equation (\ref{2.2remarkHigh}).
Because the studies about regularity properties of elliptic equation with operator $A_{\Omega}^{s}$ is
also a new topic in elliptic equation field \cite{Guan1,Guan2,MouChen},
a rigorous investigation of equation (\ref{2.2remarkHigh}) deserved to write another paper.
Hence, we would not investigate further on equation (\ref{2.2remarkHigh}) in this paper.
\end{remark}

\begin{remark}
Equation (\ref{2.2scatterEquation}) seems much more difficult than loss-dominated equation (\ref{2.1model}).
For general $k$, we can not provide a uniqueness result similar to Lemma \ref{2.1uniqueness}.
Different from the integer-order case \cite{UniqueContinuation}, 
there seems no unique continuation result of the fractional Laplace operator.
Hence, the dispersion-dominated equation needs further investigations.
\end{remark}

\section{Inverse Methods}\label{3inverse}

In the first part of this section, we provide the well-posedness theory of Bayesian inversion with model reduction error.
Then, as a straightforward extension, we show the relationship between the Bayesian method and the regularization method.
In the second part of this section, we investigate the small error limit problem, that is, whether the estimated function
tends to be the true function if both the model reduction error and white noise vanish. 
At last, the general theory has been applied to a concrete inverse scattering problem.

\subsection{Well-posedness}

Let $X,Y$ be separable Hilbert space, equipped with the Borel $\sigma$-algebra,
and $\mathcal{G}_{a}:X\rightarrow Y$ be a measurable mapping. We wish to solve the inverse problem of finding $x$ from $y$ where
\begin{align}\label{3.2approModel}
y = \mathcal{G}_{a}(x) + \epsilon + \eta,
\end{align}
and $\eta \in Y$ denotes noise, $\epsilon$ denotes model reduction error.
We employ a Bayesian approach to this problem in which we let $(x,y) \in X \times Y$ be a random variable and compute
$x|y$.
We specify the assumptions on the random variable $(x,y)$ as follows:

\textbf{Assumption 2}:
\begin{itemize}
  \item Prior: $x \sim \mu_{0}$ measure on $X$ and $\mu_{0}$ is chosen to be a Gaussian with mean $\bar{x}$ and
  covariance operator $\mathcal{C}_{x} \in L_{1}^{+}(X)$.
  \item Noise: $\eta \sim \mathbb{Q}_{0} = \mathcal{N}(0,\mathcal{C}_{\eta})$ measure on $Y$
  with $\mathcal{C}_{\eta} \in L_{1}^{+}(Y)$, and $\eta \perp x$.
  \item Model Reduction Error: $\epsilon \sim \mathbb{R}_{\bar{\epsilon}} = \mathcal{N}(\bar{\epsilon},\mathcal{C}_{\epsilon})$
  measure on $Y$ with $\mathcal{C}_{\epsilon} \in L_{1}^{+}(Y)$, and $\eta \perp \epsilon$.
\end{itemize}

For simplicity, we take $Y = \mathbb{R}^{J}$ with $J\in \mathbb{N}^{+}$ in the following.
Denote $(E, \langle\cdot,\cdot\rangle, \|\cdot\|_{E})$ as the Cameron-Martin space of the Gaussian measure $\mu_{0}$ on $X$, and we make
the following assumptions concerning the potential $\Phi$ appeared in the Baye's formula below.

\textbf{Assumption 3}: The function $\Phi : X\times Y \rightarrow \mathbb{R}$ satisfies the following:
\begin{enumerate}
  \item For every $\epsilon > 0$, there is an $M \in \mathbb{R}$, such that for all $u\in X$,
  \begin{align*}
  \Phi(x;y) \geq M - \epsilon \|u\|_{X}^{2}.
  \end{align*}
  \item there exist $p > 0$ and for every $r > 0$ a $K_{1} = K_{1}(r) > 0$ such that, for all $x \in X$ and $y \in Y$
  with $|y| < r$,
  \begin{align*}
  \Phi(x;y) \leq K_{1}(1+\|x\|_{X}^{p});
  \end{align*}
  \item for every $r > 0$ there is $K_{2} = K_{2}(r) > 0$ such that, for all $x_{1},x_{2} \in X$ and $y \in Y$ with
  $\max\{ \|x_{1}\|_{X}, \|x_{2}\|_{X}, |y| \} < r$,
  \begin{align*}
  |\Phi(x_{1};y) - \Phi(x_{2};y)| \leq K_{2}\|x_{1}-x_{2}\|_{X};
  \end{align*}
  \item there is $q \geq 0$ and for every $r > 0$ a $K_{3} = K_{3}(r) > 0$ such that, for all $y_{1},y_{2} \in Y$ with
  $\max\{ |y_{1}|, |y_{2}| \} < r$, and for all $x \ in X$,
  \begin{align*}
  |\Phi(x;y_{1}) - \Phi(x;y_{2})| \leq K_{3}(1+\|x\|_{X}^{q})|y_{1}-y_{2}|.
  \end{align*}
\end{enumerate}

Usually, we can not assume $\epsilon \perp x$ in (\ref{3.2approModel}), hence, we assume $(\epsilon,x) \in \mathcal{H}:= Y\times X$ distributed
according to a Gaussian measure $\mathcal{N}((\bar{\epsilon},\bar{x}),\mathcal{C})$.
Denote
\begin{align*}
& \mathcal{C}_{x} = \mathbb{E}(x-\bar{x})\otimes (x-\bar{x}), \quad
\mathcal{C}_{\epsilon} = \mathbb{E}(\epsilon - \bar{\epsilon}) \otimes (\epsilon - \bar{\epsilon}), \\
& \mathcal{C}_{x\epsilon} = \mathbb{E}(x-\bar{x})\otimes (\epsilon-\bar{\epsilon}), \quad
\mathcal{C}_{\epsilon x} = \mathbb{E}(\epsilon-\bar{\epsilon}) \otimes (x-\bar{x}).
\end{align*}
According to Theorem 6.20 in \cite{Bayesian3} (the results may also be found in early study \cite{BayesianEarly}),
we find that $\epsilon | x \sim \mathcal{N}(\bar{\epsilon}_{x},\mathcal{C}_{\epsilon |x})$ where
\begin{align*}
& \bar{\epsilon}_{x} = \bar{\epsilon} + \mathcal{C}_{\epsilon x}\mathcal{C}_{x}^{-1}(x-\bar{x}),    \\
& \mathcal{C}_{\epsilon |x} = \mathcal{C}_{\epsilon} - \mathcal{C}_{\epsilon x}\mathcal{C}_{x}^{-1}\mathcal{C}_{x\epsilon}.
\end{align*}
Define
\begin{align}\label{3.2nota1}
\nu = \epsilon + \eta,
\end{align}
then we have
\begin{align}\label{3.2nota2}
\nu | x = \epsilon | x + \eta,
\end{align}
and $\nu | x \sim \mathcal{N}(\bar{\nu}_{x},\mathcal{C}_{\nu | x})$ where
\begin{align}\label{wuNgongshi}
\begin{split}
& \bar{\nu}_{x} = \bar{\epsilon} + \mathcal{C}_{\epsilon x}\mathcal{C}_{x}^{-1}(x-\bar{x}),   \\
& \mathcal{C}_{\nu | x} = \mathcal{C}_{\eta} + \mathcal{C}_{\epsilon} - \mathcal{C}_{\epsilon x}\mathcal{C}_{x}^{-1}\mathcal{C}_{x\epsilon}.
\end{split}
\end{align}
Thus, we obtain that
\begin{align*}
y|x \sim \tilde{\mathbb{Q}} := \mathcal{N}(\mathcal{G}(x) + \bar{\nu}_{x}, \mathcal{C}_{\nu | x}).
\end{align*}
We assume through out the following that $\tilde{\mathbb{Q}} \ll \mathbb{Q}_{0}$ for $x$ $\mu_{0}$-a.s.
Thus, for some potential $\Phi : X\times Y \rightarrow \mathbb{R}$,
\begin{align}\label{3.2potential}
\frac{d\tilde{\mathbb{Q}}}{d\mathbb{Q}_{0}}(y) = \frac{1}{Z(y)}\exp(-\Phi(x;y)).
\end{align}
Thus, for fixed $x$, $\Phi (x;\cdot) : Y \rightarrow \mathbb{R}$ is measurable.

Define $\varsigma_{0}$ to be the product measure
\begin{align}\label{3.2productOri}
\varsigma_{0}(dx,dy) = \mu_{0}(dx)\mathbb{Q}_{0}(dy).
\end{align}
We assume in what follows that $\Phi(\cdot,\cdot)$ is $\varsigma_{0}$ measurable. Then the random variable
$(x,y)$ is distributed according to measure $\varsigma(dx,dy) = \mu_{0}(dx)\tilde{\mathbb{Q}}(dy)$.
Furthermore, it then follows that $\varsigma \ll \varsigma_{0}$ with
\begin{align}\label{3.2shizi}
\frac{d\varsigma}{d\varsigma_{0}}(x,y) = \frac{1}{Z(y)}\exp(-\Phi(x;y)).
\end{align}
Then we have the following theorem with a similar spirt of Theorem 2.5 in \cite{Bayesian2}.
\begin{theorem}\label{3.2cunzaiTheorem}
Assume that
\begin{enumerate}
  \item $\text{Im}(\mathcal{C}_{\eta}^{1/2}) = \text{Im}(\mathcal{C}_{\nu | x}^{1/2})$ and denote $E := \text{Im}(\mathcal{C}_{\eta}^{1/2})$;
  \item $\mathcal{G}(x) + \bar{\nu}_{x} \in E$;
  \item the operator $T := (\mathcal{C}_{\eta}^{-1/2}\mathcal{C}_{\nu |x}^{1/2})(\mathcal{C}_{\eta}^{-1/2}\mathcal{C}_{\nu |x}^{1/2})^{*} - I$
  is Hilbert-Schmidt in $\bar{E}$.
\end{enumerate}
In addition, assume that $\Phi : X\times Y \rightarrow \mathbb{R}$ is $\varsigma_{0}$ measurable, Assumption 2 holds and that, for $y$
$\mathbb{Q}_{0}$-a.s.,
\begin{align}\label{3.2tiaojian1}
Z(y) := \int_{X}\exp(-\Phi(x;y))\mu_{0}(dx) > 0.
\end{align}
Then the conditional distribution of $x|y$ exists under $\varsigma$, and is denoted by $\mu^{y}$.
Furthermore $\mu^{y} \ll \mu_{0}$ and, for $y$ $\varsigma$-a.s.,
\begin{align}\label{3.2jielun1}
\frac{d\mu^{y}}{d\mu_{0}}(x) = \frac{1}{Z(y)}\exp(-\Phi(x;y)).
\end{align}
Moreover, the measure $\mu^{y}$ is Lipschitz in the data $y$, with respect to the Hellinger distance:
if $\mu^{y}$ and $\mu^{y'}$ are two measures given by (\ref{3.2jielun1}) with data $y$ and $y'$ then there is
$C = C(r) > 0$ such that, for all $y$, $y'$ with $\max\{ |y|,|y'| \} \leq r$,
\begin{align}\label{LipschitzNew1}
d_{\text{Hell}}(\mu^{y},\mu^{y'}) \leq C |y-y'|.
\end{align}
Consequently all polynomially bounded functions of $x \in X$ are continuous in $y$.
In particular the mean and covariance operator are continuous in $y$.
\end{theorem}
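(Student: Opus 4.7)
The plan is to adapt the classical Bayesian well-posedness argument of Cotter--Dashti--Stuart (Theorem 2.5 in the IFDS framework) to the present setting, where the effective noise $\nu = \epsilon + \eta$ has both a mean $\bar{\nu}_x$ and a covariance $\mathcal{C}_{\nu|x}$ that depend on $x$ through the joint Gaussian structure in (\ref{wuNgongshi}). The genuinely new ingredient compared with the independent-noise case is that the Radon--Nikodym derivative $d\tilde{\mathbb{Q}}/d\mathbb{Q}_0$ must be justified via an equivalence of two Gaussians whose covariances differ. First I would invoke the Feldman--Hajek theorem: condition (1) in the hypothesis forces the two Cameron--Martin spaces to agree, condition (2) places the mean shift $\mathcal{G}(x) + \bar{\nu}_x$ inside this common space, and condition (3) is precisely the Hilbert--Schmidt requirement on the covariance discrepancy. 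Together these give $\tilde{\mathbb{Q}} \sim \mathbb{Q}_0$ for $\mu_0$-almost every $x$, yielding the density (\ref{3.2potential}) with an explicit potential $\Phi(x;y)$.

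Next I would assemble the joint measures $\varsigma$ and $\varsigma_0$ as in (\ref{3.2productOri})--(\ref{3.2shizi}) and apply a disintegration / Bayes-for-measures argument to extract the conditional measure $\mu^y$ with density (\ref{3.2jielun1}). Positivity of $Z(y)$ in an integrable sense is assumed in (\ref{3.2tiaojian1}); moreover, combining Assumption 3(1) with Fernique's theorem on the Gaussian $\mu_0$ yields a uniform lower bound $Z(y) \geq C(r) > 0$ on the ball $\{|y|\leq r\}$, which is what will drive the Hellinger estimate.

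For the Lipschitz bound (\ref{LipschitzNew1}), I would write
\begin{align*}
2\,d_{\text{Hell}}(\mu^y,\mu^{y'})^2 = \int_X \left(\frac{e^{-\Phi(x;y)/2}}{\sqrt{Z(y)}} - \frac{e^{-\Phi(x;y')/2}}{\sqrt{Z(y')}}\right)^{\!2} \mu_0(dx),
\end{align*}
and split via $(a-b)^2 \leq 2a^2 + 2b^2$ into (i) a piece containing $|e^{-\Phi(x;y)/2} - e^{-\Phi(x;y')/2}|$, bounded through the elementary inequality $|e^{-a/2}-e^{-b/2}| \leq \tfrac{1}{2}(e^{-a/2}+e^{-b/2})|a-b|$ together with Assumption 3(1) and (4); and (ii) a piece containing $|Z(y)^{-1/2} - Z(y')^{-1/2}|$, bounded similarly after inserting $Z(y) \geq C(r)$. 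Each term reduces to a Gaussian integral of the form $\int (1+\|x\|_X^{2q}) e^{\kappa \|x\|_X^2} \mu_0(dx)$, which is finite by Fernique for small enough $\kappa$. Collecting the estimates gives a constant $C = C(r)$ linear in $|y-y'|$, as required. Finally, continuity of polynomially bounded functionals follows from the generic inequality $|\mathbb{E}^{\mu^y} f - \mathbb{E}^{\mu^{y'}} f| \leq 2\sqrt{\mathbb{E}^{\mu_0}|f|^2}\, d_{\text{Hell}}(\mu^y,\mu^{y'})$, again closing via Fernique, and in particular delivers continuity of mean and covariance.

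The main obstacle is not any individual step but rather verifying that the potential $\Phi(x;y)$ inherits Assumption 3 from the explicit Feldman--Hajek formula, which involves the operator $T$ and quadratic forms in $y - \mathcal{G}(x) - \bar{\nu}_x$ under the inverse of $\mathcal{C}_{\nu|x}$; because $\bar{\nu}_x$ and $\mathcal{C}_{\nu|x}$ both carry $x$-dependence through $\mathcal{C}_{\epsilon x}\mathcal{C}_x^{-1}$, extra care is needed to confirm that the growth and Lipschitz bounds (1)--(4) of Assumption 3 really hold with the constants independent of $x$ in the quantitative form required for Fernique to close the estimates uniformly on balls in $y$.
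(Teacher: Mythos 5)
Your proposal follows essentially the same route as the paper: Feldman--Hajek under hypotheses (1)--(3) to obtain $\tilde{\mathbb{Q}}\ll\mathbb{Q}_{0}$ and the potential $\Phi$, disintegration of $\varsigma$ against $\varsigma_{0}$ (the paper cites Theorem 6.29 of \cite{Bayesian3}) for existence of $\mu^{y}$ with density (\ref{3.2jielun1}), and the standard Hellinger--Lipschitz estimate via Fernique and a lower bound on $Z(y)$ (the paper cites Theorem 4.4 of \cite{Bayesian1}), so your sketch just writes out what the paper delegates to references. One minor correction: the uniform lower bound $Z(y)\geq C(r)>0$ comes from the upper bound on $\Phi$ in Assumption 3(2) restricted to a ball in $X$, not from the lower bound in Assumption 3(1), which instead controls the integrability of the exponential terms.
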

\begin{remark}
Let $\nu$ be a common reference measure of measures $\mu$ and $\mu'$,
the Hellinger distance used in Theorem \ref{3.2cunzaiTheorem} is defined by
\begin{align*}
d_{\text{Hell}}(\mu,\mu') = \sqrt{\left( \frac{1}{2}\int\left( \sqrt{\frac{d\mu}{d\nu}} - \sqrt{\frac{d\mu'}{d\nu}} \right)^{2} d\nu \right)}.
\end{align*}
\end{remark}
\begin{proof}
Because the measure $\mathbb{Q}_{0}$ and the measure $\tilde{\mathbb{Q}}$ both are Gaussian measure, from
Feldman-Hajek theorem \cite{StochasticEquation1}, we can conclude that under conditions (1) to (3), $\tilde{\mathbb{Q}} \ll \mathbb{Q}_{0}$.
From $\tilde{\mathbb{Q}} \ll \mathbb{Q}_{0}$, we notice that $\Phi$ exists.
Note that the positive of $Z$ holds for $y$ $\varsigma_{0}$-almost surely, and hence by absolute continuity
of $\varsigma$ with respect to $\varsigma_{0}$, for $y$ $\varsigma$-almost surely.
Now by Theorem 6.29 in \cite{Bayesian3}, the first result follows.
For the Lipschitz continuity, it could be proved by the method used in the proof of Theorem 4.4 in \cite{Bayesian1}.
The reason is that we impose similar conditions on the potential $\Phi$ and the properties of $\Phi$ is the key point of the proof.
So we omit the details here.
\end{proof}

In the sequel, we consider a simple case that is the operator $\mathcal{C}_{\eta}$ and $\mathcal{C}_{\nu | x}$ commute
with each other. Because $\mathcal{C}_{\eta}$ and $\mathcal{C}_{\nu | x}$ commute, there exists a complete orthonormal
system $(e_{k})$ in $Y$, and sequences $(\lambda_{k})$, $(r_{k})$ of positive numbers such that
\begin{align}\label{3.2zhegnjiao1}
\mathcal{C}_{\eta}e_{k} = \lambda_{k} e_{k}, \quad
\mathcal{C}_{\nu | x}e_{k} = r_{k} e_{k}, \quad k\in \mathbb{N}.
\end{align}
In order to provide a clear verification, we denote
\begin{align}\label{defH}
h = \mathcal{G}_{a}(x) + \bar{\nu}_{x}.
\end{align}
Hence, we have
$\tilde{\mathbb{Q}} = \mathcal{N}(h, \mathcal{C}_{\nu | x})$ and $h = \sum_{k = 1}^{\infty}h_{k}e_{k}$ with
$h_{k} = (h,e_{k})$.
\begin{lemma}\label{3.2jiaohuan1}
Let $\mathcal{C}_{\eta}, \mathcal{C}_{\nu | x} \in L_{1}^{+}(Y)$ be such that
$[\mathcal{C}_{\eta}, \mathcal{C}_{\nu |x}] := \mathcal{C}_{\eta}\mathcal{C}_{\nu |x} - \mathcal{C}_{\nu |x}\mathcal{C}_{\eta} = 0$.
Then $\tilde{\mathbb{Q}}$ and $\mathbb{Q}_{0}$ are equivalent if and only if
\begin{align}\label{3.2dengjialitaojian1}
\sum_{k = 1}^{\infty}\frac{h_{k}^{2}}{\lambda_{k} + r_{k}} < \infty, \quad \text{and} \quad
\sum_{k = 1}^{\infty}\frac{(\lambda_{k}-r_{k})^{2}}{(\lambda_{k}+r_{k})^{2}} < \infty.
\end{align}
\end{lemma}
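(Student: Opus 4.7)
The plan is to invoke the Feldman-Hajek theorem, in the same form already used in the proof of Theorem \ref{3.2cunzaiTheorem}, and to translate each of its three hypotheses into the common eigenbasis $(e_{k})$ provided by the commutativity assumption. Since $[\mathcal{C}_{\eta},\mathcal{C}_{\nu|x}]=0$ and both operators are positive and trace class, they are simultaneously diagonalizable with $\mathcal{C}_{\eta}e_{k}=\lambda_{k}e_{k}$ and $\mathcal{C}_{\nu|x}e_{k}=r_{k}e_{k}$ as recorded in (\ref{3.2zhegnjiao1}). In this basis the Feldman-Hajek operator $T=\mathcal{C}_{\eta}^{-1/2}\mathcal{C}_{\nu|x}\mathcal{C}_{\eta}^{-1/2}-I$ acts diagonally with eigenvalues $r_{k}/\lambda_{k}-1$, so $T$ is Hilbert-Schmidt iff $\sum_{k}(r_{k}/\lambda_{k}-1)^{2}<\infty$; likewise the condition $h\in E=\mathrm{Im}(\mathcal{C}_{\eta}^{1/2})$ reads $\sum_{k}h_{k}^{2}/\lambda_{k}<\infty$.

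For the direction ($\Leftarrow$), I would first note that the second series in (\ref{3.2dengjialitaojian1}) forces $(\lambda_{k}-r_{k})/(\lambda_{k}+r_{k})\to 0$, hence $\lambda_{k}/r_{k}\to 1$. This asymptotic comparability makes $\lambda_{k}+r_{k}$ two-sided comparable to $2\lambda_{k}$ (and to $2r_{k}$) for all large $k$, so it also forces $\mathrm{Im}(\mathcal{C}_{\eta}^{1/2})=\mathrm{Im}(\mathcal{C}_{\nu|x}^{1/2})$, verifying hypothesis (i) of Theorem \ref{3.2cunzaiTheorem}. Under this comparability the first series in (\ref{3.2dengjialitaojian1}) is equivalent to $\sum_{k}h_{k}^{2}/\lambda_{k}<\infty$, which is hypothesis (ii), and the second is equivalent to $\sum_{k}(r_{k}/\lambda_{k}-1)^{2}<\infty$, which is the Hilbert-Schmidt hypothesis (iii). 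Feldman-Hajek then yields $\tilde{\mathbb{Q}}\sim\mathbb{Q}_{0}$.

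For the direction ($\Rightarrow$), equivalence of the two Gaussians gives the three Feldman-Hajek conditions. The Hilbert-Schmidt condition delivers $\sum_{k}(r_{k}/\lambda_{k}-1)^{2}<\infty$, and in particular $r_{k}/\lambda_{k}\to 1$, so once again $\lambda_{k}+r_{k}\asymp 2\lambda_{k}\asymp 2r_{k}$ for large $k$. Rewriting the two converging sums in the symmetric form using this comparability reproduces the two series in (\ref{3.2dengjialitaojian1}).

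The step needing the most care is the equivalence between the symmetric form $(\lambda_{k}-r_{k})^{2}/(\lambda_{k}+r_{k})^{2}$ appearing in the lemma and the Feldman-Hajek form $(r_{k}/\lambda_{k}-1)^{2}$; these differ by the factor $\lambda_{k}^{2}/(\lambda_{k}+r_{k})^{2}$, which is always bounded above by $1$ but is only bounded below once one knows $\lambda_{k}/r_{k}\to 1$. Happily, each side of the equivalence we are proving supplies this asymptotic on its own, so the comparison is clean. As a sanity check on the constants one can alternatively apply Kakutani's dichotomy to the product representations $\mathbb{Q}_{0}=\prod_{k}\mathcal{N}(0,\lambda_{k})$ and $\tilde{\mathbb{Q}}=\prod_{k}\mathcal{N}(h_{k},r_{k})$, compute the one-dimensional Hellinger affinities explicitly, and read both conditions off from positivity of their infinite product.
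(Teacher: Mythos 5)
Your proof is correct, but it takes a genuinely different route from the paper. The paper argues via Kakutani's dichotomy applied to the product representations of $\mathbb{Q}_{0}$ and $\tilde{\mathbb{Q}}$ as countable products of the one-dimensional Gaussians $\mathcal{N}(0,\lambda_{k})$ and $\mathcal{N}(h_{k},r_{k})$ (following Theorem 2.9 of the Da Prato--Zabczyk reference): it writes the one-dimensional Radon--Nikodym densities, computes the Hellinger affinity in closed form, $H(\tilde{\mathbb{Q}},\mathbb{Q}_{0})^{4}=\exp\bigl(-\sum_{k}h_{k}^{2}/(\lambda_{k}+r_{k})\bigr)\prod_{k}\bigl(1-(\lambda_{k}-r_{k})^{2}/(\lambda_{k}+r_{k})^{2}\bigr)$, and reads both series in (\ref{3.2dengjialitaojian1}) off from positivity of this product, so the symmetric form appears automatically and no asymptotic comparison of $\lambda_{k}$ and $r_{k}$ is ever needed. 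You instead invoke Feldman--Hajek directly and diagonalize its three hypotheses in the common eigenbasis, which yields the conditions in the asymmetric form $\sum_{k}h_{k}^{2}/\lambda_{k}<\infty$ and $\sum_{k}(r_{k}/\lambda_{k}-1)^{2}<\infty$, and you then bridge to the symmetric form of the lemma via the observation that either side forces $r_{k}/\lambda_{k}\to 1$, hence $\lambda_{k}+r_{k}\asymp\lambda_{k}\asymp r_{k}$ for large $k$ (with the finitely many exceptional indices harmless, and with one direction needing no comparability at all since $\lambda_{k}+r_{k}\geq\lambda_{k}$). That bridging step is the only genuinely new ingredient of your argument and you handle it correctly, including the point that $r_{k}/\lambda_{k}\to1$ also gives the equality of Cameron--Martin images required by hypothesis (1) of Theorem \ref{3.2cunzaiTheorem}. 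What each approach buys: yours outsources the dichotomy to Feldman--Hajek and so avoids the explicit Hellinger computation, tying the lemma cleanly to the hypotheses of Theorem \ref{3.2cunzaiTheorem}; the paper's is more self-contained at the level of explicit one-dimensional densities and produces the symmetric conditions directly. Note that your closing ``sanity check'' via Kakutani and one-dimensional Hellinger affinities is in fact exactly the paper's proof.
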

\begin{proof}
The proof of this lemma inspired by the methods used in the proof of Theorem 2.9 in \cite{StochasticEquation2}.
Here, we only provide the different parts.
We may write
\begin{align*}
\mathcal{N}(0,\mathcal{C}_{\eta}) = \cheng_{k=1}^{\infty} \mathcal{N}(0,\lambda_{k}), \quad
\mathcal{N}(h,\mathcal{C}_{\nu | x}) = \cheng_{k=1}^{\infty} \mathcal{N}(h_{k}, r_{k}).
\end{align*}
Because
\begin{align*}
\frac{d\mathcal{N}(h_{k},r_{k})}{d\mathcal{N}(0,\lambda_{k})}(x_{k}) = \sqrt{\frac{\lambda_{k}}{r_{k}}}
\exp\left(-\frac{x_{k}^{2}}{2}\left( \frac{\lambda_{k} - r_{k}}{r_{k}\lambda_{k}} \right) + \frac{h_{k}x_{k}}{r_{k}}
- \frac{h_{k}^{2}}{2r_{k}}\right), \quad x_{k}\in \mathbb{R}, \,  k\in \mathbb{N},
\end{align*}
then through a tedious computation, we could obtain that
\begin{align*}
H(\tilde{\mathbb{Q}},\mathbb{Q}_{0})^{4} = \exp\left(-\sum_{k = 1}^{\infty}\frac{h_{k}^{2}}{\lambda_{k}+r_{k}}\right)
\prod_{k = 1}^{\infty} \left( 1-\frac{(\lambda_{k} - r_{k})^{2}}{(\lambda_{k} + r_{k})^{2}} \right).
\end{align*}
Hence, $H(\tilde{\mathbb{Q}},\mathbb{Q}_{0}) > 0$ if and only if
\begin{align*}
\sum_{k = 1}^{\infty}\frac{h_{k}^{2}}{\lambda_{k} + r_{k}} < \infty, \quad \text{and} \quad
\sum_{k = 1}^{\infty}\frac{(\lambda_{k}-r_{k})^{2}}{(\lambda_{k}+r_{k})^{2}} < \infty.
\end{align*}
Now the conclusion follows from the Kakutani theorem \cite{StochasticEquation2}.
\end{proof}

Based on the above lemma, Theorem \ref{3.2cunzaiTheorem} can be modified as follows.
\begin{theorem}\label{3.2cunzaiTheoremTwo}
Assume that $\mathcal{C}_{\eta}, \mathcal{C}_{\nu | x} \in L_{1}^{+}(Y)$ be such that
$[\mathcal{C}_{\eta}, \mathcal{C}_{\nu |x}] := \mathcal{C}_{\eta}\mathcal{C}_{\nu |x} - \mathcal{C}_{\nu |x}\mathcal{C}_{\eta} = 0$.
There exist a complete orthonormal
system $(e_{k})$ in $Y$, and sequences $(\lambda_{k})$, $(r_{k})$ of positive numbers such that
\begin{align*}
\mathcal{C}_{\eta}e_{k} = \lambda_{k} e_{k}, \quad
\mathcal{C}_{\nu | x}e_{k} = r_{k} e_{k}, \quad k\in \mathbb{N},
\end{align*}
and
\begin{align*}
\sum_{k = 1}^{\infty}\frac{h_{k}^{2}}{\lambda_{k} + r_{k}} < \infty, \quad  \quad
\sum_{k = 1}^{\infty}\frac{(\lambda_{k}-r_{k})^{2}}{(\lambda_{k}+r_{k})^{2}} < \infty,
\end{align*}
where $h, h_{k}$ are defined as in (\ref{defH}).
In addition, assume that $\Phi : X\times Y \rightarrow \mathbb{R}$ is $\varsigma_{0}$ measurable, Assumption 2 holds and that, for $y$
$\mathbb{Q}_{0}$-a.s.,
\begin{align}\label{3.2tiaojian2}
Z(y) := \int_{X}\exp(-\Phi(x;y))\mu_{0}(dx) > 0.
\end{align}
Then the conditional distribution of $x|y$ exists under $\varsigma$, and is denoted by $\mu^{y}$.
Furthermore $\mu^{y} \ll \mu_{0}$ and, for $y$ $\varsigma$-a.s.,
\begin{align}\label{3.2jielun2}
\frac{d\mu^{y}}{d\mu_{0}}(x) = \frac{1}{Z(y)}\exp(-\Phi(x;y)).
\end{align}
Moreover, the measure $\mu^{y}$ is Lipschitz in the data $y$, with respect to the Hellinger distance.
\end{theorem}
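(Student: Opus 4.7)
The plan is to reduce the proof to a direct combination of Lemma \ref{3.2jiaohuan1} and the core argument of Theorem \ref{3.2cunzaiTheorem}, using the commutativity hypothesis to replace the abstract Feldman--Hajek criterion by the concrete series conditions already built into the statement.

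First I would invoke Lemma \ref{3.2jiaohuan1} to obtain the equivalence $\tilde{\mathbb{Q}} \sim \mathbb{Q}_0$: the commutativity assumption together with the two summability conditions in the hypothesis are exactly what the Lemma requires. In particular the Radon--Nikodym derivative in (\ref{3.2potential}) exists, so the potential $\Phi(x;\cdot)$ is well-defined for $\mu_0$-almost every $x$, and the $\varsigma_0$-measurability hypothesis on $\Phi$ becomes meaningful.

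With this equivalence in hand I would copy the disintegration step from the proof of Theorem \ref{3.2cunzaiTheorem}: the product measure $\varsigma_0$ and the joint measure $\varsigma$ are related through (\ref{3.2shizi}), so Theorem 6.29 of \cite{Bayesian3} yields that the conditional law of $x|y$ has the claimed density (\ref{3.2jielun2}), provided $Z(y)>0$, which is the standing hypothesis (\ref{3.2tiaojian2}). The Lipschitz continuity in Hellinger distance would then follow from the same estimate as in Theorem \ref{3.2cunzaiTheorem}, which in turn relies only on the four analytic bounds on $\Phi$ gathered in Assumption 3 and on dominated convergence; no property of the covariance operators other than the existence of $\Phi$ enters that part of the argument, so there is nothing to re-verify.

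The main (and only) subtle point, already absorbed into Lemma \ref{3.2jiaohuan1}, is the compatibility of the stated series conditions with the operator-theoretic conditions (1)--(3) of Theorem \ref{3.2cunzaiTheorem} in the commuting case. Condition (2) there asks that $h=\mathcal{G}_a(x)+\bar\nu_x \in E=\text{Im}(\mathcal{C}_\eta^{1/2})$, namely $\sum_k h_k^2/\lambda_k<\infty$, whereas the present hypothesis is $\sum_k h_k^2/(\lambda_k+r_k)<\infty$; but the second series condition $\sum_k(\lambda_k-r_k)^2/(\lambda_k+r_k)^2<\infty$ forces $\lambda_k/r_k\to 1$, so the two square-summability statements coincide up to bounded multiplicative constants, and the translation of conditions is legitimate. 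Beyond this bookkeeping no new analytic content is needed, and the theorem follows.
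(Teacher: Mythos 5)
Your proposal is correct and follows essentially the same route as the paper: the paper obtains Theorem \ref{3.2cunzaiTheoremTwo} precisely by using Lemma \ref{3.2jiaohuan1} (commutativity plus the two series conditions give $\tilde{\mathbb{Q}}\sim\mathbb{Q}_{0}$) in place of the Feldman--Hajek conditions (1)--(3), and then repeating the disintegration and Hellinger--Lipschitz arguments of Theorem \ref{3.2cunzaiTheorem}, which depend only on Assumption 3 and the positivity of $Z(y)$. Your closing remark that $\sum_{k}(\lambda_{k}-r_{k})^{2}/(\lambda_{k}+r_{k})^{2}<\infty$ forces $\lambda_{k}/r_{k}\to 1$, so that $\sum_{k}h_{k}^{2}/(\lambda_{k}+r_{k})<\infty$ is equivalent to $\sum_{k}h_{k}^{2}/\lambda_{k}<\infty$, is accurate but not needed, since the lemma yields the equivalence of measures directly.
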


\begin{remark}
By some small modifications as stated in \cite{Bayesian1},
the above mentioned Theorem \ref{3.2cunzaiTheorem} and Theorem \ref{3.2cunzaiTheoremTwo} still hold when
$H$ is a separable Hilbert space.
\end{remark}

In the last part of this section, we provide an explanation for the relations of the Bayesian methods and the regularization methods.
For this, the MAP estimators and the Onsager-Machlup functional play an important role,
which can be seen from the work \cite{Dunlop2016MAP,MAP_detail,Helin2015Maximum}.
As in \cite{MAP_detail}, we define a function $I : X \rightarrow \mathbb{R}$ by
\begin{align}\label{OMFunctional}
I(x) = \left\{\begin{aligned}
& \Phi(x;y) + \frac{1}{2}\|x-\bar{x}\|_{E}^{2} \quad \text{if }x-\bar{x}\in E, \text{ and} \\
& + \infty \quad\quad\quad\quad\quad\quad\quad\quad \text{else.}
\end{aligned}
\right.
\end{align}
Here, $E$ denotes the Cameron-Martin space of the Gaussian measure $\mu_{0}$ on $X$.
The MAP estimate of a measure $\mu$ can be defined as follows.
\begin{definition}\label{defMAPestimate}
Let
\begin{align*}
M^{\epsilon} = \sup_{x\in X}\mu(B_{\epsilon}(x)).
\end{align*}
Any point $\hat{x}\in X$ satisfying
\begin{align*}
\lim_{\epsilon \rightarrow 0}\frac{\mu(B_{\epsilon}(\hat{x}))}{M^{\epsilon}} = 1
\end{align*}
is a MAP estimate for the measure $\mu$.
\end{definition}
With these definitions, we can show the following theorems.
\begin{theorem}\label{MAP1theorem}
Suppose that Assumption 2 hold. Assume also that there exists an $M\in\mathbb{R}$ such that $\Phi(x;y) \geq M$ for any $x\in X$.
\begin{itemize}
  \item Let $z^{\delta} = \argmax_{z\in X}\mu^{y}(B_{\delta}(z))$. There is a $\bar{z}\in E$ and a subsequence of $\{z^{\delta}\}_{\delta >0}$
  which converges to $\bar{z}$ strongly in $X$.
  \item The limit $\bar{z}$ is a MAP estimator and a minimizer of $I$.
\end{itemize}
\end{theorem}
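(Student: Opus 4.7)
The plan is to adapt the approach of \cite{MAP_detail} to the present setting with state-dependent noise, and to carry it out in three conceptual steps.

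First, I would link $\mu^y$-small balls to $\mu_0$-small balls. By Assumption 3(3), $|\Phi(x;y)-\Phi(z;y)|\le K_2\delta$ for $x\in B_\delta(z)$ with $\|z\|_X$ bounded, which gives the sandwich
\[
\mu^y(B_\delta(z)) = \frac{e^{-\Phi(z;y)}\bigl(1+O(\delta)\bigr)}{Z(y)}\,\mu_0(B_\delta(z)).
\]
Combined with the classical Gaussian small-ball comparison
\[
\lim_{\delta\to 0}\frac{\mu_0(B_\delta(z_1))}{\mu_0(B_\delta(z_2))}
= \exp\!\Bigl(\tfrac{1}{2}\|z_2-\bar{x}\|_E^2 - \tfrac{1}{2}\|z_1-\bar{x}\|_E^2\Bigr),\qquad z_1,z_2\in\bar{x}+E,
\]
this identifies $I$ in \eqref{OMFunctional} as the Onsager--Machlup functional of $\mu^y$.

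Second, since $z^\delta$ maximizes $z\mapsto\mu^y(B_\delta(z))$, the comparison $\mu^y(B_\delta(z^\delta))\ge\mu^y(B_\delta(\bar{x}))$ together with Step~1 yields, in the $\delta\to 0$ limit,
\[
\tfrac{1}{2}\|z^\delta-\bar{x}\|_E^2 \le \Phi(\bar{x};y)-\Phi(z^\delta;y)+o(1) \le \Phi(\bar{x};y)-M+o(1),
\]
so $z^\delta-\bar{x}\in E$ for $\delta$ small and $\|z^\delta-\bar{x}\|_E$ is uniformly bounded. Because $\mathcal{C}_x\in L_1^{+}(X)$, the Cameron--Martin space $E$ embeds compactly in $X$; therefore a subsequence satisfies $z^{\delta_n}\rightharpoonup\bar{z}$ weakly in $E$ and $z^{\delta_n}\to\bar{z}$ strongly in $X$, which is the first bullet of the theorem.

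Third, for any competitor $z\in\bar{x}+E$, comparing $\mu^y(B_\delta(z^\delta))\ge\mu^y(B_\delta(z))$ via Step~1, and invoking continuity of $\Phi(\cdot;y)$ on $X$ (Assumption~3(3)) together with weak lower semicontinuity of $\|\cdot-\bar{x}\|_E$, gives
\[
I(\bar{z}) \le \liminf_{n\to\infty} I(z^{\delta_n}) \le I(z),
\]
so $\bar{z}$ is a minimizer of $I$. A direct comparison of $\mu^y(B_\epsilon(\bar{z}))/M^\epsilon$ with the same asymptotic then shows $\bar{z}$ is a MAP estimator in the sense of Definition~\ref{defMAPestimate}. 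The hard part will be making the small-ball asymptotic of Step~1 uniform in $z$ over bounded subsets of $\bar{x}+E$, so that the limit $\delta\to 0$ can be interchanged with the supremum $M^\delta$ in Definition~\ref{defMAPestimate}; this uniformity is where both the Lipschitz hypothesis Assumption~3(3) and the lower bound $\Phi\ge M$ become essential.
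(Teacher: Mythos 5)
Your outline follows the same strategy as the argument the paper invokes (it defers the proof entirely to \cite{MAP_detail}): identify $I$ as the Onsager--Machlup functional through ratios of small-ball probabilities, then extract a convergent subsequence of the maximizers by a compactness argument in $E$. However, as written there is a genuine gap, and it sits exactly where you yourself flag ``the hard part.'' In Step 2 you apply the Gaussian comparison $\lim_{\delta\to 0}\mu_0(B_\delta(z_1))/\mu_0(B_\delta(z_2))=\exp(\frac12\|z_2-\bar{x}\|_E^2-\frac12\|z_1-\bar{x}\|_E^2)$ at the center $z^{\delta}$. This is illegitimate twice over: first, that comparison is a pointwise asymptotic for \emph{fixed} centers as $\delta\to 0$, so it cannot be applied to centers $z^{\delta}$ that move with $\delta$ without a uniform, non-asymptotic version; second, it is only valid for centers in $\bar{x}+E$, whereas a maximizer of $z\mapsto\mu^{y}(B_\delta(z))$ over $X$ has no reason to lie in the $\mu_0$-null subspace $\bar{x}+E$. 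Consequently the conclusion ``so $z^{\delta}-\bar{x}\in E$ for $\delta$ small and $\|z^{\delta}-\bar{x}\|_E$ is uniformly bounded'' is not derived --- it is essentially the statement that has to be proved. The same circularity affects Step 1 when applied to $z^\delta$: the sandwich $\mu^y(B_\delta(z))=Z(y)^{-1}e^{-\Phi(z;y)}(1+O(\delta))\mu_0(B_\delta(z))$ has constants $K_2(r)$ depending on $\|z\|_X$, so using it at $z^{\delta}$ presupposes $\sup_{\delta}\|z^{\delta}\|_X<\infty$, which is never established; and the $\liminf$ comparison in Step 3 inherits the same defect.

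The content of the proof in \cite{MAP_detail} is precisely the machinery you are missing: quantitative, two-sided bounds on ratios $\mu^{y}(B_\delta(z_1))/\mu^{y}(B_\delta(z_2))$ valid for all small $\delta$ simultaneously (built from Anderson's inequality and Cameron--Martin shifts, with the lower bound $\Phi\ge M$ and the local Lipschitz property of $\Phi$ entering there), which are used first to show that the family $\{z^{\delta}\}$ is bounded in $X$, then to rule out centers drifting away from $E$ (for $z\notin E$ the ratio $\mu_0(B_\delta(z))/\mu_0(B_\delta(0))$ tends to $0$, and one needs a uniform quantitative form of this), and only then to run the compactness and lower-semicontinuity argument you sketch. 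Your Steps 2--3 would become correct once such uniform estimates are in place, and your compactness observation ($\mathcal{C}_x$ trace class implies the unit ball of $E$ is compact in $X$, so weak-$E$ convergence upgrades to strong-$X$ convergence) is fine; but without reproducing those non-asymptotic small-ball lemmas the proposal is an outline of the known strategy rather than a proof of the theorem.
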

\begin{corollary}\label{MAP2theorem}
Under the conditions of Theorem \ref{MAP1theorem}, we have the following.
\begin{itemize}
  \item Any MAP estimator, given by Definition \ref{defMAPestimate}, minimizes the Onsager-Machlup functional $I$.
  \item Any $z^{*}\in E$ which minimizes the Onsager-Machlup functional $I$ is a MAP estimator for measure $\mu^{y}$
  appeared in Theorem \ref{3.2cunzaiTheorem} or Theorem \ref{3.2cunzaiTheoremTwo}.
\end{itemize}
\end{corollary}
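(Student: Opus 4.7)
The strategy is to reduce everything to the Onsager--Machlup asymptotic relation
\begin{align*}
\lim_{\delta \downarrow 0} \frac{\mu^{y}(B_{\delta}(z_{1}))}{\mu^{y}(B_{\delta}(z_{2}))} = \exp\bigl(I(z_{2}) - I(z_{1})\bigr)
\quad \text{for every } z_{1},z_{2} \in \bar{x}+E,
\end{align*}
which is the standard small-ball asymptotic used in Theorem \ref{MAP1theorem}. I would first isolate this asymptotic as a self-contained lemma: using $\frac{d\mu^{y}}{d\mu_{0}} \propto \exp(-\Phi(\cdot;y))$ from Theorem \ref{3.2cunzaiTheorem}, I write
\begin{align*}
\frac{\mu^{y}(B_{\delta}(z_{1}))}{\mu^{y}(B_{\delta}(z_{2}))}
= \frac{\int_{B_{\delta}(z_{1})}e^{-\Phi(x;y)}\mu_{0}(dx)}{\int_{B_{\delta}(z_{2})}e^{-\Phi(x;y)}\mu_{0}(dx)},
\end{align*}
replace $\Phi(x;y)$ by $\Phi(z_{i};y)$ up to an error of order $O(\delta)$ using the local Lipschitz continuity in item (3) of Assumption 3, and then invoke the classical Cameron--Martin small-ball ratio for $\mu_{0}$, which gives the factor $\exp\bigl(\tfrac{1}{2}\|z_{2}-\bar{x}\|_{E}^{2} - \tfrac{1}{2}\|z_{1}-\bar{x}\|_{E}^{2}\bigr)$. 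This delivers the displayed limit.

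For the first bullet, take any MAP estimator $\hat{x}$ in the sense of Definition \ref{defMAPestimate} and let $\bar{z}\in E$ be the limit produced by Theorem \ref{MAP1theorem}, which already minimizes $I$. Because both $\hat{x}$ and $\bar{z}$ satisfy $\mu^{y}(B_{\delta}(\cdot))/M^{\delta}\to 1$, their small-ball probabilities are asymptotically equivalent, so
\begin{align*}
\lim_{\delta\downarrow 0}\frac{\mu^{y}(B_{\delta}(\hat{x}))}{\mu^{y}(B_{\delta}(\bar{z}))} = 1.
\end{align*}
Combined with the Onsager--Machlup relation this forces $\hat{x}-\bar{x}\in E$ (otherwise the ratio would blow up, since Cameron--Martin gives an exponentially vanishing prior mass for translations outside $E$) and then $I(\hat{x})=I(\bar{z})=\inf_{E}I$. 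Hence every MAP estimator is a minimizer of $I$.

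For the second bullet, let $z^{*}\in E$ be a minimizer of $I$. Given any other point $z\in X$, if $z-\bar{x}\in E$ the Onsager--Machlup relation and $I(z)\geq I(z^{*})$ yield
\begin{align*}
\limsup_{\delta\downarrow 0}\frac{\mu^{y}(B_{\delta}(z))}{\mu^{y}(B_{\delta}(z^{*}))}
= \exp\bigl(I(z^{*})-I(z)\bigr) \leq 1,
\end{align*}
while if $z-\bar{x}\notin E$ the Cameron--Martin theorem together with the lower bound $\Phi\geq M$ shows that the same limsup is $0$. Taking the supremum over $z\in X$ gives $\limsup_{\delta\downarrow 0}M^{\delta}/\mu^{y}(B_{\delta}(z^{*}))\leq 1$, and the reverse inequality is obvious from the definition of $M^{\delta}$; so the ratio tends to $1$ and $z^{*}$ is a MAP estimator.

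The main technical obstacle is precisely the uniformity needed to pass the supremum through the limsup in the last step: one has to prove that the Onsager--Machlup asymptotic comparison is uniform in the base point, at least on bounded subsets of $X$, and that the contribution of points far from $z^{*}$ (in particular those outside $\bar{x}+E$) is controlled by the Cameron--Martin small-ball estimate together with the coercive lower bound in item (1) of Assumption 3. This uniformity is handled exactly as in the proof of the analogous result in \cite{MAP_detail}, with the only new input being that $\Phi$ here arises from the non-product likelihood in (\ref{3.2jielun1})--(\ref{3.2jielun2}); the four conditions in Assumption 3 are tailored to give all the estimates needed, so no new difficulty appears beyond bookkeeping.
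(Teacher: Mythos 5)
The paper gives no written proof of this corollary: it simply asserts that the argument is identical to the one in \cite{MAP_detail}, so the benchmark is that reference's proof, whose skeleton (the Onsager--Machlup small-ball ratio on $\bar{x}+E$ plus the vanishing of small-ball ratios off the Cameron--Martin space) you reproduce correctly for the first bullet. The genuine gap is in your second bullet. You establish, for each fixed $z$, the pointwise bound $\limsup_{\delta\downarrow 0}\mu^{y}(B_{\delta}(z))/\mu^{y}(B_{\delta}(z^{*}))\leq 1$, and then ``take the supremum over $z\in X$'' to conclude $\limsup_{\delta\downarrow 0}M^{\delta}/\mu^{y}(B_{\delta}(z^{*}))\leq 1$. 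That interchange of $\sup_{z}$ with $\limsup_{\delta}$ requires uniformity in $z$ which you never establish, and your claim that this uniformity ``is handled exactly as in \cite{MAP_detail}'' is not accurate: the cited proof avoids the issue altogether. There one uses Theorem \ref{MAP1theorem}, which already supplies a point $\bar{z}\in E$ that is simultaneously a MAP estimator and a minimizer of $I$, and writes $\mu^{y}(B_{\delta}(z^{*}))/M^{\delta}=\bigl(\mu^{y}(B_{\delta}(z^{*}))/\mu^{y}(B_{\delta}(\bar{z}))\bigr)\cdot\bigl(\mu^{y}(B_{\delta}(\bar{z}))/M^{\delta}\bigr)$; the first factor tends to $\exp\bigl(I(\bar{z})-I(z^{*})\bigr)=1$ because $I(z^{*})=I(\bar{z})=\inf I$, and the second tends to $1$ because $\bar{z}$ is a MAP estimator. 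No supremum over $z$ and no uniform comparison is needed. Your argument can be repaired by substituting this two-factor comparison, but as written the step does not go through.

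Two secondary points. First, in both bullets the treatment of points $z$ with $z-\bar{x}\notin E$ rests on the statement that $\mu^{y}(B_{\delta}(z))/\mu^{y}(B_{\delta}(z_{2}))\to 0$ for $z_{2}\in\bar{x}+E$; this is not a direct consequence of the Cameron--Martin theorem (which only gives singularity of the translated measure) but a separate, genuinely non-trivial small-ball lemma proved in \cite{MAP_detail} via finite-dimensional projections and Anderson-type inequalities, so your parenthetical justification understates what must be invoked. Second, in the first bullet the ratio $\mu^{y}(B_{\delta}(\hat{x}))/\mu^{y}(B_{\delta}(\bar{z}))$ tends to zero when $\hat{x}-\bar{x}\notin E$ (its reciprocal blows up), so the phrase ``the ratio would blow up'' has the direction reversed; the contradiction with the limit being $1$ is the same, but the statement should be corrected. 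With the small-ball lemma cited explicitly and the second bullet rerouted through $\bar{z}$ as above, the proof matches the one the paper points to.
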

Considering Assumption 2, the proofs of Theorem \ref{MAP1theorem} and Corollary \ref{MAP2theorem} are exactly the same as in \cite{MAP_detail}.
So we only provide these two results.


\subsection{Small error limits}

This section is devoted to a small error limit problem, which could be seen as a result of posterior consistency:
the idea that the posterior concentrates near the truth that give rise to the data in the small error limits.
The studies here are inspired by the work \cite{Bissantz2004Consistency,MAP_detail}.
For notational simplicity, we assume $\bar{x} = 0$ in this section.
We assume $\mathcal{G}$ be the forward operator without model reduction error, $\mathcal{G}_{n}$ be the
forward operator with model reduction error $\frac{1}{n}\epsilon_{n}$ with $n\in \mathbb{N}$, where
$\epsilon_{n} \sim \mathcal{N}(\bar{\epsilon},\mathcal{C}_{\epsilon})$ defined similarly as in Assumption 2.
In the following, we denote $x^{\dag}$ to be the truth.
Still considering $X$ be a separable Hilbert space and $Y = \mathbb{R}^{J}$, the problem can be written as follow
\begin{align}\label{appromodel1}
y_{n} = \mathcal{G}_{n}(x^{\dag}) + \frac{1}{n}\epsilon_{n} + \frac{1}{n}\eta_{n}
\end{align}
for $n\in\mathbb{N}$ and $\eta_{n} \sim \mathbb{Q}_{0} = \mathcal{N}(0,\mathcal{C}_{\eta})$ defined similarly as in Assumption 2.
Similar to (\ref{3.2nota1}) and (\ref{3.2nota2}), we can define $\nu_{n}$, $\nu_{n}|x$. Then we have
$\nu_{n} | x \sim \mathcal{N}(\bar{\nu}_{x}^{n},\mathcal{C}_{\nu | x}^{n})$ where
\begin{align}\label{Ngongshi}
\begin{split}
& \quad \bar{\nu}_{x}^{n} = \frac{1}{n}\left(\bar{\epsilon} + \mathcal{C}_{\epsilon x}\mathcal{C}_{x}^{-1}(x-\bar{x})\right)
= \frac{1}{n}\bar{\nu}_{x},   \\
& \mathcal{C}_{\nu | x}^{n} = \frac{1}{n^{2}}\left(\mathcal{C}_{\eta} + \mathcal{C}_{\epsilon}
- \mathcal{C}_{\epsilon x}\mathcal{C}_{x}^{-1}\mathcal{C}_{x\epsilon}\right) = \frac{1}{n^{2}}\mathcal{C}_{\nu | x}.
\end{split}
\end{align}

Assume $\mu_{0}$ satisfy Assumption 2, we have the following formula for the posterior measure:
\begin{align}\label{poseriorMeasure}
\frac{d\mu^{y_{n}}}{d\mu_{0}}(x) \propto \exp \left( -\frac{n^{2}}{2}|y_{n} - \mathcal{G}(x) - \bar{\nu}_{x}^{n}|_{\mathcal{C}_{\nu|x}}^{2} \right)
\end{align}
If we assume $\mathcal{G}, \mathcal{G}_{n}$ are uniformly Lipschitz continuous on bounded sets, by Theorem \ref{MAP1theorem}
and Corollary \ref{MAP2theorem}, the MAP estimate of the above measure are the minimizers of
\begin{align}\label{MAPFormula1}
I_{n}(x) := \|x\|_{E}^{2} + n^{2} |y_{n}-G_{n}(x) - \bar{\nu}_{x}^{n}|_{\mathcal{C}_{\nu|x}}^{2},
\end{align}
where $E$ denotes the Cameron-Martin space of the Gaussian measure $\mu_{0}$ on $X$ as in the previous section.
With there preparations, we can show the main result of this section as follows.
\begin{theorem}\label{uniformCon}
Assume that $\mathcal{G}_{n},\mathcal{G} : X \rightarrow \mathbb{R}^{J}$ are uniformly Lipschitz on bounded sets and
$x^{\dag} \in E$. For every $x\in E$, we assume
\begin{align}\label{convergeAss}
\lim_{n\rightarrow +\infty}|\mathcal{G}_{n}(x) - \mathcal{G}(x)| = 0.
\end{align}
For every $n \in \mathbb{N}$, let $x_{n}\in E$ be a minimizer of $I_{n}$ given by (\ref{MAPFormula1}).
Then there exist a $x^{*} \in E$ and a subsequence of $\{x_{n}\}_{n\in\mathbb{N}}$ that converges weakly to $x^{*}$ in $E$,
almost surely. For any such $x^{*}$, we have $\mathcal{G}(u^{*}) = \mathcal{G}(u^{\dag})$.
\end{theorem}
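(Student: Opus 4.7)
The plan is to combine the minimizer inequality $I_n(x_n)\le I_n(x^\dagger)$ with a compactness argument in the Cameron--Martin space $E$, using a Borel--Cantelli extraction to cope with the fact that the residual noise is almost surely unbounded along the whole sequence. Substituting $y_n=\mathcal{G}_n(x^\dagger)+n^{-1}(\epsilon_n+\eta_n)$ and $\bar{\nu}_{x^\dagger}^n=n^{-1}\bar{\nu}_{x^\dagger}$ into~(\ref{MAPFormula1}), the inequality $I_n(x_n)\le I_n(x^\dagger)$ becomes
\begin{align*}
\|x_n\|_E^2+n^2|y_n-\mathcal{G}_n(x_n)-\bar{\nu}_{x_n}^n|_{\mathcal{C}_{\nu|x_n}}^2
\le \|x^\dagger\|_E^2+Z_n,\qquad
Z_n:=|\epsilon_n+\eta_n-\bar{\nu}_{x^\dagger}|_{\mathcal{C}_{\nu|x^\dagger}}^2,
\end{align*}
and $\{Z_n\}_{n\ge 1}$ is i.i.d.\ with a fixed, tight, finite-dimensional Gaussian-quadratic law. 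Choosing $M$ so that $\mathbb{P}(Z_n\le M)>0$ and applying the second Borel--Cantelli lemma yields, for almost every sample path, a random subsequence $\{n_k\}$ on which $Z_{n_k}\le M$, and hence $\|x_{n_k}\|_E^2\le\|x^\dagger\|_E^2+M$.

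The next step is to extract a limit. By Assumption~2, $\mathcal{C}_x\in L_1^+(X)$, so $E=\operatorname{Im}(\mathcal{C}_x^{1/2})$ embeds compactly in $X$; passing to a further (still random) subsequence, which I continue to denote $\{x_{n_k}\}$, yields some $x^{*}\in E$ with $x_{n_k}\rightharpoonup x^{*}$ in $E$ and $x_{n_k}\to x^{*}$ strongly in $X$. This delivers the first assertion of the theorem.

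For the identification $\mathcal{G}(x^{*})=\mathcal{G}(x^\dagger)$, the displayed inequality gives $n_k^2|y_{n_k}-\mathcal{G}_{n_k}(x_{n_k})-\bar{\nu}_{x_{n_k}}^{n_k}|^2\le\|x^\dagger\|_E^2+M$, so the bracketed $Y$-vector tends to zero. Since $\bar{\nu}_{x_{n_k}}^{n_k}=n_k^{-1}\bar{\nu}_{x_{n_k}}$ and the map $x\mapsto\bar{\nu}_x$ is continuous, the boundedness of $\{x_{n_k}\}$ forces $\bar{\nu}_{x_{n_k}}^{n_k}\to 0$. Along the chosen subsequence $|\epsilon_{n_k}|+|\eta_{n_k}|$ is bounded (inherited from $Z_{n_k}\le M$), so $y_{n_k}=\mathcal{G}_{n_k}(x^\dagger)+n_k^{-1}(\epsilon_{n_k}+\eta_{n_k})\to\mathcal{G}(x^\dagger)$ by the pointwise convergence~(\ref{convergeAss}). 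Hence $\mathcal{G}_{n_k}(x_{n_k})\to\mathcal{G}(x^\dagger)$. On the other hand, the uniform Lipschitz continuity of $\{\mathcal{G}_n\}$ on bounded sets together with $x_{n_k}\to x^{*}$ in $X$ gives $|\mathcal{G}_{n_k}(x_{n_k})-\mathcal{G}_{n_k}(x^{*})|\to 0$, and~(\ref{convergeAss}) applied at $x^{*}\in E$ gives $\mathcal{G}_{n_k}(x^{*})\to\mathcal{G}(x^{*})$. Combining these equalities identifies $\mathcal{G}(x^{*})=\mathcal{G}(x^\dagger)$.

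The main obstacle is the pathwise control of the noise: because $\{\eta_n\}$ and $\{\epsilon_n\}$ are i.i.d.\ and non-degenerate, $\sup_n(|\eta_n|^2+|\epsilon_n|^2)=\infty$ almost surely, so no deterministic uniform-in-$n$ bound on $\|x_n\|_E$ is available. This is exactly why the conclusion is formulated in subsequential-almost-sure form, and why the argument must rely on Borel--Cantelli to produce a random good subsequence rather than on an estimate valid for every $n$.
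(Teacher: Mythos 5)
Your proposal reaches the conclusion by a genuinely different mechanism than the paper, although the final identification step (residual of order $1/n_k$, $\bar{\nu}^{n_k}_{x_{n_k}}\to 0$, $y_{n_k}\to\mathcal{G}(x^{\dag})$, then uniform Lipschitz continuity plus the compact embedding $E\hookrightarrow X$ and (\ref{convergeAss}) at $x^{*}$) is essentially the same as the end of the paper's argument. The paper divides $I_{n}$ by $n^{2}$, expands the quadratic, absorbs the cross term and the $\mathcal{C}_{\epsilon x}\mathcal{C}_{x}^{-1}x_{n}$ contribution by Young's inequality with a large parameter $m$, and then takes expectations, obtaining the uniform moment bound $\mathbb{E}\|x_{n}\|_{E}^{2}\leq 2\|x^{\dag}\|_{E}^{2}+C$ and $\mathbb{E}|\mathcal{G}_{n}(x^{\dag})-\mathcal{G}_{n}(x_{n})|_{\mathcal{C}_{\nu|x}}^{2}\to 0$; almost-sure statements are then produced by extracting a subsequence with $\mathbb{E}\langle x_{n_{k}},v\rangle_{E}\to\mathbb{E}\langle x^{*},v\rangle_{E}$ (following \cite{MAP_detail}) and upgrading convergence in probability to a.s. convergence along further \emph{deterministic} subsequences. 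You instead keep the quadratic intact (so no Young step or choice of $m$ is needed) and control the noise pathwise; the bound $|\mathcal{C}_{\epsilon x}\mathcal{C}_{x}^{-1}x|_{\mathcal{C}_{\nu|x}}\leq C\|x\|_{E}$ you need for $\bar{\nu}_{x_{n_{k}}}$ is the same ingredient the paper uses in (\ref{bijiantui4}), and your starting inequality $I_{n}(x_{n})\leq I_{n}(x^{\dag})=\|x^{\dag}\|_{E}^{2}+Z_{n}$ is correct.

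Two points, however, do not match the paper's setting. First, your appeal to the second Borel--Cantelli lemma (and the side claim that $\sup_{n}(|\eta_{n}|^{2}+|\epsilon_{n}|^{2})=\infty$ a.s.) presupposes independence of $(\epsilon_{n},\eta_{n})$ across $n$; the paper only prescribes the marginal laws $\epsilon_{n}\sim\mathcal{N}(\bar{\epsilon},\mathcal{C}_{\epsilon})$, $\eta_{n}\sim\mathcal{N}(0,\mathcal{C}_{\eta})$ and never assumes independence in $n$, and its expectation-based proof does not need it. This is repairable without independence: by reverse Fatou, $\mathbb{P}(\limsup_{n}\{Z_{n}\leq M\})\geq\limsup_{n}\mathbb{P}(Z_{n}\leq M)=\mathbb{P}(Z_{1}\leq M)$, and letting $M\to\infty$ gives $\liminf_{n}Z_{n}<\infty$ a.s., which furnishes a random bound $M(\omega)$ and a subsequence with $Z_{n_{k}}\leq M(\omega)$ — enough for the rest of your argument — but as written the step is not justified by the stated hypotheses. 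Second, your subsequence (and hence the limit) is sample-path dependent, so what you actually prove is ``almost surely there exists a subsequence along which \dots'', which is weaker than what the paper's moment bounds deliver, namely a single subsequence, fixed in advance, along which the convergences hold almost surely; if the theorem is read with the subsequence quantified before the a.s. qualifier — which is how the paper's proof proceeds — your pathwise construction does not quite produce it. Apart from these two caveats, the identification $\mathcal{G}(x^{*})=\mathcal{G}(x^{\dag})$ for the limits you construct is sound.
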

\begin{proof}
For two column vectors $a,b \in \mathbb{R}^{J}$, denote
$\langle a,b \rangle_{\mathcal{C}_{\nu|x}} := a^{T}\cdot\mathcal{C}_{\nu|x}^{-1}\cdot b$, where $a^{T}$ represents the transpose of $a$.
Notice (\ref{appromodel1}) and (\ref{Ngongshi}), we obtain
\begin{align*}
I_{n} = & \|x\|_{E}^{2} + n^{2} \left|\mathcal{G}_{n}(x^{\dag})-\mathcal{G}_{n}(x) + \frac{1}{n}\epsilon_{n}-\frac{1}{n}\bar{\epsilon}
+ \frac{1}{n}\eta_{n} - \frac{1}{n}\mathcal{C}_{\epsilon x}\mathcal{C}_{x}^{-1}x\right|_{\mathcal{C}_{\nu|x}}^{2}   \\
= & \|x\|_{E}^{2} + n^{2}|\mathcal{G}_{n}(x^{\dag}) - \mathcal{G}_{n}(x)|_{\mathcal{C}_{\nu|x}}^{2}
+ |\epsilon_{n}-\bar{\epsilon}+\eta_{n}-\mathcal{C}_{\epsilon x}\mathcal{C}_{x}^{-1}x|_{\mathcal{C}_{\nu|x}}^{2} \\
& + 2n \langle\mathcal{G}_{n}(x^{\dag})-\mathcal{G}_{n}(x),
\epsilon_{n}-\bar{\epsilon}+\eta_{n}-\mathcal{C}_{\epsilon x}\mathcal{C}_{x}^{-1}x\rangle_{\mathcal{C}_{\nu|x}}
\end{align*}
Define
\begin{align*}
J_{n} = & \frac{1}{n^{2}}\|x\|_{E}^{2} + |\mathcal{G}_{n}(x^{\dag}) - \mathcal{G}_{n}(x)|_{\mathcal{C}_{\nu|x}}^{2}
+ \frac{1}{n^{2}}|\epsilon_{n}-\bar{\epsilon}+\eta_{n}-\mathcal{C}_{\epsilon x}\mathcal{C}_{x}^{-1}x|_{\mathcal{C}_{\nu|x}}^{2} \\
& + \frac{2}{n}\langle\mathcal{G}_{n}(x^{\dag})-\mathcal{G}_{n}(x),
\epsilon_{n}-\bar{\epsilon}+\eta_{n}-\mathcal{C}_{\epsilon x}\mathcal{C}_{x}^{-1}x\rangle_{\mathcal{C}_{\nu|x}}.
\end{align*}
We have
\begin{align*}
\argmin_{x}I_{n} = \argmin_{x}J_{n}.
\end{align*}
Define $x_{n}\in E$ as follow
\begin{align*}
x_{n} = \argmin_{x\in E}J_{n}(x).
\end{align*}
The existence of $x_{n}$ obviously follows Theorem 5.4 in \cite{Bayesian3}.  By the definition of $x_{n}$, we have
\begin{align*}
\frac{1}{n^{2}}\|x_{n}\|_{E}^{2} & + |\mathcal{G}_{n}(x^{\dag})-\mathcal{G}_{n}(x_{n})|_{\mathcal{C}_{\nu|x}}^{2}
+ \frac{1}{n^{2}}|\epsilon_{n}-\bar{\epsilon}+\eta_{n}-\mathcal{C}_{\epsilon x}\mathcal{C}_{x}^{-1}x_{n}|_{\mathcal{C}_{\nu|x}}^{2} \\
& + \frac{2}{n}\langle\mathcal{G}_{n}(x^{\dag})-\mathcal{G}_{n}(x_{n}),
\epsilon_{n}-\bar{\epsilon}+\eta_{n}-\mathcal{C}_{\epsilon x}\mathcal{C}_{x}^{-1}x_{n}\rangle_{\mathcal{C}_{\nu|x}} \\
& \leq \frac{1}{n^{2}}\|x^{\dag}\|_{E}^{2}
+ \frac{1}{n^{2}}|\epsilon_{n}-\bar{\epsilon}+\eta_{n}-\mathcal{C}_{\epsilon x}\mathcal{C}_{x}^{-1}x^{\dag}|_{\mathcal{C}_{\nu|x}}^{2}.
\end{align*}
Simple calculations yields
\begin{align}\label{bijiantui1}
\begin{split}
\frac{1}{n^{2}}\|x_{n}\|_{E}^{2} + & |\mathcal{G}_{n}(x^{\dag})-\mathcal{G}_{n}(x_{n})|_{\mathcal{C}_{\nu|x}}^{2}
+ \frac{1}{n^{2}}|\epsilon_{n}-\bar{\epsilon}+\eta_{n}-\mathcal{C}_{\epsilon x}\mathcal{C}_{x}^{-1}x_{n}|_{\mathcal{C}_{\nu|x}}^{2} \\
\leq & \frac{1}{n^{2}}\|x^{\dag}\|_{E}^{2}
+ \frac{1}{n^{2}}|\epsilon_{n}-\bar{\epsilon}+\eta_{n}-\mathcal{C}_{\epsilon x}\mathcal{C}_{x}^{-1}x^{\dag}|_{\mathcal{C}_{\nu|x}}^{2} \\
& + \frac{2}{n}|\mathcal{G}_{n}(x^{\dag})-\mathcal{G}_{n}(x_{n})|_{\mathcal{C}_{\nu|x}}
|\epsilon_{n}-\bar{\epsilon}+\eta_{n}-\mathcal{C}_{\epsilon x}\mathcal{C}_{x}^{-1}x_{n}|_{\mathcal{C}_{\nu|x}}.
\end{split}
\end{align}
Using Young's inequality, we have
\begin{align}\label{bijiantui2}
\begin{split}
& \frac{2}{n}|\mathcal{G}_{n}(x^{\dag})-\mathcal{G}_{n}(x_{n})|_{\mathcal{C}_{\nu|x}}
|\epsilon_{n}-\bar{\epsilon}+\eta_{n}-\mathcal{C}_{\epsilon x}\mathcal{C}_{x}^{-1}x_{n}|_{\mathcal{C}_{\nu|x}} \\
\leq & \frac{m-1}{m}|\mathcal{G}_{n}(x^{\dag})-\mathcal{G}_{n}(x_{n})|_{\mathcal{C}_{\nu|x}}^{2}
+ \frac{m}{m-1}\frac{1}{n^{2}}|\epsilon_{n}-\bar{\epsilon}+\eta_{n}-\mathcal{C}_{\epsilon x}\mathcal{C}_{x}^{-1}x_{n}|_{\mathcal{C}_{\nu|x}}^{2}
\end{split}
\end{align}
for a large enough real number $m$ which will be specified later.
Substituting (\ref{bijiantui2}) into (\ref{bijiantui1}), we have
\begin{align}\label{bijiantui3}
\begin{split}
\frac{1}{n^{2}}\|x_{n}\|_{E}^{2} + & \frac{1}{m}|\mathcal{G}_{n}(x^{\dag})-\mathcal{G}_{n}(x_{n})|_{\mathcal{C}_{\nu|x}}^{2}
\leq \frac{1}{n^{2}}|\epsilon_{n}-\bar{\epsilon}+\eta_{n}-\mathcal{C}_{\epsilon x}\mathcal{C}_{x}^{-1}x^{\dag}|_{\mathcal{C}_{\nu|x}}^{2} \\
& + \frac{1}{n^{2}}\|x^{\dag}\|_{E}^{2}
+ \frac{1}{(m-1)n^{2}}|\epsilon_{n}-\bar{\epsilon}+\eta_{n}
-\mathcal{C}_{\epsilon x}\mathcal{C}_{x}^{-1}x_{n}|_{\mathcal{C}_{\nu|x}}^{2}.
\end{split}
\end{align}
Now we concentrate on the third term on the right-hand side of the above inequality.
By simple calculations, we have
\begin{align}\label{bijiantui4}
\begin{split}
& \frac{1}{(m-1)n^{2}}|\epsilon_{n}-\bar{\epsilon}+\eta_{n}-\mathcal{C}_{\epsilon x}\mathcal{C}_{x}^{-1}x_{n}|_{\mathcal{C}_{\nu|x}}^{2} \\
\leq & \frac{2}{m-1}\frac{1}{n^{2}}\left( |\epsilon_{n}-\bar{\epsilon}+\eta_{n}|_{\mathcal{C}_{\nu|x}}^{2} +
|\mathcal{C}_{\epsilon x}\mathcal{C}_{x}^{-1}x_{n}|_{\mathcal{C}_{\nu|x}}^{2} \right)   \\
\leq & \frac{2}{m-1}\frac{1}{n^{2}}|\epsilon_{n}-\bar{\epsilon}+\eta_{n}|_{\mathcal{C}_{\nu|x}}^{2} +
\frac{2}{m-1}\frac{1}{n^{2}}C_{1}\|x_{n}\|_{E}^{2}.
\end{split}
\end{align}
Here, we take $m$ large enough such that
$$\frac{2}{m-1}C_{1} \leq \frac{1}{2}.$$
Then substituting (\ref{bijiantui4}) into (\ref{bijiantui3}), we obtain
\begin{align}\label{bijiantui5}
\begin{split}
\frac{1}{2n^{2}}\|x_{n}\|_{E}^{2} & + \frac{1}{m}|\mathcal{G}_{n}(x^{\dag})-\mathcal{G}_{n}(x_{n})|_{\mathcal{C}_{\nu|x}}^{2}
\leq \frac{2}{(m-1)n^{2}}|\epsilon_{n}-\bar{\epsilon}+\eta_{n}|_{\mathcal{C}_{\nu|x}}^{2}   \\
& + \frac{1}{n^{2}}\|x^{\dag}\|_{E}^{2}
+ \frac{1}{n^{2}}|\epsilon_{n}-\bar{\epsilon}+\eta_{n}-\mathcal{C}_{\epsilon x}\mathcal{C}_{x}^{-1}x^{\dag}|_{\mathcal{C}_{\nu|x}}^{2}.
\end{split}
\end{align}
Taking expectation on both sides of the above inequality, we obtain
\begin{align}\label{bijiantui6}
\begin{split}
\frac{1}{2n^{2}}\mathbb{E}\|x_{n}\|_{E}^{2} + \frac{1}{m}\mathbb{E}|\mathcal{G}_{n}(x^{\dag})-\mathcal{G}_{n}(x_{n})|_{\mathcal{C}_{\nu|x}}^{2}
\leq \frac{1}{n^{2}}\left( \|x^{\dag}\|_{E}^{2} + \frac{2}{m-1}K_{1} + K_{2} \right),
\end{split}
\end{align}
where
\begin{align*}
K_{1} & := \mathbb{E}|\epsilon_{n}-\bar{\epsilon}+\eta_{n}|_{\mathcal{C}_{\nu|x}}^{2}, \\
K_{2} & := \mathbb{E}|\epsilon_{n}-\bar{\epsilon}+\eta_{n}-\mathcal{C}_{\epsilon x}\mathcal{C}_{x}^{-1}x^{\dag}|_{\mathcal{C}_{\nu|x}}^{2}.
\end{align*}
Obviously, $K_{1}$ and $K_{2}$ are bounded and independent of $n$. Hence, (\ref{bijiantui6}) implies that
\begin{align}\label{bijiantui7}
\mathbb{E}|\mathcal{G}_{n}(x^{\dag})-\mathcal{G}_{n}(x_{n})|_{\mathcal{C}_{\nu|x}}^{2} \rightarrow 0 \quad \text{as }n\rightarrow \infty
\end{align}
and
\begin{align}\label{bijiantui8}
\mathbb{E}\|x_{n}\|_{E}^{2} \leq 2\|x^{\dag}\|_{E}^{2} + \frac{4}{m-1}K_{1} + 2K_{2}.
\end{align}
Similar to the proof of (4.4) in \cite{MAP_detail}, from (\ref{bijiantui8}), we obtain that there exist $x^{*}\in E$ and
a subsequence $\{x_{n_{k}(k)}\}_{k\in\mathbb{N}}$ of $\{x_{n}\}_{x\in\mathbb{N}}$ such that
\begin{align}\label{bijiantui9}
\mathbb{E}\langle x_{n_{k}(k)}, v \rangle_{E} \rightarrow \mathbb{E}\langle x^{*}, v \rangle_{E} \quad \text{for any }v\in E.
\end{align}
By (\ref{bijiantui7}), we have $|\mathcal{G}_{n_{k}(k)}(x^{\dag})-G_{n_{k}(k)}(x_{n_{k}(k)})|_{\mathcal{C}_{\nu|x}} \rightarrow 0$
in probability as $k\rightarrow \infty$.
Therefore, there exists a subsequence $\{x_{m(k)}\}$ of $\{x_{n_{k}(k)}\}$ such that
\begin{align*}
\mathcal{G}_{m(k)}(x^{\dag}) - \mathcal{G}_{m(k)}(x_{m(k)}) \rightarrow 0 \quad \text{a.s. as }k \rightarrow \infty.
\end{align*}
By our hypothesis, we know that $\mathcal{G}_{m(k)}(x^{\dag}) \rightarrow \mathcal{G}(x^{\dag})$ as $k\rightarrow\infty$.
Hence, we have
\begin{align*}
\mathcal{G}_{m(k)}(x_{m(k)}) \rightarrow \mathcal{G}(x^{\dag}) \quad \text{a.s. as }k \rightarrow \infty.
\end{align*}
From (\ref{bijiantui9}), we obtain $\langle x_{m(k)}-x^{*}, v \rangle_{E} \rightarrow 0$ in probability as $k\rightarrow\infty$,
and so there exists a subsequence $\{x_{\hat{m}(k)}\}$ of $\{x_{m(k)}\}$ such that $x_{\hat{m}(k)}$ converges weakly to $x^{*}$
in $E$ almost surely as $k \rightarrow \infty$. Because $E$ is compactly embedded in $X$, this implies that
$x_{\hat{m}(k)}\rightarrow x^{*}$ in $X$ almost surely as $k\rightarrow\infty$.
Since
\begin{align*}
|\mathcal{G}(x^{*}) - \mathcal{G}_{\hat{m}(k)}(x_{\hat{m}(k)})|
\leq |\mathcal{G}(x^{*}) - \mathcal{G}_{\hat{m}(k)}(x^{*})|
+ |\mathcal{G}_{\hat{m}(k)}(x^{*}) - \mathcal{G}_{\hat{m}(k)}(x_{\hat{m}(k)})|,
\end{align*}
by hypothesis (\ref{convergeAss}) and $\mathcal{G}_{n}$ are uniformly Lipschitz bounded, we obtain
\begin{align*}
\mathcal{G}_{\hat{m}(k)}(x_{\hat{m}(k)}) \rightarrow \mathcal{G}(x^{*}) \quad \text{a.s. as }k \rightarrow \infty.
\end{align*}
Thus, the proof has been finished.
\end{proof}

In the above theorem, we assumed the truth $x^{\dag}$ belongs to the Cameron-Martin space $E$.
We could show a weaker convergence result when $x^{\dag}$ just belongs to $X$.

\begin{theorem}\label{weakConvergence}
Suppose that $\mathcal{G}_{n}$, $\mathcal{G}$ and $x_{n}$ satisfy the assumptions of Theorem \ref{uniformCon},
and that $x^{\dag}\in X$. Then there exists a subsequence of $\{\mathcal{G}_{n}(x_{n})\}_{n\in\mathbb{N}}$ converging
to $\mathcal{G}(x^{\dag})$ almost surely.
\end{theorem}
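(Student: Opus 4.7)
The plan is to reduce the theorem to Theorem~\ref{uniformCon} via a density/approximation argument. Since $x^{\dag}$ lies in $X$ but possibly not in the Cameron-Martin space $E$, we cannot plug $x^{\dag}$ directly into the test-function role it played in the proof of Theorem~\ref{uniformCon}. However, for a nondegenerate Gaussian prior $\mu_{0}$ on $X$, the Cameron-Martin space $E = \operatorname{Im}(\mathcal{C}_{x}^{1/2})$ is dense in $X$, so we may pick a sequence $\{\tilde{x}_{k}\}_{k\in\mathbb{N}}\subset E$ with $\tilde{x}_{k}\to x^{\dag}$ in $X$. The idea is to exploit the minimizing property of $x_{n}$ not against $x^{\dag}$ but against these approximants, i.e.\ $J_{n}(x_{n})\leq J_{n}(\tilde{x}_{k})$ for every $k$ and $n$.

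First I would expand $J_{n}(\tilde{x}_{k})$ in the manner of (\ref{bijiantui1}), noticing that the role previously played by $\|x^{\dag}\|_{E}^{2}$ is now played by $\|\tilde{x}_{k}\|_{E}^{2}$ (finite since $\tilde{x}_{k}\in E$), at the cost of introducing the extra discrepancy $|\mathcal{G}_{n}(x^{\dag})-\mathcal{G}_{n}(\tilde{x}_{k})|_{\mathcal{C}_{\nu|x}}^{2}$ in the fidelity term. Repeating verbatim the Young inequality splitting (\ref{bijiantui2})--(\ref{bijiantui5}) with $\tilde{x}_{k}$ in place of $x^{\dag}$ and taking expectations as in (\ref{bijiantui6}), I expect to arrive at an estimate of the shape
\begin{align*}
\frac{1}{2n^{2}}\mathbb{E}\|x_{n}\|_{E}^{2} + \frac{1}{m}\mathbb{E}|\mathcal{G}_{n}(x^{\dag})-\mathcal{G}_{n}(x_{n})|_{\mathcal{C}_{\nu|x}}^{2}
\leq \frac{C_{k}}{n^{2}} + C_{0}\,|\mathcal{G}_{n}(x^{\dag})-\mathcal{G}_{n}(\tilde{x}_{k})|_{\mathcal{C}_{\nu|x}}^{2},
\end{align*}
where $C_{k}$ depends on $\|\tilde{x}_{k}\|_{E}$ (and the moments $K_{1},K_{2}$ of the noise terms), while $C_{0}$ is absolute.

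Next I would invoke the uniform Lipschitz hypothesis on $\{\mathcal{G}_{n}\}$ over bounded sets of $X$: since $\tilde{x}_{k}$ and $x^{\dag}$ both lie in a fixed ball of $X$ (once $k$ is large), there is $L$ independent of $n$ with $|\mathcal{G}_{n}(x^{\dag})-\mathcal{G}_{n}(\tilde{x}_{k})|\leq L\|x^{\dag}-\tilde{x}_{k}\|_{X}$. Therefore, for each fixed $k$, taking $\limsup_{n\to\infty}$ in the displayed estimate yields
\begin{align*}
\limsup_{n\to\infty}\mathbb{E}|\mathcal{G}_{n}(x^{\dag})-\mathcal{G}_{n}(x_{n})|_{\mathcal{C}_{\nu|x}}^{2}
\leq m C_{0} L^{2}\|x^{\dag}-\tilde{x}_{k}\|_{X}^{2}.
\end{align*}
Letting $k\to\infty$ and using $\tilde{x}_{k}\to x^{\dag}$ in $X$ forces this $\limsup$ to be zero, so $\mathcal{G}_{n}(x^{\dag})-\mathcal{G}_{n}(x_{n})\to 0$ in $L^{2}$, hence in probability. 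Extracting an almost-sure convergent subsequence and combining with the standing hypothesis (\ref{convergeAss}) that $\mathcal{G}_{n}(x^{\dag})\to\mathcal{G}(x^{\dag})$ yields $\mathcal{G}_{n_{k}}(x_{n_{k}})\to\mathcal{G}(x^{\dag})$ almost surely along a subsequence, which is the claim.

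The main technical obstacle is the density step: one has to ensure the approximants $\tilde{x}_{k}\in E$ exist with $\tilde{x}_{k}\to x^{\dag}$ in $X$, which is a standard property of Gaussian measures on Hilbert spaces provided $\mathcal{C}_{x}$ has dense range (one is implicitly assuming the prior has full support, otherwise one cannot hope to recover arbitrary $x^{\dag}\in X$). The minor bookkeeping difficulty is that the constant $C_{k}$ in front of $n^{-2}$ blows up as $k\to\infty$, so it is essential to take the limit in $n$ \emph{before} the limit in $k$; this explains why one only obtains weak (subsequential, a.s.) convergence of $\mathcal{G}_{n}(x_{n})$ to $\mathcal{G}(x^{\dag})$, rather than the weak-$E$ convergence of $x_{n}$ itself established in Theorem~\ref{uniformCon}.
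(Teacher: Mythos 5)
Your proposal is correct and is essentially the argument the paper itself intends: the paper omits the proof and defers to Corollary 4.3 of \cite{MAP_detail}, whose argument is exactly your density scheme --- approximate $x^{\dag}\in X$ by elements $\tilde{x}_{k}$ of the (dense, compactly embedded) Cameron--Martin space $E$, use the minimizing property of $x_{n}$ against $\tilde{x}_{k}$ together with the uniform Lipschitz bound and the Young-inequality splitting of (\ref{bijiantui2})--(\ref{bijiantui6}), and send $n\to\infty$ before $k\to\infty$. The one small point to tidy is the final step: since (\ref{convergeAss}) is assumed only for $x\in E$, the convergence $\mathcal{G}_{n}(x^{\dag})\to\mathcal{G}(x^{\dag})$ for $x^{\dag}\in X\setminus E$ is not literally the hypothesis, but it follows from the triangle inequality $|\mathcal{G}_{n}(x^{\dag})-\mathcal{G}(x^{\dag})|\leq 2L\|x^{\dag}-\tilde{x}_{k}\|_{X}+|\mathcal{G}_{n}(\tilde{x}_{k})-\mathcal{G}(\tilde{x}_{k})|$, i.e.\ from the same density-plus-uniform-Lipschitz tools you already deploy.
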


We show the proof of Theorem \ref{uniformCon} in detail. Combining the calculations above with the
arguments used in the proof of Corollary 4.3 in \cite{MAP_detail}, it is not difficult to write down the complete
proof of Theorem \ref{weakConvergence}. For concise, we omit the details here.

\subsection{Apply to an inverse scattering problem}

Before going further, we provide a hypothesis on the covariance operator.

\textbf{Assumption 4}: The operator $A$, densely defined on the Hilbert space $\mathcal{H} = L^{2}(B_{R};\mathbb{R}^{n})$, satisfies
the following properties:
\begin{enumerate}
  \item $A$ is positive-definite, self-adjoint and invertible;
  \item the eigenfunctions $\{ \phi_{j} \}_{j \in \mathbb{N}}$ of $A$, form an orthonormal basis for $\mathcal{H}$;
  \item there are $C^{\pm} > 0$ such that the eigenvalues satisfy $\alpha_{j} \approx j^{2/n}$, for all $j \in \mathbb{N}$;
  \item there is $C > 0$ such that
  \begin{align*}
  \sup_{j\in\mathbb{N}}\left( \|\phi_{j}\|_{L^{\infty}} + \frac{1}{j^{1/n}}\text{Lip}(\phi_{j}) \right) \leq C.
  \end{align*}
\end{enumerate}

\subsubsection{Without model reduction error}\label{3.2yingyong}

As a warm up, let us consider the case without model reduction error, which can be covered by the theory developed in \cite{Bayesian2}.
Let $B_{R} \subset \mathbb{R}^{2}$ be the ball mentioned in Section \ref{2.1lossmain}.
We set $X = C_{u}(B_{R})$, define $V := H^{1}(B_{R})$. Let $\ell_{j}$ with $j = 1,2,\cdots,J$ are linear functionals on $V$,
that means $\ell_{j} \in V^{*}$ where $V^{*}$ is the dual space of $V$.
Define
\begin{align}\label{3.1defineQ}
\tilde{q}(x) := \log(1+q(x)),
\end{align}
and as in Section \ref{2.1lossmain}, we denote $u^{s}(x) = \mathfrak{S}(e^{\tilde{q}} - 1)u^{\text{inc}}$.
According to Theorem \ref{2.1generalK}, we may know that $u^{s} \in H^{1}(B_{R})$.
Hence, in our setting, the unknown function $x$ should be function $\tilde{q}$ and the observation operator could be defined as follows:
\begin{align}\label{3.1forwardOpe}
\mathcal{G}(\tilde{q}) = \{\mathcal{G}_{j}(\tilde{q})\}_{j=1}^{J} := \{ \ell_{j}(\mathfrak{S}(e^{\tilde{q}} - 1)u^{\text{inc}}) \}_{j=1}^{J}.
\end{align}
We take a prior on $\tilde{q}$ to be the measure $\mathcal{N}(0,A^{-s})$ with $s > 1$ where $A$ is an operator satisfy Assumption 4 with $n = 1$.
From Theorem 2.18 in \cite{Bayesian1}, we obtain that $\mu_{0}(X) = 1$.

Denote $\mathbb{Q}_{0} = \mathcal{N}(0,\Gamma)$, $\mathbb{Q}_{\tilde{q}} = \mathcal{N}(\mathcal{G}(\tilde{q}),\Gamma)$.
Taking $B_{1}$ as a unit ball in $X$.
Since
\begin{align}\label{3.1yanzheng}
|\mathcal{G}(\tilde{q})| \leq \|\mathfrak{S}(e^{\tilde{q}}-1)u^{\text{inc}}\|_{H^{1}} \leq C < \infty,
\end{align}
and $\eta \sim \mathcal{N}(0,\Gamma)$, noting that $y$ is $\mathbb{Q}_{0}$-a.s. finite, we have for some $M = M(y) < \infty$
\begin{align*}
\sup_{\tilde{q}\in B_{1}} \frac{1}{2}|\Gamma^{-1/2}(y-\mathcal{G}(\tilde{q}))|^{2}  < M.
\end{align*}
Denote $Z = \int_{X}\exp(-\Phi(x;y)) \mu_{0}(dx)$, by Theorem 6.28 in \cite{Bayesian2}, we know that
\begin{align*}
Z \geq \int_{B_{1}} \exp(-M)\mu_{0}(d\tilde{q}) = \exp(-M)\mu_{0}(B_{1}) > 0.
\end{align*}
Thus, by Theorem 2.1 in \cite{Bayesian2}, we obtain
\begin{align*}
\frac{d\mu^{y}}{d\mu_{0}}(\tilde{q}) = \frac{1}{Z(y)}\exp(-\Phi(\tilde{q};y)),
\end{align*}
where
\begin{align}\label{3.1formofPhi}
\Phi(\tilde{q};y) = \frac{1}{2}|\Gamma^{-1/2}(y-\mathcal{G}(\tilde{q}))|^{2}.
\end{align}
Considering (\ref{3.1yanzheng}) and Theorem \ref{2.1LipCon}, we easily verified that $\Phi$ in (\ref{3.1formofPhi})
satisfies Assumption 3. Hence, we actually proved the following theorem.
\begin{theorem}\label{3.1mainTheorem}
For the two-dimensional inverse scattering problem related with the loss-dominated fractional Helmholtz equation
(problem (\ref{3.1jiaformula}) with $\mathcal{G}$ given by (\ref{3.1forwardOpe})), if we assume
$\tilde{q} = \log(1+q) \sim \mu_{0}$ where $\mu_{0} = \mathcal{N}(0,A^{-s})$ with $s>1$. In addition, we assume
$\eta \in \mathbb{R}^{J}$, $\eta \sim \mathbb{Q}_{0}$ where $\mathbb{Q}_{0} = \mathcal{N}(0,\Gamma)$.
Then the posterior measure $\mu^{y}$ exists and absolutely continuous with respect to $\mu_{0}$ with Randon-Nikodym derivative given by
\begin{align*}
\frac{d\mu^{y}}{d\mu_{0}}(\tilde{q}) = \frac{1}{Z(y)}\exp(-\Phi(\tilde{q};y)),
\end{align*}
where
\begin{align}\label{3.1formofPhi2}
\Phi(\tilde{q};y) = \frac{1}{2}|\Gamma^{-1/2}(y-\mathcal{G}(\tilde{q}))|^{2},\quad Z(y) = \int_{C_{u}(B_{R})}\exp(-\Phi(\tilde{q};y)) \mu_{0}(d\tilde{q}),
\end{align}
In addition, the measure $\mu^{y}$ is continuous in the Hellinger metric with respect to the data $y$.
\end{theorem}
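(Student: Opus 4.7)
The hypotheses here match the classical Bayesian inverse problem framework of \cite{Bayesian2} without model reduction error, so the plan is to verify the standard hypotheses (essentially Assumption 3 on the negative log-likelihood, plus positivity of the normalizing constant) and then invoke Theorem 2.1 and the Hellinger continuity result from \cite{Bayesian2}. The more general Theorem \ref{3.2cunzaiTheorem} proved earlier in this section is not needed for this baseline application, because here the noise is genuinely independent of the unknown and $\bar\nu_x=0$, $\mathcal{C}_{\nu\mid x}=\Gamma$.

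I would proceed in four steps. First, check that $\mu_0$ is supported on $X = C_u(B_R)$: with $A$ satisfying Assumption 4 in dimension $n=1$ and exponent $s>1$, Theorem 2.18 in \cite{Bayesian1} gives $\mu_0(X)=1$, so $X$ is a legitimate state space on which to run the argument. Second, transfer the PDE estimates of Section \ref{2.1lossmain} to the observation operator: Theorem \ref{2.1generalK} yields a uniform bound $\|\mathfrak{S}(e^{\tilde q}-1)u^{\mathrm{inc}}\|_{H^1(B_R)} \leq C$ and Theorem \ref{2.1LipCon} supplies a Lipschitz estimate for $\mathfrak{S}$ in the coefficient, with constants depending on $\|e^{\tilde q}-1\|_{L^\infty}$. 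Composing with $\ell_j\in V^*$ transfers both bounds to $\mathcal{G}:X\to\mathbb{R}^J$.

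Third, plug these bounds into $\Phi(\tilde q;y) = \tfrac12 |\Gamma^{-1/2}(y-\mathcal{G}(\tilde q))|^2$ to verify Assumption 3. Non-negativity immediately gives condition (1) with $M=0$ and arbitrary $\epsilon$. Expanding the square and using the uniform bound on $|\mathcal{G}(\tilde q)|$ over the ball of radius $r$ in $X$ yields the polynomial upper bound (2); the local Lipschitz estimate in $\tilde q$ (condition (3)) follows from Theorem \ref{2.1LipCon} combined with $\|e^{\tilde q_1}-e^{\tilde q_2}\|_{L^\infty}\leq e^{r}\|\tilde q_1 - \tilde q_2\|_{C_u(B_R)}$; and $\Phi$ is affine in $y$ with coefficients bounded on bounded sets of $\tilde q$, giving (4). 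Fourth, verify $Z(y)>0$ almost surely: on the unit ball $B_1\subset X$ the uniform estimate yields $\Phi(\tilde q;y)\leq M(y)<\infty$, and $\mu_0(B_1)>0$ by full support of the Gaussian, so $Z(y)\geq e^{-M(y)}\mu_0(B_1)>0$ for $\mathbb{Q}_0$-a.e.\ $y$.

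The only genuine subtlety is bookkeeping around the reparametrization $q=e^{\tilde q}-1$: the exponential converts sup-norm bounds on $\tilde q$ into sup-norm bounds on $q$, and the PDE constants in Theorems \ref{2.1generalK} and \ref{2.1LipCon} grow exponentially in $\|\tilde q\|_X$. Because Assumption 3 only asks for polynomial growth \emph{on each bounded set}, with the constants $K_1(r), K_2(r), K_3(r)$ allowed to depend on $r$, this exponential dependence is absorbed without trouble. Once Assumption 3 is verified, Theorem 2.1 of \cite{Bayesian2} delivers the Bayes formula and the Radon--Nikodym derivative displayed in the statement, and the Hellinger Lipschitz dependence on $y$ follows from Theorem 4.2 of \cite{Bayesian2} (the same estimate argued in the proof of Theorem \ref{3.2cunzaiTheorem} above).
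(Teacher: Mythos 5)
Your proposal follows essentially the same route as the paper's own proof: support of the prior on $C_{u}(B_{R})$ via Theorem 2.18 of \cite{Bayesian1}, boundedness and Lipschitz continuity of $\mathcal{G}$ inherited from Theorems \ref{2.1generalK} and \ref{2.1LipCon} through the functionals $\ell_{j}$, positivity of $Z(y)$ by restricting to the unit ball, and then Theorem 2.1 of \cite{Bayesian2} together with the standard Hellinger estimate. The one caveat (shared with the paper, which also only asserts that Assumption 3 is ``easily verified'') is that conditions (2) and (4) of Assumption 3 require growth bounds polynomial in $\|\tilde{q}\|_{X}$ for \emph{all} $\tilde{q}\in X$, not merely on bounded sets, so your remark that the exponential dependence of the PDE constants is automatically ``absorbed'' is slightly imprecise, but this does not alter the substance or the structure of the argument.
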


Apart from this well-posedness result, the approximation results \cite{Bayesian6} and the MAP estimators results \cite{MAP_detail}
can be obtained under the aforementioned setting.

\subsubsection{With model reduction error}

For the fractional Helmholtz equation in some unbounded domain, we usually need to calculate it by adding some artificial boundary conditions
(e.g., absorbing boundary conditions or perfectly matched layer methods).
As a simple illustration, we will analyze absorbing boundary conditions with the following form:
\begin{align}\label{3.2boundary}
\partial_{\mathbf{n}}u^{s} = i k u^{s} \quad \text{on }\partial D,
\end{align}
where $D \subset \mathbb{R}^{n}$ is a bounded Lipschitz domain.
With this boundary condition, our problem becomes
\begin{align}\label{3.2boundedFraProFinal}
\left\{
\begin{aligned}
& \Delta u^{s} + i\omega\tau A_{L}u^{s} + k^{2}(1+q)u^{s} = (-k^{2}q-i\omega\tau k^{2\gamma+1})u^{\text{inc}} \quad \text{in }D   \\
& \partial_{\mathbf{n}}u^{s} = i k u^{s}, \quad \text{on }\partial D,
\end{aligned}
\right.
\end{align}
where $\supp(q)\subset D$.
As in Section \ref{2.1lossmain}, we denote $u^{s} = \mathfrak{S}_{a}(q)u^{\text{inc}}$.
The operator $\mathfrak{S}_{a}$ and the operator $\mathfrak{S}$ in Section \ref{2.1lossmain} will be similar if the domain $D$ is large enough.
For the operator $\mathfrak{S}_{a}$, Theorem \ref{2.1smallKexistence}, Theorem \ref{2.1generalK} and Theorem \ref{2.1LipCon}
can be established similarly (actually, the proof will be simpler).
Denote $\tilde{\epsilon} = \mathfrak{S}_{a}(q)u^{\text{inc}} - \mathfrak{S}(q)u^{\text{inc}}$, then $\tilde{\epsilon}$ means the
system reduction error brought by the absorbing boundary condition.

Similar to Subsection \ref{3.2yingyong}, define $V := H^{1}(D)$. Let $\ell_{j}$ with $j = 1,2,\cdots,J$ are linear functionals on $V$,
that means $\ell_{j} \in V^{*}$ where $V^{*}$ is the dual space of $V$.
Define
\begin{align}\label{3.2defineQ}
\tilde{q}(x) := \log(1+q(x)),
\end{align}
then the forward operator will be defined as follows:
\begin{align}\label{3.2forwardOpe}
\mathcal{G}_{a}(\tilde{q}) = \{\mathcal{G}_{a}^{j}(\tilde{q})\}_{j=1}^{J} := \{ \ell_{j}(\mathfrak{S}_{a}(e^{\tilde{q}} - 1)u^{\text{inc}}) \}_{j=1}^{J}.
\end{align}
The system reduction error can be defined as
\begin{align}\label{3.2systemError}
\epsilon = \{ \ell_{j}(\tilde{\epsilon}) \}_{j=1}^{J} =
\{ \ell_{j}({\mathfrak{S}_{a}(q)u^{\text{inc}} - \mathfrak{S}(q)u^{\text{inc}}}) \}_{j=1}^{J}.
\end{align}
Based on these considerations, our model can be presented as follows:
\begin{align}\label{3.2allpyModel}
y = \mathcal{G}_{a}(\tilde{q}) + \epsilon + \eta,
\end{align}
where $\epsilon, \eta \in \mathbb{R}^{J}$.
In our setting, the covariance operator $\mathcal{C}_{\eta}$ and $\mathcal{C}_{\nu | \tilde{q}}$
(in our setting we change $x$ to $\tilde{q}$) are symmetric matrix.
Hence, we could obtain the following form of potential $\Phi$
\begin{align}\label{3.2applyPotential}
\Phi(\tilde{q};y) = \frac{1}{2}|\mathcal{C}_{\nu | \tilde{q}}^{-1/2}(y - \mathcal{G}_{a}(\tilde{q}) - \bar{\nu}_{\tilde{q}})|^{2}.
\end{align}
Taking $s>0$ is a small positive number, $\mu_{0} = \mathcal{N}(0,A^{-2(s+1)})$ and $X = H^{1+s}(D)$,
then by Lemma 6.27 in \cite{Bayesian3} we conclude that $\mu_{0}(X) = 1$ if $A$ satisfies Assumption 4.

\begin{theorem}\label{3.2mainTheorem}
For the two-dimensional inverse scattering problem concerned with the loss-dominated fractional Helmholtz equation
with absorbing boundary condition (problem (\ref{3.2boundedFraProFinal}) with $\mathcal{G}_{a}$ given by (\ref{3.2forwardOpe})), if we assume
$\tilde{q} = \log(1+q) \sim \mu_{0}$ where $\mu_{0} = \mathcal{N}(0,A^{-2(s+1)})$ with $s>0$. In addition, we assume
$\eta \in \mathbb{R}^{J}$, $\eta \sim \mathbb{Q}_{0}$ where $\mathbb{Q}_{0} = \mathcal{N}(0,\mathcal{C}_{\eta})$,
$(\epsilon,\tilde{q}) \in \mathcal{H}:= \mathbb{R}^{J}\times H^{1+s}(D)$ distributed
according to a Gaussian measure $\mathcal{N}((\bar{\epsilon},0),\mathcal{C})$.
Denote $\nu$ and $\nu|\tilde{q}$ have same meaning with (\ref{3.2nota1}) and (\ref{3.2nota2}).
Then the posterior measure $\mu^{y}$ exists and absolutely continuous with respect to $\mu_{0}$ with Randon-Nikodym derivative given by
\begin{align*}
\frac{d\mu^{y}}{d\mu_{0}}(\tilde{q}) = \frac{1}{Z(y)}\exp(-\Phi(\tilde{q};y)),
\end{align*}
where
\begin{align}\label{3.2formofPhi2}
\Phi(\tilde{q};y) = \frac{1}{2}|\mathcal{C}_{\nu | \tilde{q}}^{-1/2}(y - \mathcal{G}_{a}(\tilde{q}) - \bar{\nu}_{\tilde{q}})|^{2},
\end{align}
and
\begin{align*}
Z(y) = \int_{H^{1+s}(D)}\exp(-\Phi(\tilde{q};y))\mu_{0}(d\tilde{q}).
\end{align*}
In addition, the measure $\mu^{y}$ is continuous in the Hellinger metric with respect to the data $y$.
\end{theorem}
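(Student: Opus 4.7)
The plan is to invoke Theorem \ref{3.2cunzaiTheorem} with $X = H^{1+s}(D)$, $Y = \mathbb{R}^{J}$, prior $\mu_{0} = \mathcal{N}(0, A^{-2(s+1)})$, and potential given by (\ref{3.2formofPhi2}). Several of the hypotheses simplify drastically because $Y$ is finite-dimensional: as long as $\mathcal{C}_{\eta}$ and $\mathcal{C}_{\nu|\tilde{q}}$ are non-degenerate $J\times J$ matrices (which is inherited from the non-degeneracy of the joint covariance $\mathcal{C}$), the Feldman--Hajek conditions (1)--(3) of Theorem \ref{3.2cunzaiTheorem} are automatic. Note also that by (\ref{wuNgongshi}) the conditional covariance $\mathcal{C}_{\nu|\tilde{q}}$ is in fact independent of $\tilde{q}$, while the conditional mean $\bar{\nu}_{\tilde{q}} = \bar{\epsilon} + L\tilde{q}$ depends linearly on $\tilde{q}$ through the bounded operator $L := \mathcal{C}_{\epsilon\tilde{q}}\mathcal{C}_{\tilde{q}}^{-1}$; the $\tilde{q}$-independent log-determinant factor and the $y$-only quadratic factor appearing in the true Gaussian Radon--Nikodym derivative may be absorbed into $Z(y)$ and so do not affect the form of $\Phi$.

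First I would verify that the prior charges $X$, which follows from Assumption 4 and Lemma 6.27 of the Dashti--Stuart reference exactly as stated just before the theorem. Next, the analogues for the absorbing boundary problem (\ref{3.2boundedFraProFinal}) of Theorems \ref{2.1smallKexistence}, \ref{2.1generalK} and \ref{2.1LipCon} --- which, as the excerpt emphasises, go through by essentially the same and in fact simpler arguments, the nonlocal DtN operator $\mathbb{B}$ being replaced by the purely local boundary operator $ik$ on $\partial D$ --- yield both the uniform bound $\|\mathfrak{S}_{a}(e^{\tilde{q}}-1)u^{\text{inc}}\|_{H^{1}(D)} \leq C$ and a Lipschitz estimate in $e^{\tilde{q}}$. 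Composing with the bounded functionals $\ell_{j}\in V^{*}$ and using the Sobolev embedding $H^{1+s}(D) \hookrightarrow C_{u}(\overline{D})$ (valid for $s>0$ in two dimensions) produces the corresponding boundedness and Lipschitz bounds for $\mathcal{G}_{a}:X\to\mathbb{R}^{J}$.

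With these ingredients in hand, Assumption 3 on $\Phi$ is verified in the same spirit as Subsection 3.3.1. Non-negativity of $\Phi$ gives item (1) trivially; for items (2)--(4) I would expand the square and combine the boundedness and Lipschitz properties of $\mathcal{G}_{a}$ with the linear bound $|L\tilde{q}| \leq C\|\tilde{q}\|_{X}$, using the elementary identity $|a|^{2}-|b|^{2} = \langle a-b,\, a+b\rangle$ together with Cauchy--Schwarz. Positivity of $Z(y)$ is obtained exactly as in the proof of Theorem \ref{3.1mainTheorem}: $\Phi(\cdot;y)$ is bounded on the unit ball $B_{1}\subset X$ by some $M(y)<\infty$, so that $Z(y) \geq e^{-M(y)}\mu_{0}(B_{1}) > 0$. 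An application of Theorem \ref{3.2cunzaiTheorem} then yields the Radon--Nikodym formula and the Hellinger-Lipschitz continuity in $y$.

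The main technical obstacle is the quadratic (rather than linear) growth of $\Phi$ in $\|\tilde{q}\|_{X}$ induced by the $L\tilde{q}$ contribution to $\bar{\nu}_{\tilde{q}}$. This growth is still polynomial and so fits Assumption 3 with $p=2$, but one must check carefully that $L$ really extends to a bounded operator on the ambient space $X=H^{1+s}(D)$ rather than being defined only on the Cameron--Martin space of $\mu_{0}$; this is precisely where the positive regularity parameter $s>0$ is used, through the embedding $E_{\mu_{0}}\hookrightarrow X$ and the continuity of the cross-covariance $\mathcal{C}_{\epsilon \tilde{q}}:X\to\mathbb{R}^{J}$.
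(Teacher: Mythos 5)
Your proposal is correct and follows essentially the same route as the paper's proof: establish $Z(y)>0$ by bounding $\Phi(\cdot;y)$ on the unit ball of $H^{1+s}(D)$ using the uniform $H^{1}(D)$ bound for $\mathfrak{S}_{a}$, verify Assumption 3 through the Lipschitz estimate for $\mathfrak{S}_{a}$ analogous to Theorem \ref{2.1LipCon} combined with the embedding $H^{1+s}(D)\hookrightarrow L^{\infty}(D)$, and then conclude via the general well-posedness result (Theorem \ref{3.2cunzaiTheorem}). Your extra checks --- the trivialization of the Feldman--Hajek conditions since $Y=\mathbb{R}^{J}$ and the boundedness of $\mathcal{C}_{\epsilon\tilde{q}}\mathcal{C}_{\tilde{q}}^{-1}$ on $X$, which controls the quadratic growth of $\Phi$ --- are points the paper's proof leaves implicit, and spelling them out only strengthens the argument.
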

\begin{proof}
To conclude the proof of this theorem, we need to check $Z > 0$ $\mathbb{Q}_{0}$-a.s. and
$\Phi$ defined in (\ref{3.2formofPhi2}) satisfy Assumption 3.
For the former one, notice that
\begin{align*}
|\mathcal{G}_{a}(\tilde{q})| \leq \|\mathfrak{S}_{a}(\tilde{q})\|_{H^{1}(D)} \leq C < \infty,
\end{align*}
where $C$ depend on $\|\tilde{q}\|_{L^{\infty}(D)}$ which could be bounded by $\|\tilde{q}\|_{H^{1+s}(D)}$.
Because $\eta \sim \mathcal{N}(0,\mathcal{C}_{\eta})$, notice that $y$ is $\mathbb{Q}_{0}$-a.s. finite, we have for some $M = M(y) < \infty$
\begin{align*}
\frac{1}{2}|\mathcal{C}_{\nu | \tilde{q}}^{-1/2}(y - \mathcal{G}_{a}(\tilde{q}) - \bar{\nu}_{\tilde{q}})|^{2} < M.
\end{align*}
Hence, by Theorem 6.28 in \cite{Bayesian2}, we know that
\begin{align*}
Z \geq \int_{B_{1}} \exp(-M)\mu_{0}(d\tilde{q}) = \exp(-M)\mu_{0}(B_{1}) > 0.
\end{align*}
To check $\Phi$ defined in (\ref{3.2formofPhi2}) satisfy Assumption 3, we should notice the following fact
\begin{align}\label{3.2lipschitz}
\|\mathfrak{S}_{a}(\tilde{q}_{1}) - \mathfrak{S}_{a}(\tilde{q}_{2})\|_{H^{1}(D)}
\leq C \|\tilde{q}_{1} - \tilde{q}_{2}\|_{L^{\infty}(D)} \leq C \|\tilde{q}_{1} - \tilde{q}_{2}\|_{H^{1+s}(D)},
\end{align}
which can be verified easily by employing similar methods used in the proof of Theorem \ref{2.1LipCon}.
Considering (\ref{3.2lipschitz}), Assumption 3 can be verified by simple calculations.
Hence, the proof is completed.
\end{proof}

\begin{remark}
We provide a simple example, which only incorporates model reduction error induced by the absorbing boundary
condition. Using a similar method, we may incorporate some other kinds of model reduction error (e.g.,
induced by perfectly matched layer).
\end{remark}

Under the above setting, we easily know that the Onsager-Machlup function has the following form:
\begin{align}\label{omfunctionExp}
I(\tilde{q}) = \left\{\begin{aligned}
& |\mathcal{C}_{\nu | \tilde{q}}^{-1/2}(y - \mathcal{G}_{a}(\tilde{q}) - \bar{\nu}_{\tilde{q}})|^{2} + \|\tilde{q}\|_{E}^{2}
\quad \text{if }\tilde{q}\in E, \text{ and}\\
& +\infty \quad\quad\quad\quad\quad\quad\quad\quad\quad\quad\quad\quad\,\,\,\, \text{else,}
\end{aligned}
\right.
\end{align}
with $E = A^{-(s+1)}H^{1+s}(D).$ According to Theorem \ref{MAP1theorem} and Theorem \ref{MAP2theorem},
we can calculate the minimizers of the function $I(\tilde{q})$ to obtain some appropriate estimators.
With this observation, it seems that we could design algorithms by employing ideas used in \cite{Bao3,Bayesian8}.
However, the present work focuses on the theoretical foundations.
For designing practical algorithms, we will report it in our future work.

\section{Conclusion}

In this article, we provide a fractional Helmholtz equation, then formulate two scattering problems: one is related
to the loss-dominated fractional Helmholtz equation, another one is related to the dispersion-dominated fractional Helmholtz equation.
For the former one, a well-posedness theory has been established for general wavenumber $k > 0$ and the Lipschitz continuity
of the solution with respect to the scatterer has also been proved.
For the later one, because the problem seems too complex, we only provide a well-posedness theory for sufficiently small wavenumber.
For the general wavenumber, the problem needs further investigations and it may be related to the studies on
regularity properties about fractional elliptic systems.

In order to study an inverse scattering problem related to the loss-dominated fractional Helmholtz equation,
we generalize the traditional infinite-dimensional Bayesian method to
the infinite-dimensional Bayesian model error method, which allows a part of the noise to depend on the
target function (the function needs to be estimated). A result similar to the posterior consistency has been obtained,
and the relationship between the Bayesian methods and the regularization methods has also been discussed.
In the end, general theory has been applied to our inverse scattering problem.

There are numerous further problems, e.g., designing an algorithm for inverse problems with this new model;
generalizing our theory under the variable Besov prior proposed in a recent article \cite{Jia1}.

\section{Acknowledgements}

The authors would like to thank the anonymous
referees for their comments and suggestions, which helped to improve the paper significantly.
This work was supported in part by NSFC under Contact 41390454, Contact 11501439 and Contact 11131006.
in part by postdoctoral science foundation project of China under Contact 2015M580826,
in part by EU FP7 project LIVCODE (295151).
The first author would like to thank Dr. Chenchen Mou and Jiaqing Yang's helpful discussions.

\bibliographystyle{plain}
\bibliography{references}

\end{document}